\documentclass[10pt,reqno]{amsart}
\pdfoutput=1

\usepackage{subfiles}

\usepackage{amsfonts,amsthm,amssymb,amsmath,amscd,euscript}
\usepackage{nccmath}
\usepackage{nicematrix}
\usepackage{mathdots}
\usepackage{csquotes}
\usepackage{mathrsfs}
\usepackage{bbm}
\usepackage{ifthen}
\usepackage{xpatch}
\usepackage{thmtools}
\usepackage{thm-restate}
\usepackage{calc}
\usepackage[reftex]{theoremref}

\usepackage[backend=biber,style=alphabetic,giveninits=true]{biblatex}
\usepackage[english]{babel}
\usepackage[hypertexnames=false]{hyperref}

\usepackage{multicol}
\usepackage[dvipsnames,rgb]{xcolor}
\usepackage{framed}
\usepackage{pdfpages}
\usepackage{scalerel}
\usepackage{fullpage}
\usepackage{color}
\usepackage{lipsum}
\usepackage{bold-extra}
\usepackage{fancyhdr}

\usepackage{tikz,tikz-3dplot}
\usepackage{tkz-berge}

\usepackage{pgfplots}
\pgfplotsset{compat=1.18}
\usetikzlibrary{shapes, arrows}
\usetikzlibrary{shapes.geometric}
\usetikzlibrary{graphs}
\usetikzlibrary{graphs.standard}
\usetikzlibrary{decorations.pathmorphing}
\usetikzlibrary{decorations.markings}
\tikzset{->-/.style={decoration={
  markings,
  mark=at position #1 with {\arrow{>}}},postaction={decorate}},
  ->-/.default=0.53
}
\tikzset{-<-/.style={decoration={
  markings,
  mark=at position #1 with {\arrow{<}}},postaction={decorate}},
  -<-/.default=0.53
}
\pgfplotsset{soldot/.style={color=blue,only marks,mark=*}} \pgfplotsset{holdot/.style={color=blue,fill=white,only marks,mark=*}}
\usepackage{tikz-cd}
\usepackage[all,2cell,cmtip]{xy}
\UseTwocells

\usepackage{mathtools}
\usepackage{float}
\usepackage[margin=1in,heightrounded]{geometry}
\usepackage{etoolbox}
\usepackage[most]{tcolorbox}
\tcbuselibrary{theorems}
\tcbuselibrary{skins}
\usepackage{mdframed}
\usepackage[inline, shortlabels]{enumitem}
\SetEnumitemKey{twocol}{itemsep = 1\itemsep,parsep  = 1\parsep,before  = \raggedcolumns\begin{multicols}{2},after   = \end{multicols}}
\usepackage{tasks}
\usepackage{stackengine}
\usepackage{tabularx}
\usepackage{soul}
\usepackage[a]{esvect}
\usepackage[activate={true,nocompatibility},nopatch=footnote,final,tracking=true,kerning=true,spacing=true,factor=1100,stretch=10,shrink=10]{microtype}
\microtypecontext{spacing=nonfrench}
\usepackage{siunitx}
\usepackage{mathrsfs}
\usepackage{diagbox}
\DeclareFontFamily{U}{mathx}{\hyphenchar\font45}
\DeclareFontShape{U}{mathx}{m}{n}{
      <5> <6> <7> <8> <9> <10>
      <10.95> <12> <14.4> <17.28> <20.74> <24.88>
      mathx10
      }{}
\DeclareSymbolFont{mathx}{U}{mathx}{m}{n}
\DeclareFontSubstitution{U}{mathx}{m}{n}
\DeclareMathAccent{\widecheck}{0}{mathx}{"71}
\DeclareMathAccent{\wideparen}{0}{mathx}{"75}

\usepackage{setspace}
\usepackage{stmaryrd}
\usepackage{centernot}
\usepackage{subfig}
\usepackage{cancel}
\newcommand\Ccancel[2][black]{
    \let\OldcancelColor\CancelColor
    \renewcommand\CancelColor{\color{#1}}
    \cancel{#2}
    \renewcommand\CancelColor{\OldcancelColor}
}
\usepackage{makecell}
\hypersetup{colorlinks=true,citecolor=blue,urlcolor =blue,linkbordercolor={1 0 0}}

\definecolor{bluepigment}{rgb}{0.2, 0.2, 0.6}
\definecolor{cobalt}{rgb}{0.0, 0.28, 0.67}
\definecolor{darkpowderblue}{rgb}{0.0, 0.2, 0.6}
\definecolor{egyptianblue}{rgb}{0.06, 0.2, 0.65}
\definecolor{lapislazuli}{rgb}{0.15, 0.38, 0.61}
\definecolor{tealblue}{rgb}{0.21, 0.46, 0.53}
\definecolor{darktealblue}{rgb}{0.11, 0.36, 0.43}
\definecolor{zaffre}{rgb}{0.0, 0.08, 0.66}
\definecolor{zaffreprime}{rgb}{0.4, 0.18, 0.76}
\definecolor{arsenic}{rgb}{0.23, 0.27, 0.29}
\definecolor{bulgarianrose}{rgb}{0.28, 0.02, 0.03}
\definecolor{aurometalsaurus}{rgb}{0.43, 0.5, 0.5}
\definecolor{amethyst}{rgb}{0.6, 0.4, 0.8}

\definecolor{pblue}{rgb}{.114, .102, .365}
\definecolor{pblue2}{rgb}{.204, .196, .471}
\definecolor{pblue3}{rgb}{.655,.784,.855}
\definecolor{pgreen}{rgb}{.075,.212,.337}
\definecolor{pgreen3}{rgb}{.667,.882,.765}

\definecolor{codegreen}{rgb}{0,0.6,0}
\definecolor{codegray}{rgb}{0.5,0.5,0.5}
\definecolor{codepurple}{rgb}{0.58,0,0.82}
\definecolor{backcolour}{rgb}{0.95,0.95,0.92}

\allowdisplaybreaks[1]

\makeatletter
\patchcmd{\@maketitle}%
  {\ifx\@empty\@dedicatory}%
  {\ifx\@empty\@date \else {\vskip3ex \centering\footnotesize\@date\par\vskip1ex}\fi%
   \ifx\@empty\@dedicatory}%
  {}{}%
\patchcmd{\@adminfootnotes}%
  {\ifx\@empty\@date\else \@footnotetext{\@setdate}\fi}%
  {}{}{}%

\def\section{\@startsection{section}{1}%
  \z@{.7\linespacing\@plus\linespacing}{.5\linespacing}%
  {\normalfont\Large\bfseries}}
\def\subsection{\@startsection{subsection}{2}%
  \z@{.5\linespacing\@plus.7\linespacing}{.5\linespacing}%
  {\normalfont\large\bfseries}}%
\def\subsubsection{\@startsection{subsubsection}{3}%
  \z@{.5\linespacing\@plus.7\linespacing}{.5\linespacing}%
  {\normalfont\bfseries}}%
\def\@tocline#1#2#3#4#5#6#7{\relax%
  \ifnum #1>\c@tocdepth 
  \else%
    \par \addpenalty\@secpenalty\addvspace{#2}%
    \begingroup \hyphenpenalty\@M%
    \@ifempty{#4}{%
      \@tempdima\csname r@tocindent\number#1\endcsname\relax%
    }{%
      \@tempdima#4\relax%
    }%
    \parindent\z@ \leftskip#3\relax \advance\leftskip\@tempdima\relax%
    \rightskip\@pnumwidth plus4em \parfillskip-\@pnumwidth%
    #5\leavevmode\hskip-\@tempdima%
      \ifcase #1%
       \or\or \hskip 1em \or \hskip 2em \else \hskip 3em \fi%
      #6\nobreak\relax%
    \dotfill\hbox to\@pnumwidth{\@tocpagenum{#7}}\par%
    \nobreak%
    \endgroup%
  \fi}%
\makeatother%

\makeatletter%
\xpatchcmd{\endmdframed}%
    {\aftergroup\endmdf@trivlist\color@endgroup}%
    {\endmdf@trivlist\color@endgroup\@doendpe}%
    {}{}%
\makeatother%

\newenvironment{solution} {\begin{mdframed}[skipabove=0.1in,skipbelow=0.1in]\begin{proof}[Solution]} {\end{proof}\end{mdframed}}

\mdfdefinestyle{theoremstyle}{fontcolor=amethyst, linecolor=amethyst, bottomline=false,topline=false,rightline=false,skipabove=2pt,skipbelow=2pt}

\declaretheoremstyle[
spaceabove={5pt},   
spacebelow={5pt},   
bodyfont=\itshape,  
headfont=\bfseries, 
headpunct={.},         
postheadspace={.5em}      
]{theorem}
\theoremstyle{theorem}
\declaretheorem[numberwithin=section, style=theorem, name=Theorem]{theorem}
\declaretheorem[numberwithin=section, style=theorem, name=Conjecture,sibling=theorem]{conjecture}
\declaretheorem[numberwithin=section, style=theorem, name=Question,sibling=theorem]{question}
\declaretheorem[ style=theorem, name=Lemma, sibling=theorem]{lemma}

\declaretheorem[sibling=theorem, style=theorem, name=Proposition]{proposition}
\declaretheorem[numberwithin=section, style=theorem, name=Definition]{defn}

\declaretheorem[name=Remark,sibling=theorem,style=definition]{remark}

\declaretheorem[sibling=theorem, style=theorem, name=Example]{example}

\declaretheorem[sibling={example}, style=theorem, name=Exercise]{exercise}

\declaretheorem[sibling={example}, style=theorem, name=Problem]{prob}

\declaretheorem[numbered=no, style=theorem, name=Problem]{prob-nonum}

\lstdefinestyle{mystyle}{
    keywordstyle=\color[rgb]{0,0,1},
    commentstyle=\color[rgb]{0.026,0.412,0.595},
    stringstyle=\color[rgb]{0.627,0.126,0.941},
    numberstyle=\color[rgb]{0.205, 0.142, 0.73},
    frame=single,
    breakatwhitespace=false,
    breaklines=true,
    postbreak=\mbox{\textcolor{darkgray}{$\hookrightarrow$}\space},
    captionpos=b,
    keepspaces=true,
    numbers=left,
    numbersep=5pt,
    showspaces=false,
    showstringspaces=false,
    showtabs=false,
    tabsize=2,
    basicstyle=\ttfamily,
    columns=fullflexible,
    keepspaces=true,
    language=C++
}

\DeclareRobustCommand{\SkipTocEntry}[3]{}

\usepackage{cleveref}

\crefname{defn}{definition}{definitions}
\Crefname{defn}{Definition}{Definitions}
\crefname{theorem}{theorem}{theorems}
\Crefname{theorem}{Theorem}{Theorems}
\crefname{prob}{problem}{problems}
\Crefname{prob}{Problem}{Problems}
\crefname{exercise}{exercise}{exercises}
\Crefname{exercise}{Exercise}{Exercises}
\crefname{proposition}{proposition}{propositions}
\Crefname{proposition}{Proposition}{Propositions}
\crefname{question}{question}{questions}
\Crefname{question}{Question}{Questions}

\makeatletter
\newcommand{\leqnomode}{\tagsleft@true\let\veqno\@@leqno}
\makeatother

\newcommand{\ZZ}{\mathbb{Z}}
\newcommand{\R}{\mathbb{R}}
\newcommand{\Q}{\mathbb{Q}}
\newcommand{\N}{\mathbb{N}}
\newcommand{\CC}{\mathbb{C}}

\newcommand{\abs}[1]{\left\lvert #1 \right\rvert}

\newcommand{\0}{\emptyset}

\renewcommand*{\st}{\;\vert\;}

\DeclareMathOperator{\Id}{Id}

\DeclareMathOperator{\im}{im}

\DeclareMathOperator{\Hom}{Hom}

\newcommand{\rest}[2]{{
  \left.\kern-\nulldelimiterspace 
  #1 
  \vphantom{\big|} 
  \right|_{#2} 
  }}

\newcommand{\floor}[1]{\left\lfloor#1\right\rfloor}

\DeclareMathOperator{\GL}{GL}
\DeclareMathOperator{\SL}{SL}
\DeclareMathOperator{\PSL}{PSL}
\DeclareMathOperator{\Sp}{Sp}
\DeclareMathOperator{\tr}{tr}

\newcommand{\e}{\varepsilon}
\newcommand{\de}{\delta}

\DeclareMathOperator{\Mod}{Mod}
\DeclareMathOperator{\PMod}{PMod}

\DeclareMathOperator{\Stab}{Stab}

\DeclareMathOperator{\Diff}{Diff}

\newcommand*{\PP}{\mathbb{P}}

\newcounter{proofstep}
\newcounter{proofsubstep}[proofstep]
\newcommand{\step}[1]{%
  \refstepcounter{proofstep}
  \par\medskip\noindent
  \textbf{Step \theproofstep: #1.} \enspace
}

\usepackage{etoolbox}
\AtBeginEnvironment{proof}{\setcounter{proofstep}{0}}
\usepackage{forest}

\usepackage{subfig}
\usepackage{bm}

\DeclareMathOperator{\Bl}{B\ell}
\DeclareMathOperator{\Br}{Br}
\DeclareMathOperator{\Perm}{Perm}
\DeclareMathOperator{\PSp}{PSp}
\DeclareMathOperator{\PGL}{PGL}

\DeclareMathOperator{\Conf}{Conf}

\newcommand{\para}[1]{\paragraph{\color{Black}\textbf{#1}}}

\usepackage[mode=image]{standalone}

\DeclareMathOperator{\Core}{Core}

\graphicspath{ {resources} }

\newcommand{\VARTitle}{Universal braids for elliptic fibrations}
\newcommand{\VARSubtitle}{from character varieties to Coxeter's factor groups} 
\newcommand{\VARAuthor}{Faye Jackson}
\author{\vspace{-0.25cm}\VARAuthor}

\address{Department of Mathematics, University of Chicago, 5734 S. University Ave, Chicago, IL 60637}
\email{alephnil@uchicago.edu}
\thanks{This material is based upon work supported by the National Science Foundation Graduate Research Fellowship Program under Grant No. 2140001. Any opinions, findings, and conclusions or recommendations expressed in this material are those of the author and do not necessarily reflect the views of the National Science Foundation.}

\title{\VARTitle: \\ \VARSubtitle} 
\date{}
\definecolor{darkpastelpurple}{rgb}{0.59, 0.44, 0.84}
\definecolor{mauve}{rgb}{0.88, 0.69, 1.0}

\begin{document}

\hypersetup{linkcolor=lapislazuli}

\begin{abstract} 
	Let $\pi : M \to B$ be an elliptic fibration over $B = D^2$ or $B = S^2$ with $n$ nodal fibers over $\Delta \subseteq B$. We study the \textit{universal liftable braids} for $\pi$: those braids that admit a fiber-preserving lift to $M$ for all choices of coordinates on $(B,\Delta)$. When $B = S^2$, we show that nontrivial universal braids do not exist by proving a Zariski-density theorem on the $\SL_2$-character variety for $(S^2,\Delta)$. When $B = D^2$ we classify when the subgroup of universal braids has finite index in the braid group $B_n = \Mod(D^2,\Delta)$, and relate these examples to Coxeter's factor groups of braid groups, which in turn are related to the platonic solids. Finally, we generalize the results derived in the finite-index cases by considering a canonical family of branched covers of the base $B$ associated to any elliptic fibration. The generalization naturally connects the universal braids to the integral Burau representation reduced modulo 3.
\end{abstract}

\thispagestyle{empty}
\maketitle
\setcounter{tocdepth}{2}
\microtypesetup{protrusion=false}
\microtypesetup{protrusion=true}

\lhead{\VARAuthor}
\rhead{\VARTitle}
\chead{}
\setstretch{1.2}

\tableofcontents

\section{Introduction}\label{sec:intro}

Let $\pi : M \to B$ be a relatively minimal smooth elliptic fibration over $B = S^2$ or $B = D^2$ with $n$ nodal fibers located over $\Delta \subseteq B$. Moishezon, utilizing algebraic input from Livne, classified such fibrations up to smooth isomorphism for $B = S^2$ and showed that they are all pullbacks of the rational elliptic surface $\Bl_{\{p_1,\ldots,p_9\}} \PP^2 \to \PP^1$ along $z \mapsto z^d$ for some $d \geq 1$ \cite[Theorem 9]{moishezon}. Associated to $\pi$ is its smooth automorphism group $\Mod(\pi)$ consisting of fiber-preserving diffeomorphisms of $M$ up to fiber-preserving isotopy as well as the subgroup $\Br(\pi)$ of braids\footnote{When $B = S^2$, the mapping class group $\Mod(B,\Delta)$ differs slightly from the spherical braid group $\pi_1(\Conf_n(S^2))$. One must quotient $\pi_1(\Conf_n(S^2))$ by its center, which is a $\ZZ/2\ZZ$-subgroup \cite[Section 9.1]{primer}.} admitting a lift to a fiber-preserving diffeomorphism of the 4-manifold $M$ that restricts to the identity on $\partial M$. In \cite{how-large,torsion-braid-mon}, we studied the index $[\Mod(B,\Delta) : \Br(\pi)]$ of liftable braids as well as the finite order liftable braids up to conjugacy in $\Br(\pi)$ when $B = S^2$. Notably, $\Br(\pi)$ is not a normal subgroup of $\Mod(B,\Delta)$. One can interpret conjugation in this setting as a change of coordinates in the base $B$, and under this interpretation $\Br(\pi)$ is not invariant under change of coordinates on $(B,\Delta)$. In this paper, we study the maximal coordinate-invariant subgroup of $\Br(\pi)$: the \textit{normal core} of $\Br(\pi)$ in the mapping class group $\Mod(B,\Delta) \coloneqq \pi_0(\Diff^+(B,\Delta))$ defined by
\begin{align*}
	\Core(\Br(\pi)) \coloneqq  \bigcap_{g \in \Mod(B,\Delta)} g\Br(\pi)g^{-1}.
\end{align*}
We refer to elements of $\Core(\Br(\pi))$ as \textit{universal braids} associated to $\pi$, since these braids are liftable regardless of the choice of coordinates on $(B,\Delta)$. The group $\Core(\Br(\pi))$ differs substantially depending on whether the base $B$ is a sphere or a disk. In particular,
\begin{enumerate}
	\item[1.] Over $B = S^2$, there are no universal braids (see \Cref{thm:normal-core-triv}), i.e., $\Core(\Br(\pi)) = 1$.
	\item[2a.] Over $B = D^2$, we classify when $\Br(\pi)$---and hence $\Core(\Br(\pi))$---has finite index in the braid group $B_n \coloneqq \Mod(D^2,\Delta)$. We say that these fibrations have \textit{virtually universal} braid groups.
	\item[2b.] Furthermore when $B = D^2$, in the nontrivial finite-index cases, the group of universal braids has a concrete description: 
		\[
			\Core(\Br(\pi)) = \langle \langle \sigma_1^3 \rangle  \rangle,
		\]
		where $\sigma_1$ is a half-twist and $\langle \langle \sigma_1^3 \rangle  \rangle$ is the normal subgroup generated by $\sigma_1^3$. This calculation relates $\Core(\Br(\pi))$ to the three-dimensional platonic solids via Coxeter's factor groups of braid groups (see \Cref{rem:coxeter-platonic}).
\end{enumerate}
To generalize the special cases described in (2b) to genus one fibrations over the disk with arbitrary configurations of nodal fibers, we introduce a canonical way of associating a branched cover of the base Riemann surface to a genus $g$ Lefschetz fibration, thereby reducing the dimensionality of the problem. In the genus one case, this construction produces a branched cover $\pi[N] : \Sigma_\pi[N] \to B$ of Riemann surfaces for each $N \geq 1$. Furthermore, when $N = 2$, the subgroup of liftable braids associated to this branched cover is closely related to the integral Burau representation reduced modulo 3. For precise definitions and theorem statements, see \Cref{thm:symp-iso-intro} and the discussion preceding it.

\vspace{0.2cm}
\para{Universal Braids over the sphere}

Our first result is when the base space $B$ is the sphere $S^2$. Throughout this paper, when we refer to a smooth genus one fibration, we assume that all of the fibers are nodal (i.e., all singularities are simple).
\begin{theorem}[Universal braids over $S^2$ are trivial]\label{thm:normal-core-triv}
	Let $\pi : M \to S^2$ be a nontrivial smooth genus one Lefschetz fibration. Then $\Core(\Br(\pi))$ is trivial. In other words, there are no universal braids over $S^2$.
\end{theorem}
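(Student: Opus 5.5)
The plan is to translate liftability into the language of monodromy representations and character varieties, then exploit a Zariski-density statement to rule out any nontrivial braid that fixes every point of the relevant orbit.

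\emph{Liftability as an equivalence of monodromies.} The monodromy of $\pi$ gives a homomorphism $\rho_\pi : \pi_1(S^2 \setminus \Delta, *) \to \SL_2(\ZZ)$, sending each standard loop around a point of $\Delta$ to a conjugate of the Dehn twist matrix $T = \begin{pmatrix} 1 & 1 \\ 0 & 1 \end{pmatrix}$, with total product equal to the identity. By the standard correspondence (Moishezon, and the earlier work \cite{how-large}), a braid $\beta \in \Mod(S^2,\Delta)$ lies in $\Br(\pi)$ exactly when $\rho_\pi \circ \beta_*$ is conjugate to $\rho_\pi$ in $\SL_2(\ZZ)$, possibly up to the finite ambiguity coming from the center $\pm I$ and base-point choices. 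Conjugating by $g$ replaces $\rho_\pi$ with $\rho_\pi \circ g_*$. Hence $\beta \in \Core(\Br(\pi))$ means that $\beta$ fixes the class of every representation in the $\Mod(S^2,\Delta)$-orbit $\mathcal{O}$ of $[\rho_\pi]$ inside the $\SL_2$-character variety $X(S^2,\Delta) = \Hom(\pi_1(S^2\setminus\Delta),\SL_2(\CC))/\!/\SL_2(\CC)$. One point needs care here: conjugacy in $\SL_2(\ZZ)$ implies equality of characters, so this passage to characters is legitimate.

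\emph{Zariski density.} The main step is to show that $\mathcal{O}$ is Zariski dense in the relative character variety $X_\tau$, the component in which all boundary traces equal $2$. I would argue in three moves:
\begin{enumerate}
\item Write $X_\tau$ in trace coordinates $\tr(\rho(x_ix_j))$, etc. The orbit closure $\overline{\mathcal O}$ is a $\Mod$-invariant subvariety.
\item The Dehn twists about curves enclosing two punctures act on the fibers of the trace function $\tr\rho(x_ix_j)$ by rotations, or unipotently in the parabolic case. Whenever the trace value corresponds to an infinite-order element, Zariski-closed invariant sets must contain entire fibers, in the style of Goldman's ergodicity arguments and the work of Cantat--Loray and Previte--Xia.
\item The orbit is infinite, since \cite{how-large} shows $\Br(\pi)$ has infinite index. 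Using this, bootstrap over a pants decomposition to conclude that $\overline{\mathcal O}$ contains an irreducible component of top dimension.
\end{enumerate}
I expect this density step to be the main obstacle. I would first treat the case $n$ minimal (Moishezon forces $n = 12d$) and induct using the fibration structure $X_\tau \to \CC$ given by a separating trace. One must also verify that the relevant component is irreducible, or at least that the orbit meets each component it needs to.

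\emph{Conclusion.} Once density holds, each $\beta \in \Core(\Br(\pi))$ acts on $X_\tau$ as a polynomial automorphism fixing a Zariski-dense set pointwise, so $\beta$ acts trivially on all of $X_\tau$. It then remains to show that the kernel of $\Mod(S^2,\Delta) \to \mathrm{Aut}(X_\tau)$ is trivial, or at least meets $\Core(\Br(\pi))$ trivially. For this I would use that the action is faithful on the full relative character variety for $n \ge 5$, detected via trace functions of simple closed curves: a mapping class fixing all trace functions fixes all curve classes, hence is central, hence trivial in $\Mod(S^2,\Delta)$. Since $n \geq 12$, the small exceptional cases, where hyperelliptic-type kernels appear, do not arise.
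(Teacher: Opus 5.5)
Your overall architecture matches the paper's. Since $\Br(\pi)=\Stab[\phi_\pi]$, the group $\Core(\Br(\pi))$ fixes the whole Hurwitz orbit; density plus algebraicity of the action makes it act trivially on $\mathfrak{X}_{\vec{k},\CC}(S^2\setminus\Delta)$ for $\vec{k}=(2,\ldots,2)$; faithfulness then finishes. The genuine gap is the density step. It carries essentially all of the content, and you leave it as a program.

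Your item 3 does not work. Knowing that the orbit is infinite only tells you its closure has positive dimension, and nothing in your outline gets you from there to a top-dimensional component. Item 2 is also overstated. A single twist $T_c$ only moves a point along an orbit of the complexified twist flow of $c$, so its orbit closure is at most one-dimensional, not a fiber of $\tr_c$. What you actually need is a single point of the orbit together with a whole pants decomposition whose cuffs have eigenvalues that are not roots of unity (for integral points, $\abs{\tr}>2$) and whose pairs of pants carry irreducible representations. These conditions fail at the obvious points. For the standard factorization $(T_\alpha T_\beta)^{6d}$, every curve enclosing a block of consecutive punctures has trace in $\{0,\pm1,\pm2\}$; for example, the first $2,\ldots,6$ punctures give $1,0,-1,-2,-2$. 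So nothing can be concluded there.

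Two ingredients are missing.
\begin{enumerate}
\item[(i)] A criterion converting these conditions at one point into density. The paper quotes Coccia--Litt (\Cref{prop:coccia-litt}): if all cuff traces lie outside $E$ and $x^2+y^2+z^2-xyz\neq 4$ on every pair of pants, then the $\PMod$-orbit is Zariski dense in the relative character variety.
\item[(ii)] A point of the orbit that actually satisfies these conditions. The key idea here is Moishezon's realization theorem: \emph{every} factorization of $\Id$ into $12d$ conjugates of the standard shear is a monodromy factorization of $\pi$ for suitable curves $\gamma_i$. This lets you choose an explicit, computer-found factorization \eqref{eq:special-mon} and a tree-shaped pants decomposition. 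You then check by direct computation that all cuff traces have absolute value $>2$. You also check that on each pair of pants, which always contains a puncture of trace $2$, the two remaining boundary traces differ; with one trace equal to $2$ the Markov-type condition reduces to $x\neq y$.
\end{enumerate}

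Separately, your faithfulness step asserts that a mapping class preserving every trace function on the \emph{relative} variety preserves every curve. That is not automatic on this locus. The paper gets it by equivariantly embedding $\mathcal{T}_{0,n}$ into the $(-2,\ldots,-2)$ relative character variety (\Cref{lemma:embed,lemma:one-hyper-lift}). It then identifies that variety with the $(2,\ldots,2)$ one by negating the generators, which uses that $n=12d$ is even. With that justification, proving faithfulness for all of $\Mod(S^2,\Delta)$ at once is a legitimate substitute for the paper's combination of $\PMod$-faithfulness with the absence of finite normal subgroups.
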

\noindent To prove \Cref{thm:normal-core-triv}, we analyze a particular mapping class group orbit on the $\SL_2$-character variety
\begin{align*}
	\mathfrak{X}_\CC(S^2 \setminus \Delta) &\coloneqq \Hom(\pi_1(S^2 \setminus \Delta), \SL_2\CC)\sslash \SL_2\CC,
\end{align*}
parameterizing representations of $\pi_1(S^2 \setminus \Delta)$ into $\SL_2\CC$. The mapping class group $\Mod(S^2,\Delta)$ acts on $\pi_1(S^2 \setminus \Delta)$ through outer automorphisms, inducing an action on $\mathfrak{X}_\CC(S^2 \setminus \Delta)$ referred to as the \textit{Hurwitz action}. Two representations $\rho_1,\rho_2 : \pi_1(S^2 \setminus \Delta) \to \SL_2\CC$ are called \textit{Hurwitz equivalent} if they are related by the Hurwitz action and conjugacy in $\SL_2\CC$. Fix an ordering $p_1,\ldots,p_n$ of the points $p_i \in \Delta$. The geometry of $\mathfrak{X}_\CC(S^2 \setminus \Delta)$ can be probed by the map
\begin{align*}
	\tr_\Delta : \mathfrak{X}_\CC(S^2 \setminus \Delta) &\to \mathbb{A}^n \\ 
	[\rho] &\mapsto (\tr(\rho(\gamma_i)))_{i=1}^n,
\end{align*}
where $n = \abs{\Delta}$ is the number of singular points, $\gamma_i$ is some simple loop enclosing the puncture $p_i$, and $[\rho]$ is the conjugacy class of some representation $\rho$. The \textit{relative character varieties}
\begin{align*}
	\mathfrak{X}_{\vec{k},\CC}(S^2 \setminus \Delta) &\coloneqq \tr_\Delta^{-1}(\vec{k})
\end{align*}
for $\vec{k} \in \mathbb{A}^n$ are preserved by the Hurwitz action of the pure mapping class group $\PMod(S^2,\Delta)$. In order to prove \Cref{thm:normal-core-triv}, we first prove the following density statement.
\begin{theorem}[Zariski density of Hurwitz orbits]\label{thm:z-dense}
	Let $\pi : M \to S^2$ be a nontrivial smooth genus one Lefschetz fibration with monodromy representation $\phi_\pi : \pi_1(S^2 \setminus \Delta) \to \SL_2\ZZ$. Then, with $\vec{k} = (2,\ldots,2)$, the orbit $\Mod(S^2,\Delta) \cdot [\phi_\pi]$ is Zariski dense in $\mathfrak{X}_{\vec{k},\CC}(S^2 \setminus \Delta)$.
\end{theorem}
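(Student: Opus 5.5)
We argue by an irreducibility-plus-invariance argument inside the relative character variety. Let $V = \mathfrak{X}_{\vec{k},\CC}(S^2 \setminus \Delta)$ with $\vec k = (2,\ldots,2)$, and let $Z$ be the Zariski closure of the orbit $\Mod(S^2,\Delta)\cdot[\phi_\pi]$. The goal is $Z = V$.

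The first step is to identify what has to be covered. The monodromy of each $\gamma_i$ is a Dehn twist, hence parabolic. Representations with all $\tr\rho(\gamma_i)=2$ and $\rho(\gamma_i)\neq I$ form an open subset; for $n$ punctures on the sphere this component has complex dimension $2(n-3)$. We must check whether $V$ has extra components: the loci where some $\rho(\gamma_i)=I$, or where $\rho$ is reducible. Our plan is to show that these lie in the closure of the main component of irreducible, non-degenerate representations, or at least that the main component $V^\circ$ is irreducible. We then prove $Z \supseteq V^\circ$ and conclude $Z=V$.

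For irreducibility of $V^\circ$, we would use explicit coordinates. Conjugate so that consecutive parabolics take a normal form, and use trace coordinates $x_{ij} = \tr(\rho(\gamma_i\gamma_j))$ for adjacent pairs together with the pants-decomposition (Fenchel--Nielsen type) traces $t_j = \tr(\rho(\gamma_1\cdots\gamma_j))$. With these, $V^\circ$ becomes an open piece of a fibration over the $t$-coordinates whose fibers are conics. This gives irreducibility and the dimension count.

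The density argument proceeds as follows.

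1. Since $Z$ is invariant under the Hurwitz action, so is each of its irreducible components, up to a finite permutation. After passing to a finite-index subgroup of $\PMod$, we may assume every component is invariant.

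2. Dehn twists about curves $c$ enclosing two punctures $p_i, p_{i+1}$ act on $V$ as "twist flows". Concretely, a twist rotates the fiber over a fixed value $\tr\rho(c)=t$; when $t\neq\pm2$ is not a root-of-unity trace, this is multiplication by an eigenvalue of infinite order on a $\CC^*$ fiber. The Zariski closure of the iterates of a point under such a twist therefore contains the whole one-dimensional fiber through that point.

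3. For density it thus suffices that the orbit of $[\phi_\pi]$ contains a point at which, for a pants decomposition, all the traces $t_j$ are non-exceptional. We then induct: the closure $Z$ is saturated by the twist fibers along every curve of a pants decomposition. Using several different pants decompositions, such as the adjacent pairs $\{p_i,p_{i+1}\}$ and shifted ones, saturation in $2(n-3)$ independent directions forces $\dim Z = \dim V^\circ$. Irreducibility then gives $Z\supseteq V^\circ$.

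The main obstacle is the arithmetic input in the last step. Since $\phi_\pi$ takes values in $\SL_2\ZZ$, the orbit consists of integral points, so traces are integers. Twists about curves with $\tr\rho(c)\in\{-2,\ldots,2\}$ act with finite order or unipotently, giving only finite orbits or lines. We must therefore use nontriviality of $\pi$ to find, after Hurwitz moves, curves with $\abs{\tr\rho(c)}>2$. Here we would invoke Moishezon's normal form: the monodromy is Hurwitz equivalent to $(a b)^{6d}$ with $a,b$ the standard generators. Products such as $ab^{-1}$ are hyperbolic, with trace $3$, so hyperbolic separating curves exist. Along an integral hyperbolic trace $t$ the twist has infinite order, and the relevant eigenvalue is not a root of unity. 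For parabolic curves the twist orbit still fills a line, which we can also use.

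The remaining technical point is to ensure that the saturated directions are transverse. We would check this via the Goldman Poisson bracket: the Hamiltonian flows of $\tr\rho(c)$ for a maximal family of curves, together with curves crossing them, span the tangent space at a generic point. We would verify this at a single explicit point in the orbit, namely a Hurwitz move of $(ab)^{6d}$.
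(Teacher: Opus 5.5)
Your outline is in effect an attempt to reprove from scratch the density criterion that the paper imports from Coccia--Litt (\Cref{prop:coccia-litt}). The mechanism you describe is the right one: a Dehn twist about a curve whose trace lies outside $E$ has Zariski-dense orbit in its twist-flow curve, and enough such flows force the orbit closure to be full-dimensional.

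The genuine gap is that you never exhibit a point of the Hurwitz orbit at which all the curves you need have non-exceptional trace, and this is the entire content of the paper's proof. Your proposed source of hyperbolicity does not work. The element $\gamma_i\gamma_j^{-1}$ is not represented by a simple closed curve, since its homology class is not $\pm$ a sum of distinct puncture classes. So $\tr(ab^{-1})=3$ says nothing about any twist flow. Worse, in the normal form $(T_aT_b)^{6d}$ every curve enclosing a block of consecutive punctures has monodromy conjugate to $(ab)^k$ or $(ab)^ka$ (conjugation by $aba$ swaps $a$ and $b$). These have traces in $\{2,1,0,-1,-2\}\subseteq E$, so every pants decomposition adapted to that generating system is entirely exceptional. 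One has to move far into the Hurwitz orbit to find a usable point.

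Since the theorem covers all $d\ge 1$, ``verify at a single explicit point'' is really infinitely many verifications unless the construction is uniform in $d$. Your own saturation argument also needs more than step 3 states. One pants decomposition gives only $n-3$ commuting flows, so you would need non-exceptional traces on crossing curves at the same point, or the additional argument inside Coccia--Litt. Finally, the passage $Z\supseteq V^\circ \Rightarrow Z=V$ requires irreducibility of the whole relative character variety, which you leave open. The ``fibration by conics'' is literally true only for $n=4$ and says nothing about degenerate fibres or the loci where some $\rho(\gamma_i)=\Id$.

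The paper closes the gap as follows. By Moishezon's theorem, \emph{any} factorization $A_1\cdots A_{12d}=\Id$ into conjugates of the standard shear is realized as $(\phi_\pi(\gamma_i))_i$ for some standard generating system. So no search relative to $(ab)^{6d}$ is needed; one simply writes down $(BBCADBAAABAB)(EA(AB)^2EA(AB)^2)^{d-1}$. Because $EA(AB)^2=-\Id$, the cuff monodromies along the spine of the tree-shaped pants decomposition repeat up to sign with period $6$. A finite computation therefore gives every cuff trace for every $d$: they are $-47,-42,-37,-14,\ldots,\pm 63$, all of absolute value $>2$. The second Coccia--Litt condition, which your sketch replaces by an unverified transversality claim, is immediate. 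Every pair of pants contains a puncture of trace $2$, so $x^2+y^2+z^2-xyz=4$ reduces to $x=y$ for the two remaining boundary traces, and these are checked to differ. The cited criterion then yields density in all of $\mathfrak{X}_{\vec k,\CC}(S^2\setminus\Delta)$. This settles the missing transverse directions and the irreducibility question at once.
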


\begin{remark}
	Throughout this paper, all smooth genus one fibrations will be assumed to have nodal fibers and to be relatively minimal. Furthermore the monodromy representation $\phi_\pi$ is defined as a \textit{homomorphism} which sends counterclockwise loops about singular fibers to \textit{left-handed} Dehn twists. This convention stands in contrast to the literature on Lefschetz fibrations, but agrees with the conventions in the rest of geometric topology. It can be recovered from the usual conventions by an inversion (see \cite[p. 291]{gompf-stipsicz}).
\end{remark}

\noindent To prove \Cref{thm:z-dense} we verify a condition developed by Coccia--Litt \cite[Proposition 3.1.10]{coccia-litt}. For the reader's convenience, we outline here the proof that \Cref{thm:z-dense} implies \Cref{thm:normal-core-triv} before carrying out the formal proof in \Cref{sec:normal-core-triv}. Note the following observations:
\begin{enumerate}
	\item $\mathfrak{X}_{\vec{k},\CC}(S^2 \setminus \Delta)$ contains an equivariantly embedded copy of the Teichm\"{u}ller space $\mathcal{T}_{0,n}$, and so $\PMod(S^2,\Delta)$ acts faithfully on $\mathfrak{X}_{\vec{k},\CC}(S^2 \setminus \Delta)$.
	\item $\Br(\pi)$ is given by the stabilizer $\Stab\, [\phi_\pi]$ under the Hurwitz action by a theorem of Moishezon (see \cite[Theorem 2.1]{how-large}), and so
\begin{align}
	\Core(\Br(\pi)) = \bigcap_{g \in \Mod(S^2,\Delta)} \Stab (g \cdot [\phi_\pi]).
\end{align}
\end{enumerate}
Thus, \Cref{thm:z-dense} implies that $\Core(\Br(\pi))$ injects into the symmetric group $S_n$ since the action of $\Mod(S^2,\Delta)$ is algebraic. \Cref{thm:normal-core-triv} follows via standard mapping class group theory which prohibits the existence of finite normal subgroups of $\Mod(S^2,\Delta)$ when $\abs{\Delta} \geq 5$ (see \cite[Sections 11.5 and 11.6]{ivanov-teich-groups}).

\vspace{0.2cm}
\para{Universal braids over the disk}

Let $\pi : M \to D^2$ be a smooth genus one Lefschetz fibration with $n$ nodal fibers over $\Delta \subseteq D^2 \setminus \partial D^2$. After fixing some base point $b \in \partial D^2$ one may consider the \textit{representation variety}
\begin{align*}
	\mathcal{R}_\CC(D^2,\Delta) \coloneqq \Hom(\pi_1(D^2 \setminus \Delta,b), \SL_2\CC).
\end{align*}
As above, $\mathcal{R}_\CC(D^2,\Delta)$ admits an action by $\Mod(D^2,\Delta)$ by precomposition (with an inversion to make it a left action). Note that the classical braid group $B_n$ equals $\Mod(D^2, \Delta)$, since braids are naturally mapping classes on the disk \cite[Chapter 9]{primer}. In a slight abuse of notation, we also refer to the action of $B_n$ on $\mathcal{R}_\CC(D^2,\Delta)$ as the Hurwitz action. Our next result is a classification of when $\Br(\pi)$ has finite index in $B_n$ and a calculation of $\Core(\Br(\pi))$ in these cases. Examples of this phenomenon appear naturally as small restrictions of the monodromy representation of the rational elliptic fibration over $\PP^1$ to embedded disks (see \Cref{sec:finite-stuff} for more details). These examples were also independently studied by L\"{o}nne and Ito \cite{lonne-br3,ito-fin-braid-hurwitz}. 
\begin{theorem}[Classification of virtually universal braid groups]\label{thm:finite-classification}
	Let $\pi : M \to D^2$ be a smooth genus one Lefschetz fibration with $n \geq 2$ nodal fibers over $\Delta = \{x_1,\ldots,x_n\}$ and monodromy representation 
	\[
		\phi_\pi : \pi_1(D^2 \setminus \Delta) \to \SL_2\ZZ,
	\]
	then
	\begin{enumerate}[label=(\arabic*),ref=(\arabic*)]
		\item\label{item:virt-univ-class} the index $[B_n : \Br(\pi)] < \infty$ if and only if $\phi_\pi$ is Hurwitz equivalent to one of $\phi_{n}$ or $\psi_n$ where
	\begin{align*}
		\phi_n(\gamma_i) &\coloneqq {\begin{pmatrix} 1 & -1 \\ 0 & 1 \end{pmatrix}}	\text{ if } i \in \{1,\ldots,n\} \text{ is odd} && \psi_n(\gamma_i) \coloneqq {\begin{pmatrix} 1 & -1 \\0 & 1\end{pmatrix}} \text{ for all } i \in \{1,\ldots,n\} \\
		\phi_n(\gamma_i) &\coloneqq {\begin{pmatrix} 1 & 0 \\ 1 & 1 \end{pmatrix}}	\text{ if } i \in \{1,\ldots,n\} \text{ is even} 
	\end{align*}
	for $\gamma_1,\ldots,\gamma_n$ a standard sequence of generators for $\pi_1(D^2 \setminus \Delta)$. Furthermore, when $\phi_\pi$ is Hurwitz equivalent to $\phi_n$, then $n \leq 4$.
		\item\label{item:virt-univ-cox} provided that $\phi_\pi$ is Hurwitz equivalent to $\phi_n$, the group $\Core(\Br(\pi))$ is
			\begin{align*}
				\Core(\Br(\pi)) = \langle \langle \sigma_1^3 \rangle \rangle,
			\end{align*}
			where $\sigma_1$ is a single half-twist in $B_n$ and $\langle \langle \sigma_1^3 \rangle  \rangle$ indicates the normal closure of $\langle \sigma_1^3 \rangle $. Note that, since all half-twists are conjugate in $B_n$, the subgroup $\langle \langle \sigma_1^3 \rangle  \rangle$ is independent of the choice of half-twist $\sigma_1$.
	\end{enumerate}
\end{theorem}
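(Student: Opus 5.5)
We use the disk analogue of the fact quoted for $S^2$: $\Br(\pi)$ is the stabilizer of $\phi_\pi$ under the Hurwitz action of $B_n$ on $\mathcal{R}_\CC(D^2,\Delta)$, taken up to simultaneous conjugation by $\SL_2\ZZ$. Hence $[B_n:\Br(\pi)]$ is the size of the Hurwitz orbit of $\phi_\pi$ in $\Hom(\pi_1(D^2\setminus\Delta),\SL_2\ZZ)/\SL_2\ZZ$. Each $\phi_\pi(\gamma_i)$ is conjugate to $T=\begin{pmatrix}1&-1\\0&1\end{pmatrix}$, i.e.\ $\phi_\pi(\gamma_i)=T_{v_i}$ is a Dehn twist about a primitive vector $v_i\in\ZZ^2$, defined up to sign. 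The Hurwitz move $\sigma_i$ replaces $(v_i,v_{i+1})$ by $(T_{v_i}^{\pm1}v_{i+1},v_i)$. So the question becomes: for which tuples $(v_1,\dots,v_n)$ is the orbit finite?

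\textbf{Pairwise reduction.} Adjacent pairs can be made arbitrary pairs by Hurwitz moves, so finiteness of the whole orbit forces finiteness of the $B_2$-orbit of every pair $(v_i,v_j)$ that can be made adjacent. The two-strand case is explicit. Put $m=\abs{\det(v_i,v_j)}$. Then $\sigma_1^k$ acts on $v_{i+1}$ by powers of $T_{v_i}^{m}$-type transvections, and the orbit is finite only when $m\in\{0,1\}$:
\begin{itemize}
\item If $m=0$, the two twists are equal.
\item If $m=1$, the pair generates $\SL_2\ZZ$ and $(T_{v_i}T_{v_j})^3$ is central of order 2; the relevant braid relation gives $\sigma_1^{6}$ acting trivially up to the $-I$ ambiguity, and the orbit has size 3.
\item If $m\geq 2$, the vectors $T_{v_i}^kv_j$ are pairwise distinct, so the orbit is infinite.
\end{itemize}
Consequently every pair of vectors occurring anywhere in the orbit must have determinant in $\{0,\pm1\}$.

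\textbf{Classification of tuples.} Suppose three vectors $v_1,v_2,v_3$ have pairwise $\abs{\det}=1$. Up to $\SL_2\ZZ$ they are $e_1,e_2,\pm e_1\pm e_2$. Applying one Hurwitz move to the third vector produces a vector such as $e_1+2e_2$, whose determinant with $e_1$ is 2. Tracking signs, the one configuration that survives is the $\phi_3$ pattern, where the third vector equals the first. In general the distinct vectors in the tuple therefore span at most two classes with mutual determinant $\pm1$. This puts $\phi_\pi$, after Hurwitz moves, in alternating form $\phi_n$ or in constant form $\psi_n$. Sorting out when the orbit stays finite is the main obstacle. I would first normalize $v_1=e_1$ and induct on $n$, using that sub-tuples of finite-orbit tuples have finite orbits. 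To bound $n\leq4$ for $\phi_n$, I would exhibit for $n=5$ an explicit braid word producing a pair with $\abs{\det}=2$. A product of twists about $e_1$ and $e_2$, such as $T_{e_1}T_{e_2}T_{e_1}T_{e_2}$, moves $e_1$ to a vector meeting $e_1$ twice, and five strands leave room to bring such a vector adjacent to $e_1$.

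For the converse, $\psi_n$ is fixed by all of $B_n$. For $\phi_n$ with $n\leq4$, the Hurwitz action on the orbit factors through the action on tuples from a finite set. So the orbit is finite, and this finite set can be counted directly.

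\textbf{Part (2).} $\Core(\Br(\pi))$ is the kernel of the permutation action of $B_n$ on the finite orbit. The containment $\langle\langle\sigma_1^3\rangle\rangle\subseteq\Core$ holds because $\sigma_i^3$ fixes every tuple in the orbit. By the two-strand computation it fixes pairs with $\det=\pm1$ (orbit of size 3), and it trivially fixes pairs with $\det=0$. For the reverse containment I would use Coxeter's theorem that $B_n/\langle\langle\sigma_1^3\rangle\rangle$ is finite for $n\leq5$, of order $24$, $648$, $155520$ for $n=3,4,5$ (and $3$ for $n=2$). It then suffices to check that the orbit action of $B_n/\langle\langle\sigma_1^3\rangle\rangle$ is faithful. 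For $n=2,3,4$ I would prove this by comparing orders with the computed orbit data, or by identifying the action with the known representations $\SL_2(\mathbb F_3)$ and $\Sp_4(\mathbb F_3)$-type quotients arising from the reduction modulo 3 of the monodromy.
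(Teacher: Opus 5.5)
\textbf{Verdict: genuine gap.} Your part (2) and your necessary conditions are sound, but the central classification step of part (1) is missing, and the statements you put in its place are false.

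\textbf{What works.} Your necessary conditions are the paper's own tools. A pair with $|\det|\geq 2$ anywhere in the orbit forces an infinite orbit (\Cref{lemma:int2-inf}), and so does a counterclockwise Farey triangle (\Cref{lemma:clockwise}). Your part (2) follows the paper's route: compare Coxeter's orders with the permutation image of $B_n$ on the $8$- and $27$-point orbits. Deriving $\langle\langle\sigma_1^3\rangle\rangle\subseteq\Core(\Br(\pi))$ from the size-$3$ two-strand orbits is a nice substitute for reading it off \Cref{fig:braid-orbits}.

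\textbf{A setup error.} Over $D^2$ the lift must be the identity on $\pi^{-1}(\partial D^2)$. So $\Br(\pi)$ is the stabilizer of $\phi_\pi$ itself, not of its conjugacy class. Indeed, $\sigma_1^2$ acts on a pair as simultaneous conjugation by $T_{v_1}T_{v_2}$. Modulo conjugation every two-strand orbit would then have at most two points, contradicting both your $m\geq 2$ claim and the $n=2$ case of the theorem.

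\textbf{The gap.} It is exactly the step you call the main obstacle.
\begin{itemize}
\item Tuples in a finite orbit need not use only two vectors: $\sigma_2$ sends the vanishing cycles $(\alpha,\beta,\alpha)$ of $\phi_3$ to $(\alpha,\gamma,\beta)$. What is true is existential: \emph{some} tuple in the orbit has all entries on one Farey edge. That needs a proof. The paper's \Cref{lemma:all-edge} sweeps left to right: the third distinct vector must be $\delta$, later entries lie in $\{\alpha,\delta\}$, and intermediate ones equal $\beta$. It then uses $(\alpha,\beta)\mapsto(\delta,\alpha)$ to remove the $\beta$'s.
\item A two-letter word does not automatically reduce to $\phi_n$ or $\psi_n$. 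The words $(\alpha,\alpha,\beta,\beta)$ and $(\alpha,\beta,\beta,\alpha)$ satisfy every pairwise and triangle constraint. Yet $\sigma_2$ and $\sigma_3$ turn them into $(\alpha,\delta,\alpha,\beta)$ and $(\alpha,\beta,\gamma,\beta)$, which contain counterclockwise subspiders, so their orbits are infinite.
\item The classification therefore needs an explicit search. For $n\leq 4$, decide which of the $2^{n-1}-1$ words have finite orbit, and check that those lie in the orbit of $\phi_n$. For $n=5$, show that all $15$ nonconstant words have infinite orbit; for example, $(\alpha,\alpha,\alpha,\alpha,\beta)$ needs $\sigma_2\sigma_3\sigma_4^{-1}$. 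Then $n\geq 5$ follows, because any nonconstant word of length $\geq 5$ contains a nonconstant subspider of length $5$.
\item Since the reduction to $\phi_n$ or $\psi_n$ is not available, killing only $\phi_5$, as you propose, does not handle $n\geq 5$.
\end{itemize}
Your induction on $n$ could replace \Cref{lemma:all-edge}: the prefix is equivalent to $\psi_{n-1}$ or $\phi_{n-1}$, and after normalizing, the new vector lies in $\{\alpha,\beta,\gamma,\delta\}$. But it needs the same explicit computations at $n=3,4,5$, and you supply none of them.

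\textbf{Your proposed braid for $\phi_5$ fails.} $(T_{e_1}T_{e_2})^2e_1=\pm(e_1-e_2)$, which meets $e_1$ only once. The paper instead applies $\sigma_2\sigma_1^2\sigma_4$ to reach $(\beta,\beta,\delta,\gamma,\beta)$, where $i(\delta,\gamma)=2$.
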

\noindent 
Note that part \ref{item:virt-univ-class} of \Cref{thm:finite-classification} is equivalent to the existence of an isomorphism 
\begin{center}
	\begin{tikzcd}
		M \ar[r,"\widetilde{f}"] \ar[d,"\pi"'] & M' \ar[d,"\pi'"] \\
		D^2 \ar[r,"f"] & D^2
	\end{tikzcd}
\end{center}
such that $f\big|_{\partial D^2} = \Id$ between $\pi : M \to D^2$ and a standard $\pi' : M' \to D^2$ (specified by $\psi_n$ or $\phi_n$) whenever $[B_n : \Br(\pi)] < \infty$ \cite[Lemma 7a]{moishezon}. The proof of part \ref{item:virt-univ-class} of \Cref{thm:finite-classification} uses a lemma of the author \cite[Lemma 5.1]{how-large} to restrict the vanishing cycles of $\pi$ to curves which intersect at most once. This technique reduces the classification to a combinatorial search on the Farey complex to find vanishing cycles with intersection number $\geq 2$ in the Hurwitz orbit of $\phi_\pi$ for some finite list of fibrations $\pi$. The proof of part \ref{item:virt-univ-cox} is computational, and relies on Coxeter's formula for $B_n/\langle \langle \sigma_1^3 \rangle  \rangle$ when $n \in \{3,4\}$ (see \Cref{thm:coxeter}). We defer to \Cref{subsec:finite} and \Cref{rem:coxeter-platonic} for a discussion of the relationship between part \ref{item:virt-univ-cox} of \Cref{thm:finite-classification}, Coxeter's factor groups of braid groups, and the platonic solids.
\begin{remark}\label{rmk:ito-lonne}
	Part \ref{item:virt-univ-class} of \Cref{thm:finite-classification} can be deduced from \cite[Theorem 3]{ito-fin-braid-hurwitz} along with a finite computation. Ito shows that if $n \geq 5$, then for any representation $\widetilde{
\rho} : F_n \to B_3$ with finite Hurwitz orbit, there is a partition $I,J$ of the generators $\gamma_i$ of $F_n$ so that $\widetilde{\rho}(\gamma_i)$ commutes with $\widetilde{\rho}(\gamma_j)$ for $i \in I, j \in J$. Lifting a representation $\rho : F_n \to \SL_2\ZZ \cong B_3/c^2$ where $Z(B_3) = \langle c \rangle$ shows that two lifts of a Dehn twist commute if and only if they differ by $Z(B_3)$, and so $\rho$ is conjugate to $\psi_n$. For completeness, we give an independent proof of part \ref{item:virt-univ-class} of \Cref{thm:finite-classification} not relying on Ito's work.
	
Furthermore, L\"{o}nne's work implies that $\Br(\pi)$ has infinite index in $B_n$ for genus one fibrations with monodromy $\phi_n$ when $n \geq 5$; his results give the cases $n = 5, 6$ \cite[Lemma 3.2, Proposition 3.4, Theorem 3.5]{lonne-br3}, and the remaining cases follow by restricting to an embedded subdisk. The author independently showed this result in \cite[Theorem 1.3]{how-large}. L\"{o}nne also computes a generating set for $\Br(\pi)$ for the family $\phi_n$ when $n \leq 6$ \cite[Lemma 2.3, Theorem 3.5]{lonne-br3}.
\end{remark}

\para{Associating branched covers to Lefschetz fibrations}

The following construction arose as an attempt to understand the phenomena observed in part \ref{item:virt-univ-cox} of \Cref{thm:finite-classification} via branched covers. Let $\pi : M \to B$ be a Lefschetz fibration of genus $g$ over $B = S^2$ or $B = D^2$ with singular fibers over $\Delta \subseteq B$ and monodromy $\phi_\pi$. Given a finite-index normal subgroup $H$ of $\Mod(\Sigma_g)$, we construct a finite branched cover $\pi_H : \Sigma_\pi(H) \to B$ of the base. Informally, the quotient map $\Mod(\Sigma_g) \to \Mod(\Sigma_g)/H$ allows us to consistently associate to each fiber a finite set of data as in \Cref{fig:branched-from-lefschetz}. Formally, one constructs $\pi_H$ by taking the Galois cover with monodromy
\begin{align*}
	\phi_{\pi_H} : \pi_1(B \setminus \Delta) \xrightarrow{\phi_\pi} \Mod(\Sigma_g) \to \Mod(\Sigma_g)/H.
\end{align*}
Likewise, we obtain a finite-index subgroup $\Br(\pi_H) < \Mod(B,\Delta)$ consisting of those braids which lift to a fiber-preserving diffeomorphism of $\Sigma_\pi(H)$. Note that the reduction in dimension from $M$ to $\Sigma_\pi(H)$ leads to a corresponding reduction in complexity of the subgroup $\Br(\pi_H)$. \Cref{fig:branched-from-lefschetz} schematically depicts the procedure $\pi \rightsquigarrow \pi_H$ and the reduction in complexity in a particular case.
\begin{figure}
	\centering
	\includestandalone[scale=0.45]{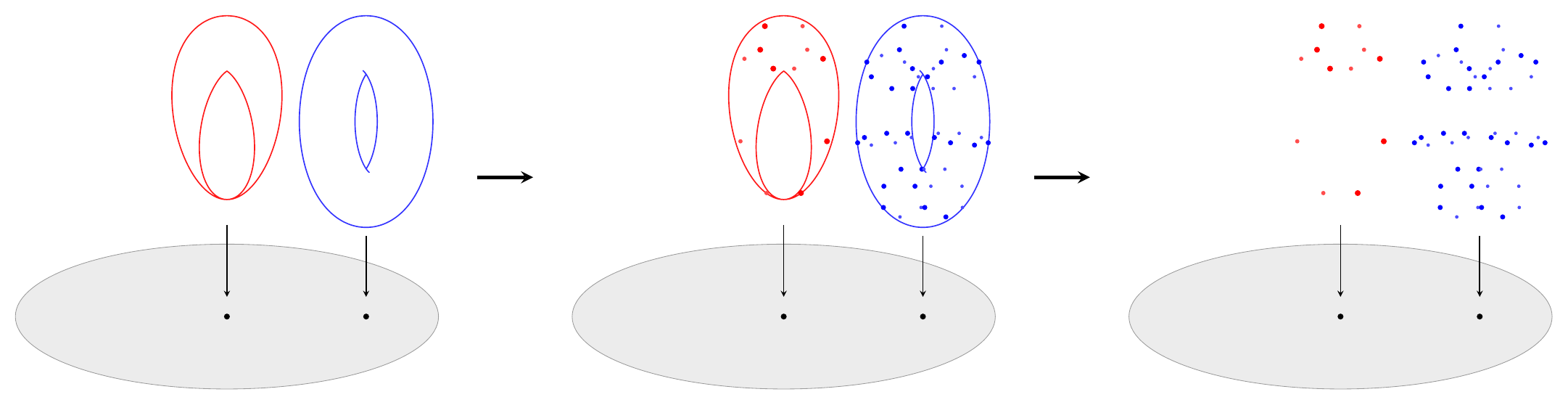}
	\caption{Constructing a branched cover from a genus one fibration. The displayed cover $p$ places the 7-torsion points (besides the origin) of the fiber over each regular value in the base. Note that this is equivalent to placing $H^1(F_x,\mathbb{F}_7) \setminus \{0\}$ over each $x \in D^2 \setminus 0$ where $F_x$ is the fiber over $x$. Thus the Galois closure of $p$ is $\pi[7] : \Sigma_\pi[7] \to D^2 \setminus 0$.}
	\label{fig:branched-from-lefschetz}
\end{figure}

For $g = 1$ define the \textit{level $N$ liftable braid group associated to $\pi$} by $\Br(\pi[N])$, where 
\[
	\pi[N] = \pi_{\Gamma(N)} : \Sigma_\pi[N] \to B
\]
and $\Gamma(N) = \ker(\SL_2\ZZ \to \SL_2(\ZZ/N\ZZ))$. An alternative viewpoint on $\pi[N]$ comes from the classifying map $B \setminus \Delta \to \mathbb{H}^2/\SL_2\ZZ$ of the fibration $\pi$ (see \cite{smith-lefschetz-fibrations} for a construction in the smooth category). Using this language, $\pi[N]$ is the pullback of the congruence modular curve $X(N) \coloneqq \mathbb{H}^2/\Gamma(N)$ over $X(1) \coloneqq \mathbb{H}^2/\SL_2\ZZ$:
\begin{center}
	\begin{tikzcd}
		\Sigma_\pi[N] \setminus (\pi[N])^{-1}(\Delta) \ar[d,"{\pi[N]}"'] \ar[r] & X(N) \ar[d,"\SL_2(\ZZ/N\ZZ)"] \\ 
		B \setminus \Delta \ar[r] & X(1)
	\end{tikzcd}
\end{center}
In the case when $B = D^2$, since $\bigcap_{N \in I} \Gamma(N) = 1$ for any unbounded set $I$ of natural numbers, observe that
\begin{align*}
	\Br(\pi) = \bigcap_{N \in I} \Br(\pi[N]),
\end{align*}
and so in a precise sense $\Br(\pi[N])$ approximates $\Br(\pi)$ more and more accurately as $N$ increases. See \Cref{prop:local-to-global-smooth} for a more precise and a more general statement. Furthermore, a direct computation shows that when $B = D^2$ and $\phi_\pi$ is Hurwitz equivalent to $\phi_2,\phi_3,$ or $\phi_4$ we have that $\Br(\pi) = \Br(\pi[2])$. These observations motivate the following theorem, which generalizes part \ref{item:virt-univ-cox} of \Cref{thm:finite-classification}. Before stating the theorem, we require some basic definitions regarding symplectic groups and hyperelliptic covers. 

Given a disk $D^2$ with $n$ marked points, there is a double branched cover $\Sigma \to D^2$ branched over those $n$ points, referred to as the \textit{hyperelliptic cover} of $D^2$ with $n$ branch points. The surface $\Sigma$ has genus $g = \frac{n-1}{2}$ with one boundary component when $n$ is odd and it has genus $g = \frac{n-2}{2}$ with two boundary components when $n$ is even. Now given a finite field $\mathbb{F}_p$ with $p \neq 2$, and a vector $v \in \mathbb{F}_p^{2\ell}$, consider the subgroup $(\Sp_{2\ell}(\mathbb{F}_p))_v < \Sp_{2\ell}(\mathbb{F}_p)$ consisting of all symplectic matrices fixing $v$. The stabilizer $(\Sp_{2\ell}(\mathbb{F}_p))_v$ fits into an exact sequence
\begin{align*}
	1 \to U \to (\Sp_{2\ell}(\mathbb{F}_p))_v \to \Sp(v^\perp/v) \to 1,
\end{align*}
where $U$ is referred to as the unipotent radical of the subgroup $(\Sp_{2\ell}(\mathbb{F}_p))_v$ (see \Cref{prop:levi} for more details). We let $Z(U)$ denote the center of $U$, and note that $Z(U) \cong \mathbb{F}_p$. Equipped with this notation, we are prepared to state the theorem.

\begin{theorem}[Level two congruence core quotients]\label{thm:symp-iso-intro}
	Let $\pi : M \to D^2$ be a genus one Lefschetz fibration with $n$ singular fibers. For convenience let $g$ be the genus of the hyperelliptic cover of $D^2$ with $n$ branch points. Using the notation above:
	\begin{enumerate}
		\item $\Br(\pi[2]) = B_n$. In this case, the image of the monodromy $\phi_{\pi[2]} : \pi_1(D^2 \setminus \Delta) \to \SL_2 \mathbb{F}_2$ lies in a $\ZZ/2\ZZ$ subgroup of $\SL_2 \mathbb{F}_2 \cong S_3$.
		\item $\Br(\pi[2]) \neq B_n$ and the monodromy $\phi_{\pi[2]}(\partial D^2)$ of $\Sigma_\pi[2] \to D^2$ about $\partial D^2$ is nontrivial. In this case,
			\begin{align*}
				B_n/\Core(\Br(\pi[2])) &\cong
				\begin{cases}
					(\Sp_{2g+2}(\mathbb{F}_3))_{y_{g+1}} < \Sp_{2g+2}(\mathbb{F}_3) & n \equiv 0 \pmod{2} \\
					\Sp_{2g}(\mathbb{F}_3) & n \equiv 1 \pmod{2}
				\end{cases},
			\end{align*}
			where $y_{g+1} \in \mathbb{F}_3^{2g+2}$ is a fixed nonzero vector.
		\item $\Br(\pi[2]) \neq B_n$ and the monodromy $\phi_{\pi[2]}(\partial D^2)$ of $\Sigma_\pi[2] \to D^2$ about $\partial D^2$ is trivial, so that $n$ is necessarily even. In this case,
			\begin{align*}
				B_n/\Core(\Br(\pi[2])) \cong (\Sp_{2g+2}(\mathbb{F}_3))_{y_{g+1}}/Z(U).
			\end{align*}
	\end{enumerate}
	Furthermore, the isomorphisms in the latter two cases are naturally realized by the integral Burau representation reduced modulo $3$.
\end{theorem}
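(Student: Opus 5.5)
The plan is to reduce everything to the linear action of $B_n$ on Fox $3$-colorings, which is exactly the integral Burau representation at $t=-1$ reduced modulo $3$. Since each $\gamma_i$ maps to a Dehn twist, $\phi_{\pi[2]}(\gamma_i)$ is a transvection in $\SL_2\mathbb{F}_2\cong S_3$, i.e.\ a transposition. By the $\SL_2(\ZZ/2\ZZ)$-analogue of Moishezon's criterion, $\Br(\pi[2])$ is the stabilizer of the class of $\phi_{\pi[2]}$ under the Hurwitz action modulo conjugation in $S_3$. This holds because liftability of a braid to the Galois cover $\Sigma_\pi[2]\to D^2$ rel boundary is equivalent to fixing the monodromy up to conjugation.

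\textbf{Case (1).} If all $\phi_{\pi[2]}(\gamma_i)$ coincide, the image is a $\ZZ/2\ZZ$ and the tuple is Hurwitz-invariant, so $\Br(\pi[2])=B_n$. Conversely, if two distinct transpositions occur, the computation below shows $\Br(\pi[2])\neq B_n$, which separates case (1) from the others.

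\textbf{Setting up the linear action.} Identify the three transpositions of $S_3$ with the dihedral quandle $R_3=\mathbb{F}_3$, with operation $a\ast b=2b-a$. A tuple of transpositions then becomes a vector $v\in\mathbb{F}_3^n$, and the Hurwitz action becomes the linear action of the unreduced Burau representation at $t=-1$ modulo $3$. Conjugation in $S_3$ acts by the affine group $x\mapsto \pm x+c$. So the relevant data is the line $\pm\bar v$ in $V=\mathbb{F}_3^n/\mathbb{F}_3(1,\dots,1)$. The quotient $V$ is the reduced Burau module, which is identified $B_n$-equivariantly with $H_1$ of the hyperelliptic cover $\Sigma$ with coefficients in $\mathbb{F}_3$ (relative homology when $n$ is even).

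In this language:
\begin{itemize}
\item $\Br(\pi[2])=\Stab_{B_n}(\pm\bar v)$;
\item $\Core(\Br(\pi[2]))$ is the kernel of the $B_n$-action on the orbit $B_n\cdot\{\pm\bar v\}$;
\item $\bar v\neq 0$ exactly when we are not in case (1).
\end{itemize}

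\textbf{Identifying the image of $B_n$.} Next I would invoke A'Campo's theorem on the image of the hyperelliptic (Burau at $t=-1$) representation modulo $p$. For $n$ odd it surjects onto $\Sp_{2g}(\mathbb{F}_3)$. For $n$ even, on the $(n-1)$-dimensional module containing the boundary class $y_{g+1}$, it surjects onto the stabilizer $(\Sp_{2g+2}(\mathbb{F}_3))_{y_{g+1}}$, acting on a suitable $(2g+2)$-dimensional extension of the module.

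With the image in hand, the orbit of $\bar v$ spans an invariant subspace. The kernel of the action on the orbit, modulo $\pm$, is then the kernel of the representation on the span, up to the scalars $\pm 1$, which act trivially on lines. I would show that the span is all of $V$ using irreducibility/transitivity of $\Sp$ on nonzero vectors. I would also check that $-1$ is not in the image, or is absorbed, so that the quotient $B_n/\Core$ equals the full image. This gives case (2) for $n$ odd directly.

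\textbf{Main obstacle: $n$ even.} Here the boundary monodromy $\phi_{\pi[2]}(\partial D^2)$ corresponds to the pairing of $v$ with $y_{g+1}$, equivalently whether $\bar v$ lies in the radical direction.
\begin{itemize}
\item If this pairing is nonzero (case (2)), the orbit detects the full group $(\Sp_{2g+2}(\mathbb{F}_3))_{y_{g+1}}$.
\item If it is zero (case (3)), the transvections $x\mapsto x+\omega(x,y_{g+1})\,y_{g+1}$, which form $Z(U)\cong\mathbb{F}_3$, fix every vector in the orbit modulo the diagonal. So they lie in the core, and the quotient is $(\Sp_{2g+2}(\mathbb{F}_3))_{y_{g+1}}/Z(U)$.
\end{itemize}
To make this precise I would use the Levi decomposition of \Cref{prop:levi} to describe $U$ and check that nothing beyond $Z(U)$ acts trivially on the orbit. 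This bookkeeping with the radical, the unipotent radical, and the sign ambiguity is where I expect the real difficulty.
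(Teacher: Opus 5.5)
Your first step misidentifies $\Br(\pi[2])$, and the rest of the argument inherits the error.

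\textbf{The lifting criterion.} By definition a braid must lift to a map that is the identity on $\pi[2]^{-1}(\partial D^2)$, in particular on the fiber over $b\in\partial D^2$. Since $Z(S_3)=1$, this forces $\phi_{\pi[2]}\circ\beta_\ast=\phi_{\pi[2]}$ exactly, not merely up to conjugation in $S_3$. This is the disk version of the criterion, as in \Cref{lemma:double-lift}; the up-to-conjugation version is the one for closed bases. In your quandle language, $\Br(\pi[2])$ is the exact stabilizer of $v\in\mathbb{F}_3^n$. Replacing it by the stabilizer of the line $\pm\bar v\in\mathbb{F}_3^n/\mathbb{F}_3(1,\ldots,1)$ throws away exactly the data the answer depends on.

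\textbf{A concrete witness.} Take the full twist $z=(\sigma_1\cdots\sigma_{n-1})^n$. It is central and acts on every tuple by conjugation by the boundary monodromy. So it lies in your version of $\Br$, hence in its core. In case (2), however, it is not in the true $\Br(\pi[2])$.
\begin{itemize}
\item For $n$ odd, $z$ acts on $\mathbb{F}_3^n/\mathbb{F}_3(1,\ldots,1)\cong H^1(S[2];\mathbb{F}_3)$ by $-I$. So $-1$ is in the image and is not absorbed, and your route yields $\PSp_{2g}(\mathbb{F}_3)$ instead of $\Sp_{2g}(\mathbb{F}_3)$.
\item For $n$ even, your module $\mathbb{F}_3^n/\mathbb{F}_3(1,\ldots,1)\cong V/\langle y_{g+1}\rangle$ is only $(2g+1)$-dimensional: it is the quotient by $y_{g+1}$, not a module containing it. $Z(U)$ acts trivially on it, and $z$ maps to a generator of $Z(U)$. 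So in your framework $Z(U)$ lies in the core in case (2) too, and cases (2) and (3) cannot be told apart.
\item In case (3) it is worse. The orbit lies in $y_{g+1}^\perp$, on which all of $U$ acts trivially modulo $y_{g+1}$. So your check that ``nothing beyond $Z(U)$ acts trivially on the orbit'' fails, and with the sign ambiguity you would again get $\PSp_{2g}(\mathbb{F}_3)$.
\end{itemize}

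\textbf{The repair.} Keep the exact stabilizer of the whole vector, i.e.\ of the relative class $\alpha\in H^1(S[2],p^{-1}(b);\mathbb{F}_3)$ from \Cref{lemma:double-lift}. For $n$ even this is the $n$-dimensional module, with $(1,\ldots,1)$ corresponding to the invariant vector $y_{g+1}$.

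For $n$ odd, quotienting by the diagonal is then harmless. The boundary monodromy is a transposition, whose centralizer contains no $3$-cycle, so $\Stab v=\Stab\bar v$. Transitivity of $\Sp_{2g}(\mathbb{F}_3)$ on nonzero vectors, plus A'Campo, then gives $\Core=\ker\rho$ with no sign issue.

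For $n$ even you still need two ingredients you only gesture at.
\begin{itemize}
\item \emph{Surjectivity.} A'Campo gives surjectivity only onto the Levi quotient $\Sp(y_{g+1}^\perp/y_{g+1})$. To reach all of $(\Sp V)_{y_{g+1}}$ you need \Cref{prop:levi}(3) (the only invariant subgroups of $U$ are $1$, $Z(U)$, $U$) together with one explicit non-central element of $U$ in the image.
\item \emph{Containment.} You need $\rho(\Core)\subseteq Z(U)$. In case (3) this rests on elements of $U$ that move $\alpha$ by nonzero multiples of $y_{g+1}$, which is exactly the information invisible modulo the diagonal.
\end{itemize}
The (2)/(3) dichotomy then comes from the action of $\lambda\in Z(U)\cong\mathbb{F}_3$, which sends $\alpha\mapsto\alpha+a_{g+1}\lambda y_{g+1}$. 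Here $a_{g+1}$, the $x_{g+1}$-coefficient of $\alpha$, vanishes exactly when the boundary monodromy is trivial.
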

\noindent The decomposition of $\SL_2 \mathbb{F}_2 \cong S_3$ as a semidirect product $\ZZ/3\ZZ \rtimes \ZZ/2\ZZ$ explains the appearance of the Burau representation as well as the $\ZZ/3\ZZ$-homology in \Cref{thm:symp-iso-intro}, as we explain in \Cref{sec:branched-covers}. Notably, the cover $\Sigma_\pi[2]$ decomposes as a sequence of covers $\Sigma_\pi[2] \to S[2] \to D^2$, where $S[2]$ is the hyperelliptic surface. The lift from $S[2]$ to $\Sigma_\pi[2]$ is controlled by a mod 3 cohomology class, explaining the appearance of the Burau representation.

\begin{remark}[\textbf{$\pi_1$ of certain Hurwitz spaces}]
	\Cref{thm:symp-iso-intro} can be understood as a computation of the orbifold fundamental group of certain components of Hurwitz spaces over the configuration spaces $\Conf_n(\operatorname{Int}(D^2))\cong \Conf_n(\CC)$. In particular, let $\mathcal{H}_n$ denote the Hurwitz space of normal (not necessarily connected) branched covers of the disk over $n$ points in the interior such that
	\begin{enumerate}
		\item the Galois group of the cover is contained in $S_3$,
		\item the branching at each puncture is simple, i.e., the monodromy about a puncture is given by a transposition,
		\item each branched cover $p : X \to D^2$ is equipped with an identification $p^{-1}(b)$ with $S_3$, where $b \in \partial D^2$ is fixed.
	\end{enumerate}
	There are four types of components of $\mathcal{H}_n$. Case (1) of \Cref{thm:symp-iso-intro} corresponds to the components where $p$ is a disconnected cover. The other three types of components are determined by the conjugacy class of the monodromy about $\partial D^2$. Cases (2) and (3) of \Cref{thm:symp-iso-intro} deal with those components where the monodromy is nontrivial and trivial respectively.

	Let $S \subseteq \mathcal{H}_n$ be one of the components specified by \Cref{thm:symp-iso-intro} and let $\widetilde{S}$ be its Galois closure over $\Conf_n(\operatorname{Int}(D^2))$ determined by the cover $\mathcal{H}_n \to \Conf_n(\operatorname{Int}(D^2))$. Given the notation above, \Cref{thm:symp-iso-intro} implies that $\pi_1(\widetilde{S})$ is equal to $\ker \rho$ or $\rho^{-1}(Z(U))$, where $\rho$ is the integral Burau representation reduced modulo three. 
\end{remark}

\para{Relationship to previous work}

The group $\Br(\pi)$ for genus $g$ Lefschetz fibrations $\pi : M \to S^2$ where $M$ is a symplectic 4-manifold has previously appeared in work of Auroux--Mu{\~n}oz--Presas \cite[Section 10]{lagrangian-and-lefschetz}. Using a construction of Donaldson, they show that any symplectomorphism of a symplectic 4-manifold $M'$ can be isotoped to a fiber-preserving diffeomorphism for some Lefschetz pencil on $M'$ (note that, after blowing up finitely many points, the pencil becomes a Lefschetz fibration). In the genus one setting, $\Br(\pi)$ was also studied by L\"{o}nne via the monodromy of families of elliptic fibrations \cite{lonne-braid-monodromy}. As previously mentioned, \Cref{thm:finite-classification} is closely related to work of Ito and L\"{o}nne on Hurwitz orbits for representations valued in the braid group (see \Cref{rmk:ito-lonne} above).

\vspace{0.2cm}
\para{Organization of the paper}

\Cref{sec:normal-core-triv} begins by recalling a condition for Zariski density developed by Coccia--Litt in \cite{coccia-litt}. We then prove \Cref{thm:z-dense} and \Cref{thm:normal-core-triv}, concluding our study of universal braids for smooth elliptic fibrations over the sphere. We address the finite-index examples in \Cref{sec:finite-stuff} and prove part \ref{item:virt-univ-cox} of \Cref{thm:finite-classification}. We then prove part \ref{item:virt-univ-class} of \Cref{thm:finite-classification} in \Cref{subsec:fin-class}. The key step is \Cref{lemma:all-edge}, which shows that if $\Br(\pi)$ is finite-index in $B_n$ then there exists a complete set of vanishing cycles consisting of two curves with intersection number one. To generalize the finite-index behavior, we carry out the aforementioned branched cover construction in \Cref{sec:branched-covers} and prove \Cref{thm:symp-iso-intro} in \Cref{subsec:sp-iso}. 

\vspace{0.2cm}
\para{Code/Data availability and AI Usage}

A program in Sage/Python was implemented to do the necessary computations throughout the paper. The program can be obtained from GitHub at
\begin{align*}
	\text{\href{https://github.com/FayeAlephNil/solomon}{https://github.com/FayeAlephNil/solomon}} \tag{$\dagger\dagger$}\label{eq:code}
\end{align*}
or, upon reasonable request, from the author. 

Gemini Pro was used to improve the figure in \Cref{fig:branched-from-lefschetz} from a hand-drawn picture into professional TikZ code. Gemini Pro was also used to perform literature searches for references before being hand-checked by the author. Claude Opus was used in the editing process to detect typos and check proofs.

\vspace{0.2cm}
\para{Acknowledgements}

The author would like to thank Benson Farb for his constant support and encouragement. The author thanks William Chen, Benson Farb, Daniel Litt, Eduard Looijenga, Aaron Landesman, Dan Margalit, Daniel Minahan, Ethan Pesikoff, and Zhong Zhang for many helpful conversations regarding the present work. In particular, to Benson Farb, Aaron Landesman, Dan Margalit, Ethan Pesikoff, and Zhong Zhang, the author is grateful for their helpful comments on earlier drafts of this paper.

\section{Proof of Zariski density and universal braids over the sphere}\label{sec:normal-core-triv}

The goal of this section is to prove \Cref{thm:z-dense} and to show it implies \Cref{thm:normal-core-triv}. Before doing so, we review a general condition for verifying Zariski density of the Hurwitz orbits of integral points on character varieties due to Coccia--Litt \cite{coccia-litt}. Let $\Sigma_{g,n}$ be a genus $g$ surface with $n$ punctures, and let $\Hom(\pi_1 \Sigma_{g,n}, \SL_2)$ denote the $\ZZ$-scheme whose $R$-points for a ring $R$ are given by
\begin{align*}
	\Hom(\pi_1 \Sigma_{g,n}, \SL_2(R)).
\end{align*}
We write the \textit{$\SL_2$-character variety} as the GIT quotient
\begin{align*}
	\mathfrak{X}(\Sigma_{g,n}) \coloneqq \Hom(\pi_1 \Sigma_{g,n},\SL_2)\sslash \SL_2
\end{align*}
whose $R$-points we denote by $\mathfrak{X}_R(\Sigma_{g,n})$.
\begin{remark}
	Note that, for the subset of $\Hom(\pi_1 \Sigma_{g,n},\SL_2(R))$ consisting of irreducible representations, the choice between the GIT, set-theoretic, and categorical quotients is arbitrary. In this section, all relevant representations are irreducible, and so from now on we will suppress this particular subtlety.
\end{remark}
Furthermore, let $\Mod(\Sigma_{g,n}) \coloneqq \pi_0(\Diff^+(\Sigma_{g,n}))$ denote the (impure) mapping class group of $\Sigma_{g,n}$ and $\PMod(\Sigma_{g,n}) < \Mod(\Sigma_{g,n})$ denote the subgroup of pure mapping classes, i.e., those which fix each puncture pointwise. As in the introduction, $\mathfrak{X}(\Sigma_{g,n})$ admits an algebraic action by the mapping class group $\Mod(\Sigma_{g,n})$. Furthermore the pure mapping class group $\PMod(\Sigma_{g,n})$ preserves the fibers of the map
\begin{align*}
	\tr_{\mathrm{punct}} : \mathfrak{X}_R(\Sigma_{g,n}) \to \mathbb{A}^n
\end{align*}
taking a representation $[\rho]$ to the traces $\tr(\rho(\gamma_i))$ where $\gamma_i$ is a simple closed curve homotopic to the $i$-th puncture. For $\vec{k} \in \mathbb{A}^n$ let
\begin{align*}
	\mathfrak{X}_{\vec{k},R}(\Sigma_{g,n}) \coloneqq \tr_{\mathrm{punct}}^{-1}(\vec{k})
\end{align*}
denote the relative character variety. When $\Sigma_{g,n}$ is clear from context, we will omit it from the notation. 
\begin{proposition}[Coccia--Litt, {\cite[Proposition 3.1.10]{coccia-litt}}]\label{prop:coccia-litt}
	Let $[\rho] \in \mathfrak{X}_{\vec{k},\overline{\Q}}(\Sigma_{g,n})$ and suppose there exists some pants decomposition $\mathcal{P} = \{a_1,\ldots,a_{3g-3+n}\}$ of $\Sigma_{g,n}$ such that
	\begin{enumerate}
		\item\label{cond:eig} The eigenvalues of $\rho(a_i)$ are not roots of unity, i.e., $\tr(\rho(a_i))$ does not lie in $E = \{e^{2\pi i \ell/j} + e^{-2\pi i\ell/j} \st \ell \in \ZZ, j \in \N\}$,
		\item\label{cond:irr} Either $(g,n,\vec{k}) = (1,1,2)$ or for each component $\Sigma$ of the surface $\Sigma_{g,n} - \mathcal{P}$ obtained by cutting along the pants curves $\{a_i\}$, the traces $x,y,z$ obtained from $\rho$ by evaluating along $\partial \Sigma$ do not satisfy
			\begin{align*}
				x^2 + y^2 + z^2 - xyz = 4,
			\end{align*}
	\end{enumerate}
	then $\PMod(\Sigma_{g,n}) \cdot [\rho]$ is Zariski dense in $\mathfrak{X}_{\vec{k},\overline{\Q}}(\Sigma_{g,n})$.
\end{proposition}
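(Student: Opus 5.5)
Since \Cref{prop:coccia-litt} is quoted from \cite{coccia-litt}, we only sketch how we would prove it. Set $m = 3g-3+n$. Let $Z$ be the Zariski closure of $\PMod(\Sigma_{g,n}) \cdot [\rho]$ in $\mathfrak{X}_{\vec{k},\overline{\Q}}$, which has dimension $2m$. The plan has four steps:
\begin{enumerate}
	\item build the fibration of $\mathfrak{X}_{\vec{k}}$ by the pants traces;
	\item show that $Z$ contains the entire fiber through $[\rho]$;
	\item show that $Z$ dominates the base $\mathbb{A}^m$;
	\item conclude by a dimension count.
\end{enumerate}
$Z$ has finitely many irreducible components, which $\PMod$ permutes. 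So some finite-index subgroup $\Gamma$ fixes each component, and it suffices to show that the component $Z_0$ containing $[\rho]$ has dimension $2m$.

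\textbf{The trace fibration.} Consider
\[
	T = (\tr a_1, \ldots, \tr a_m) : \mathfrak{X}_{\vec{k}} \to \mathbb{A}^m .
\]
Condition \ref{cond:irr} says the boundary traces $(x,y,z)$ of each pair of pants avoid the Markov-type cubic $x^2+y^2+z^2-xyz=4$. That cubic is exactly the reducible locus for $\SL_2$-representations of a free group of rank two. So on each pair of pants, $\rho$ restricts to an irreducible representation, determined up to conjugacy by its boundary traces. By condition \ref{cond:eig}, each $\rho(a_i)$ is semisimple with distinct eigenvalues. Gluing the pants back together (an amalgamated product or HNN extension along $\langle a_i\rangle$) then introduces exactly one gluing parameter in the centralizer $\cong \mathbb{G}_m$ of $\rho(a_i)$. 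Hence, near $[\rho]$, the fiber $F = T^{-1}(T[\rho])$ is a torsor for the torus $\mathbb{G}_m^m$ of Goldman twist flows, and $T$ is smooth of relative dimension $m$ there.

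\textbf{$Z$ contains the fiber.} The Dehn twist $\tau_{a_i}$ acts on $F$ as translation by the element $\mathrm{diag}(\lambda_i,\lambda_i^{-1})$ of the $i$-th $\mathbb{G}_m$ factor, where $\lambda_i$ is an eigenvalue of $\rho(a_i)$. Since $\lambda_i$ is not a root of unity by condition \ref{cond:eig}, the powers $\lambda_i^k$ are Zariski dense in $\mathbb{G}_m$. Some power $\tau_{a_i}^{N}$ lies in $\Gamma$, and $\lambda_i^{N}$ is still not a root of unity. Therefore $Z_0 \supseteq \overline{\langle \tau_{a_1}^N,\ldots,\tau_{a_m}^N\rangle\cdot[\rho]}$, which is the component of $F$ through $[\rho]$. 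This gives $\dim Z_0 \geq m$.

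\textbf{$Z$ dominates the base.} For each $i$, choose a curve $b_i$ with $i(a_i,b_i)\in\{1,2\}$ that is disjoint from all $a_j$ with $j \neq i$. On $F$, the function $\tr(b_i)$ is a nonconstant Laurent polynomial in the $i$-th twist coordinate. Applying suitable powers of $\tau_{b_i}$ to points of $F \cap Z_0$ produces points whose value of $\tr a_i$ varies: $\tr(a_i)$ evaluated after $\tau_{b_i}^k$ is a nonconstant function on $F$ for appropriate $k$. Conditions \ref{cond:eig} and \ref{cond:irr} are Zariski-open on the base, except that condition \ref{cond:eig} excludes a countable set. So at a very general point of $Z_0$ the previous step applies again, and $Z_0$ contains whole torus fibers over a set of base points. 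Inducting on $i$, the image $T(Z_0)$ contains a subvariety of dimension $m$, i.e.\ $T(Z_0)$ is dense in $\mathbb{A}^m$.

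\textbf{Dimension count.} Combining the two previous steps gives $\dim Z_0 \geq m + m = 2m = \dim \mathfrak{X}_{\vec{k}}$. Since $\mathfrak{X}_{\vec{k}}$ is irreducible in the relevant cases, $Z_0 = \mathfrak{X}_{\vec{k}}$. The case $(g,n,\vec{k})=(1,1,2)$ is handled separately: there the cubic locus is special, but the Markov-type action of $\SL_2\ZZ$ can be checked directly.

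\textbf{Main obstacle.} We expect the hard step to be showing that $Z$ dominates the base. The difficulty is justifying that very general points of $Z_0$ still satisfy the non-root-of-unity hypothesis; the way around this is to use that $\Gamma$-invariant subvarieties are determined at very general points. One must also verify that $\tr b_i$ genuinely moves $\tr a_i$, which is an explicit trace-identity computation on a one-holed torus or a four-holed sphere.
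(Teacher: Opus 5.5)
The paper gives no proof of \Cref{prop:coccia-litt}. It is quoted from Coccia--Litt and used only as a black box in the proof of \Cref{thm:z-dense}, so there is no in-paper argument to compare with. Judged on its own, your outline is the natural twist-flow argument, and the reduction plus your first two steps are fine. The cuff twists act on the torus fiber of $T$ by non-torsion translations. A closed $\Gamma$-invariant set containing a point contains that point's $\Gamma$-orbit closure.

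The dominance step, however, does not work as written. Over $\overline{\Q}$ the phrase ``very general point'' is vacuous, since $Z_0(\overline{\Q})$ is countable. Your proposed remedy (``$\Gamma$-invariant subvarieties are determined at very general points'') is not an argument. What you need is a concrete count, which goes as follows.
\begin{enumerate}
\item Call a base point $s$ good if its cuff traces avoid $E$, condition \ref{cond:irr} holds there, and the torus fiber $F_s$ lies in $Z_0$.
\item On $F_s$, the function $q \mapsto \tr\bigl(\rho_q(\tau_{b_i}^{kN}(a_i))\bigr)$ depends only on the $i$-th twist coordinate. You must check it is nonconstant for every good $s$, not just for $s = T[\rho]$.
\item Its image is then cofinite in $\overline{\Q}$, so it takes infinitely many values $t \notin E$.
\item These $t$ also avoid the finitely many values at which an adjacent pair of pants violates \ref{cond:irr}. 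The cubic is monic quadratic in each variable. A pants bounded twice by $a_i$ has third trace $\neq 2$ outside the case $(1,1,2)$.
\item Since $\tau_{b_i}$ fixes every $a_j$ with $j \neq i$, the resulting points of $Z_0$ are again good, so by your first step $Z_0$ contains their whole torus fibers.
\item A closed subset of $\mathbb{A}^{i-1} \times \mathbb{A}^1$ that meets $\{s\} \times \mathbb{A}^1$ in infinitely many points, for a Zariski-dense set of $s$, is everything. This is what actually makes your induction on $i$ run.
\end{enumerate}

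The second gap is in the last step. Your dimension count only shows that every irreducible component of the orbit closure has dimension $2m$. To conclude density you need $\mathfrak{X}_{\vec{k}}$ to be irreducible of dimension $2m$ for the given $\vec{k}$, and you assert this only ``in the relevant cases.'' This is a genuine input, not a formality. When $\vec{k} = (2,\ldots,2)$, the case this paper uses, the trace fiber also contains characters with some $\rho(\gamma_i) = \Id$. One must know these lie in the closure of the top-dimensional part, so you need to cite or prove irreducibility.

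The exceptional case $(1,1,2)$ likewise falls outside your torus-fiber picture, because every character there is reducible. It needs its own argument. The variety is $\mathbb{G}_m^2/\{\pm 1\}$ with $\SL_2\ZZ$ acting by monomial maps. Starting from $(\lambda,\mu)$ with $\lambda$ not a root of unity, twisting along $a$ gives the curve $\{\lambda\} \times \mathbb{G}_m$ in the orbit closure. Its translates under powers of the twist along $b$ are infinitely many distinct curves, which together are Zariski dense.
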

Condition \ref{cond:eig} can be rephrased as requiring that each $\rho(a_i)$ is neither parabolic nor of finite order. Likewise, the second half of condition \ref{cond:irr} of \Cref{prop:coccia-litt} is equivalent to requiring that $\rho$ is irreducible on each component $\Sigma$ of the cut surface $\Sigma_{g,n} - \mathcal{P}$ (see \cite[Lemma~3.3]{whang2020}). With \Cref{prop:coccia-litt} in hand, we prove \Cref{thm:z-dense}.
\begin{figure}[t!]
	\subfloat[A schematic tree depicting the pants decomposition on $S^2 \setminus \Delta$. Each leaf represents a puncture, and each non-leaf non-root node represents a cuff curve of the pants decomposition. The dashed edge indicates a continuation of the pattern above.]{
		\centering
		\makebox[0.9\textwidth]{
			\includestandalone[scale=0.8]{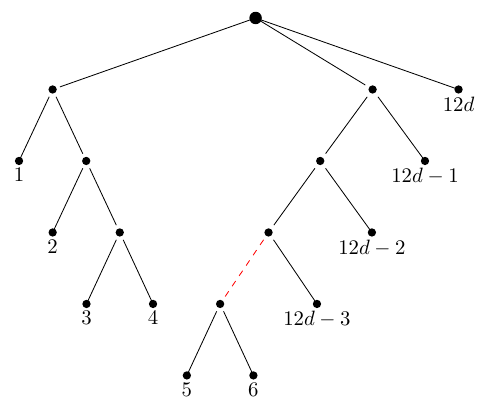}
		}
	}

	\subfloat[A graphical depiction of the pants decomposition on $S^2 \setminus \Delta$ in the plane, note that there is no puncture at $\infty$.]{
		\centering
		\makebox[0.9\textwidth]{
			\includegraphics{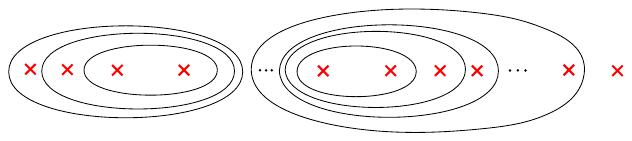}
		}
	}
	\caption{Two depictions of the pants decomposition on $S^2 \setminus \Delta$ used in the proof of \Cref{thm:z-dense}.}
	\label{fig:special-pants}

\end{figure}
\begin{proof}[Proof of \Cref{thm:z-dense}]
	Let $\pi : M \to S^2$ be a smooth elliptic fibration with $n = 12d$ nodal fibers along $\Delta \subseteq S^2$ and monodromy representation
	\begin{align*}
		\phi_\pi : \pi_1(S^2 \setminus \Delta) \to \SL_2\ZZ.
	\end{align*}
	We wish to show that $\Mod(S^2,\Delta) \cdot [\phi_\pi]$ is Zariski dense in the relative character variety $\mathfrak{X}_{\vec{k},\CC}(S^2 \setminus \Delta)$. Note that, a priori, $\phi_\pi$ is arbitrary. However, Moishezon's classification of smooth genus one fibrations (see \cite[Theorem 9 and Lemma 8]{moishezon}) implies that given \textit{any} sequence of matrices $A_1,\ldots,A_n \in \SL_2\ZZ$ such that $A_i$ is conjugate to $\begin{psmallmatrix} 1 & -1 \\ 0 & 1 \end{psmallmatrix}$ for each $i$ and
	\begin{align*}
		A_1 \cdots A_n = \Id,
	\end{align*}
	there exists a sequence of curves $\gamma_1,\ldots,\gamma_n$ surrounding each puncture of $S^2 \setminus \Delta$ so that $\phi_\pi(\gamma_i) = A_i$. Our goal is to find such a sequence of matrices $A_1,\ldots,A_n$ which is amenable to applying \Cref{prop:coccia-litt}, see \Cref{rem:finding-matrices} below for an explanation of the heuristics we used to find the following sequence. 

	\step{Constructing the factorization and the pants decomposition}
	\noindent Write the matrices
	\begin{align*}
		A &= \begin{pmatrix} 1 & -1 \\ 0 & 1 \end{pmatrix} && B = \begin{pmatrix} 1 & 0 \\ 1 & 1 \end{pmatrix} \\
		C &= 
		\begin{pmatrix} 
			-6 & -1 \\
			49 & 8
		\end{pmatrix} && D = 
		\begin{pmatrix} 
			21 & -16 \\
			25 & -19
		\end{pmatrix} \\
		E &= \begin{pmatrix} 0 & -1 \\ 1 & 2 \end{pmatrix},
	\end{align*}
	and note that each of $A,B,C,D,E$ is conjugate to the standard shear $A$. Furthermore, note that
	\begin{equation}
		(BBCADB A A A B A B)(EA(AB)^2 EA(AB)^2)^{d-1} = \Id \cdot (\Id)^{d-1} = \Id,\label{eq:special-mon}
	\end{equation}
	and so by Moishezon's theorem there are some simple closed curves $\gamma_1,\ldots,\gamma_{12d}$ in $S^2 \setminus \Delta$ so that $\gamma_i$ is a curve about puncture $i$ and
	\begin{align*}
		\phi_\pi(\gamma_1) \cdots \phi_\pi(\gamma_{12d}) = \Id
	\end{align*}
	is the monodromy factorization given in \eqref{eq:special-mon}. Let $\mathcal{P}$ denote the pants decomposition determined by $\gamma_1,\ldots,\gamma_{12d}$ and the tree depicted in \Cref{fig:special-pants}. Each leaf node in the tree corresponds to a puncture $i$ so that travelling the ``spine'' of the pants decomposition and looping about puncture $i$ gives the curve $\gamma_i$. Each node which is neither a leaf nor the root node corresponds to a cuff in the pants decomposition.

	\step{Verifying condition \ref{cond:eig} of \Cref{prop:coccia-litt}}
	\noindent Denote the cuff curves in the left branch of the tree by $a_1,a_2,a_3$ and the cuff curves in the middle branch of the tree by $a_4,\ldots,a_{n-3}$, where $n = 12d$ is the number of punctures. Furthermore, let $\mathcal{P} = \{a_1,\ldots,a_{n-3}\}$ be the induced pants decomposition. Below we abbreviate $\tr(\phi_\pi(a_i))$ by $\tr(a_i)$. Direct computation shows that
	\begin{align}
		\tr(a_1) = -47 && \tr(a_2) = -42 && \tr(a_3) = -37.\label{eq:trace-small}
	\end{align}
	Furthermore for $j \geq 10$
	\begin{align*}
		\phi_\pi(a_j) = \pm \phi_\pi(a_{j + 6}),
	\end{align*}
	because $EA(AB)^2 = -\Id$. Thus, one may compute that
	\begin{align}
		\tr(a_4) &= -14 && \tr(a_5) = -20 && \tr(a_6) = -26 \nonumber \\
		\tr(a_7) &= -32 && \tr(a_8) = -63 && \tr(a_9) = -32 \nonumber \\
		\tr(a_{10}) &= -37 && \tr(a_{11}) = 20 && \tr(a_{12}) = 26 \nonumber \\
		\tr(a_{13}) &= 32 && \tr(a_{14}) = 63 && \tr(a_{15}) = 32 \label{eq:trace-big}
	\end{align}
	(see the Sage implementation at \eqref{eq:code}). Thus condition \ref{cond:eig} of \Cref{prop:coccia-litt} holds since $\abs{\tr(a_i)} > 2$ for all $i$. 

	\step{Verifying condition \ref{cond:irr} of \Cref{prop:coccia-litt}}
	\noindent Note that each valence three node in the tree given by \Cref{fig:special-pants} represents a component of the surface $(S^2 \setminus \Delta) - \mathcal{P}$ obtained by cutting along the cuff curves $a_i$. Furthermore, each component contains at least one puncture. Let $\Sigma$ be one component of $(S^2 \setminus \Delta) - \mathcal{P}$; we must verify that the boundary traces $x,y,z$ satisfy
	\begin{align}
		x^2 + y^2 + z^2 - xyz \neq 4.\label{eq:neq-4}
	\end{align}
	At least one of $x,y,z$ is equal to two, since $\Sigma$ contains a puncture and $\tr(\phi_\pi(\gamma_i)) = 2$ for all $\gamma_i$. Thus, \eqref{eq:neq-4} holds if and only if $x \neq y$, where $x,y$ are the remaining boundary curves. Direct analysis of the pair of pants given by \Cref{fig:special-pants} and the traces computed in \eqref{eq:trace-small} and \eqref{eq:trace-big} shows that $x \neq y$ for all pairs of pants in the given decomposition. Therefore condition \ref{cond:irr} of \Cref{prop:coccia-litt} holds. Applying \Cref{prop:coccia-litt} shows that $\PMod(S^2,\Delta) \cdot [\phi_\pi]$ is Zariski dense in the relative character variety $\mathfrak{X}_{\vec{k},\overline{\Q}}(S^2 \setminus \Delta)$ where $\vec{k} = (2,2,\ldots,2)$ as desired. Base change then implies Zariski density in $\mathfrak{X}_{\vec{k},\CC}(S^2 \setminus \Delta)$.
\end{proof}

\begin{remark}\label{rem:finding-matrices}
	The arbitrary nature of the monodromy factorization produced in \eqref{eq:special-mon} is mildly misleading. Let $A,B,C,D,E \in \SL_2\ZZ$ be represented by Dehn twists about simple closed curves $\alpha,\beta,\gamma,\de,\epsilon$ in the torus respectively. The general method used to produce the factorization was as follows. 
	\begin{enumerate}
		\item Start with the ``standard'' monodromy factorization
			\begin{align*}
				(T_\alpha T_\beta)^{6d} = \Id
			\end{align*}
			and apply Hurwitz moves to create two curves $\gamma,\de$ with high intersection number with each other and with $\alpha,\beta$.
		\item Use the pants decomposition to cordon off the vanishing cycles $\gamma,\de$ so that the traces of boundary curves are large.
		\item Apply a small number of Hurwitz moves to the remaining $12(d-1)$ matrices to prevent the product from ever having trace two.
	\end{enumerate}
	Each portion of this plan was carried out using a combination of computer search along with the blood, sweat, and tears of the author.
\end{remark}

To end the section, we formally complete the proof that \Cref{thm:z-dense} implies \Cref{thm:normal-core-triv}. To do so requires a lemma embedding Teichm\"{u}ller space in the $\SL_2$-character variety. We believe \Cref{lemma:embed} will be familiar to experts, and the compact case can be found in Goldman's work \cite{goldman-geom-struct}. For the convenience of the reader, we sketch the argument of the genus zero punctured case here.
\begin{lemma}\label{lemma:embed}
	Let $\Sigma_{0,m}$ denote the genus $0$ surface with $m$ punctures, then the Teichm\"{u}ller space $\mathcal{T}_{0,m}$ of marked hyperbolic structures on $\Sigma_{0,m}$ embeds into $\mathfrak{X}_{\vec{k}',\CC}(\Sigma_{0,m})$ for $\vec{k}' = (-2,\ldots,-2)$. Furthermore, the embedding is $\PMod(\Sigma_{0,m})$-equivariant.
\end{lemma}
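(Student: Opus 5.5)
The plan is the classical holonomy construction. A marked complete finite-area hyperbolic structure on $\Sigma_{0,m}$ is a pair $(X, f)$, where $X \cong \mathbb{H}^2/\Gamma$ with $\Gamma < \PSL_2\R$ discrete and torsion-free, and $f : \Sigma_{0,m} \to X$ is a marking. Completeness with finite area forces every puncture to be a cusp. The marking yields a holonomy representation $\bar\rho : \pi_1(\Sigma_{0,m}) \to \PSL_2\R$, well defined up to conjugation, in which each peripheral loop $\gamma_i$ is sent to a parabolic element. Changing the marking by an isotopy does not change the conjugacy class, so we obtain a map $\mathcal{T}_{0,m} \to \Hom(\pi_1\Sigma_{0,m}, \PSL_2\R)/\PSL_2\R$.

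Next we lift to $\SL_2\R$. Since $\pi_1(\Sigma_{0,m})$ is free on $\gamma_1, \ldots, \gamma_{m-1}$ with $\gamma_m = (\gamma_1\cdots\gamma_{m-1})^{-1}$, lifts always exist. For $i < m$ we choose each $\rho(\gamma_i)$ to be the lift with trace $-2$; parabolic elements have lifts of trace $\pm 2$, so this choice is possible. It remains to check that $\tr \rho(\gamma_m) = -2$ as well. This is the main obstacle, since the sign of the last lift is forced by the relation. I would handle it with the Euler class / Milnor–Wood type computation: the obstruction to lifting with prescribed peripheral signs is measured by the relative Euler number. For a Fuchsian holonomy of a genus $0$ surface with $m$ cusps, this number is $\pm\chi = \pm(2-m)$.

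The sign count works as follows. Each parabolic whose lift has trace $-2$ contributes a ``half-turn'' of rotation. Meanwhile, $\chi(\Sigma_{0,m}) = 2-m \equiv m \pmod 2$, so the product relation $\gamma_1 \cdots \gamma_m = 1$ lifts to $\rho(\gamma_1)\cdots\rho(\gamma_m) = (-1)^{m+\chi} = \Id$ exactly when all $m$ lifts have trace $-2$. If this proves too slick, I would fall back on a concrete check. Any two Fuchsian structures are connected by a path in $\mathcal{T}_{0,m}$ (it is a cell), and along such a path the traces stay in $\{\pm 2\}$, so by continuity it suffices to verify the claim for a single explicit example. For that I would take the standard example of a reflection group of an ideal polygon with vertices at the cusps: its index-two orientation-preserving subgroup has generators $\begin{psmallmatrix} -1 & x_i \\ 0 & -1\end{psmallmatrix}$ conjugated suitably, and their product is $\Id$.

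Because $\mathcal{T}_{0,m}$ is simply connected and the lifts vary continuously, the lift is well defined on all of $\mathcal{T}_{0,m}$. This gives a continuous map $\mathcal{T}_{0,m} \to \mathfrak{X}_{\vec{k}',\CC}(\Sigma_{0,m})$ with $\vec{k}' = (-2, \ldots, -2)$.

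For injectivity, Fuchsian representations are irreducible, since they are non-elementary. Hence $\SL_2\CC$-conjugacy between two such representations implies $\SL_2\R$-conjugacy, possibly up to an element of $\GL_2\R$ with negative determinant, which reverses orientation. Orientation-reversing conjugation would change the marking's orientation, and that is excluded because both structures carry orientation-compatible holonomy, distinguished by the sign of the Euler number. Conjugate holonomies then give isometric marked surfaces, i.e. the same point of $\mathcal{T}_{0,m}$. Injectivity in fact holds up to the homeomorphism onto its image, by the Ahlfors–Bers / Goldman argument; if a topological embedding is needed, I would appeal to this argument.

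Finally, equivariance. $\PMod(\Sigma_{0,m})$ acts on $\mathcal{T}_{0,m}$ by precomposing the marking, and on the character variety by precomposing with the induced outer automorphism. The holonomy of $(X, f \circ h^{-1})$ is $\rho \circ h_*^{-1}$, so equivariance is tautological. Pure mapping classes preserve each $\gamma_i$ up to conjugacy, so they preserve the trace vector $\vec{k}'$, and the lift signs are preserved as well.
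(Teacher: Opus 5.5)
Your proposal is correct, and its skeleton matches the paper's. You take holonomy, normalize the lifts of $\gamma_1,\ldots,\gamma_{m-1}$ to trace $-2$, show the relation forces $\tr\widetilde{\rho}(\gamma_m)=-2$, and get equivariance because pure mapping classes send each $\gamma_i$ to a conjugate. You differ from the paper in the two substantive steps.

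\emph{The sign of the last trace.} Your primary argument is the relative Euler number. The trace-$(+2)$ (translation-number-zero) lifts multiply to $(-\Id)^{e}$ with $|e|=|\chi(\Sigma_{0,m})|=m-2$, so the trace-$(-2)$ lifts multiply to $(-1)^{m+e}\Id=\Id$. This is right, but it uses the cusped Milnor--Wood equality (e.g.\ Burger--Iozzi--Wienhard) as a black box.

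The paper uses exactly your fallback. The sign is a continuous $\{\pm 1\}$-valued function on the connected space $\mathcal{T}_{0,m}$, so one example suffices. \Cref{lemma:one-hyper-lift} supplies it as the $(m-2)$-fold cyclic cover of $\PP^1\setminus\{0,1,\infty\}=\mathbb{H}^2/\overline{\Gamma}(2)$, computing the last trace directly from explicit $\SL_2\R$ lifts of the level-$2$ generators.

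Your doubled ideal $m$-gon would also work, but as written its essential point is only asserted. Lift the reflections to $R_i\in\GL_2\R$ with $\det R_i=-1$ and set $P_i=R_iR_{i+1}$. Then $\prod_i P_i=\Id$ is automatic. The real content is that the number of $P_i$ with trace $-2$ has the parity of $m$, since flipping the sign of one $R_i$ changes two adjacent traces at once. That still requires a computation.

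\emph{Injectivity.} You use irreducibility and the fact that $\SL_2\CC$-conjugate irreducible real representations are $\GL_2\R$-conjugate. You then rule out orientation-reversing conjugacy by the sign of the Euler number. The paper avoids the orientation issue by writing $\mathcal{T}_{0,m}=\operatorname{DF}_{\mathrm{para}}/\PGL_2\R$. It gets injectivity from the fact that finitely many length functions, hence traces, determine a point.

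Your route is more conceptual and carries over to other signatures. The paper's is elementary and self-contained.
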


\begin{proof}
	Recall that for $\abs{\Delta} = m$ a finite subset of $S^2$,
	\begin{align*}
		\mathcal{T}_{0,m} = \operatorname{DF}_{\mathrm{para}}(\pi_1(S^2 \setminus \Delta), \PSL_2\R)/\PGL_2\R,
	\end{align*}
	where $\operatorname{DF}_{\mathrm{para}}$ denotes the space of discrete faithful representations such that each simple closed curve $\gamma_i$ about a puncture $p_i \in \Delta$ is sent to a parabolic element of $\PSL_2\R$ and the action of $\PGL_2\R$ is by conjugation. Each element $\rho$ of $\operatorname{DF}_{\mathrm{para}}(\pi_1(S^2 \setminus \Delta),\PSL_2\R)$ admits a uniquely specified lift $\widetilde{\rho}$ to $\operatorname{DF}_{\mathrm{para}}(\pi_1(S^2 \setminus \Delta),\SL_2\R)$ by choosing $\tr(\widetilde{\rho}(\gamma_i)) = -2$ for $i = 1,\ldots,m-1$. The sign of $\tr(\widetilde{\rho}(\gamma_m))$ is then determined by the equation
	\begin{align*}
		\widetilde{\rho}(\gamma_1) \cdots \widetilde{\rho}(\gamma_m) = \Id.
	\end{align*}
	Because an element of $\mathcal{T}_{0,m}$ is determined by the lengths of $3m-9$ curves (see \cite[Theorem 10.7]{primer} for the compact case, which is analogous), the induced map $\iota : \mathcal{T}_{0,m} \to \mathfrak{X}_\CC(S^2 \setminus \Delta)$ is injective. Continuity of $\iota$ similarly follows because the map to $\mathfrak{X}_\CC(S^2\setminus\Delta)$ is determined by the traces along simple closed curves, and these traces are determined by the length functions on $\mathcal{T}_{0,m}$. Therefore, $[\rho] \mapsto \operatorname{sign}(\tr(\widetilde{\rho}(\gamma_m)))$ is a continuous function on $\mathcal{T}_{0,m}$ taking values in $\pm 1$. Since $\mathcal{T}_{0,m}$ is connected, $\operatorname{sign}(\tr(\widetilde{\rho}(\gamma_m)))$ is constant over $\mathcal{T}_{0,m}$. Thus, to show that $\iota$ maps $\mathcal{T}_{0,m}$ into $\mathfrak{X}_{\vec{k}',\CC}(S^2 \setminus \Delta)$ it suffices to construct a single hyperbolic structure on $\Sigma_{0,m}$ admitting a lift to $\SL_2\R$ where all the peripheral traces are $-2$. We do so below in \Cref{lemma:one-hyper-lift}. Finally, to show $\PMod(S^2, \Delta)$-equivariance, note first that $\Mod(S^2,\Delta)$ in fact acts on both sides. Given $\sigma_i$ a generator of $\Mod(S^2,\Delta)$ it suffices to show that $\widetilde{\rho} \circ (\sigma_i)_\ast$ is a lift of $\rho \circ (\sigma_i)_\ast$ with peripheral traces all $-2$. Because $(\sigma_i)_\ast : F_m \to F_m$ exchanges and conjugates the generators $\gamma_1,\ldots,\gamma_m$, the equivariance follows from the fact that conjugation preserves traces.
\end{proof}

\begin{lemma}\label{lemma:one-hyper-lift}
	Let $X$ denote the hyperbolic structure on $\Sigma_{0,m}$ given by the $(m-2)$-sheeted branched cover of $\PP^1 \setminus \{0,1,\infty\}$ completely ramified at $0$ and $\infty$. Then the representation $\rho_X : \pi_1(\Sigma_{0,m}) \to \PSL_2\R$ corresponding to the hyperbolic structure $X$ admits a lift where all the peripheral traces are $-2$.
\end{lemma}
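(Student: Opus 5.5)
Let $\Gamma(2)$ denote the level two congruence subgroup of $\PSL_2\ZZ$; it is free of rank two and $\mathbb{H}^2/\Gamma(2) \cong \PP^1 \setminus \{0,1,\infty\}$ is the thrice-punctured sphere. The approach is to realize $X$ concretely as a quotient $\mathbb{H}^2/G$ for an explicit finite-index subgroup $G < \Gamma(2)$, write explicit matrix generators for the lift of $G$ to $\SL_2\ZZ$, and then adjust signs. Since $\pi_1(\Sigma_{0,m})$ is free, any choice of signed lifts of a free generating set gives an $\SL_2\R$ lift. The only issue is the sign of the trace on the one peripheral curve that is a product of the others.

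\textbf{Step 1: the model for $\pi_1$ of the thrice-punctured sphere.} Lift $\Gamma(2)$ to $\SL_2\ZZ$ using the peripheral generators
\[
	a = \begin{pmatrix} 1 & 2 \\ 0 & 1 \end{pmatrix} \quad\text{(cusp at $\infty$)}, \qquad b = \begin{pmatrix} 1 & 0 \\ -2 & 1 \end{pmatrix} \quad\text{(cusp at $0$)}.
\]
The third peripheral loop, around the cusp at $1$, is $(ab)^{-1}$. We have $\tr(ab) = -2$.

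\textbf{Step 2: the peripheral curves of $X$.} The $(m-2)$-sheeted cyclic cover completely ramified at $0$ and $\infty$ corresponds to the kernel $G$ of the map $F_2 = \langle a, b\rangle \to \ZZ/(m-2)$ sending $a \mapsto 1$ and $b \mapsto -1$. Then $ab \mapsto 0$, so the loop around $1$ lifts to $m-2$ distinct punctures, which together with the two totally ramified points gives $m$ punctures in total. The peripheral classes of $X$ are therefore:
\begin{enumerate}
	\item $a^{m-2}$, with lift trace $+2$;
	\item $b^{m-2}$, with lift trace $+2$;
	\item the conjugates $a^{j}(ab)a^{-j}$ for $j = 0,\ldots,m-3$, each with lift trace $-2$.
\end{enumerate}

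\textbf{Step 3: fixing the signs.} Compose the restriction of the lift to $G$ with the character $\chi : G \to \{\pm 1\}$. This yields another lift of $\rho_X$, and it negates exactly the traces of the peripheral elements on which $\chi = -1$. The required $\chi$ must satisfy $\chi(a^{m-2}) = -1$, $\chi(b^{m-2}) = -1$, and $\chi = +1$ on every conjugate of $ab$.

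A natural first attempt is $\chi(g) = (-1)^{\text{exponent sum of } a \text{ in } g}$. This is well defined on $G$, but it only gives $\chi(a^{m-2}) = (-1)^{m-2}$, so it works only when $m$ is odd. The cleaner route is to use the homology of $X$. Since $X$ has genus zero, $H_1(X;\ZZ/2)$ is generated by the $m$ peripheral classes subject to the single relation that their sum is zero. A homomorphism $\chi$ exists exactly when the set of peripheral classes assigned $-1$ has even size. Here that set consists of $a^{m-2}$ and $b^{m-2}$, which has size two, so $\chi$ exists.

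To be careful about what the homology gives: $H_1(G)$ is free on any $m-1$ of the peripheral classes, so $\chi$ can be specified freely on those classes and is then forced on the remaining one. Check the forced value directly: in the lift the product of all peripheral generators is $\Id$, so its trace is $+2$, and the traces of the individual generators are $+2$, $+2$, and $(-2)^{m-2}$ in total sign. The element that is forced is the product of the others, and the product relation together with the parity count above makes its sign come out correctly.

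\textbf{Main obstacle.} The delicate point is bookkeeping the peripheral generators so that their product is exactly $\Id$ in $\SL_2$, not $-\Id$. This requires a standard generating set, with the peripheral curves taken in cyclic order with consistent basepoint arcs. The cleanest way to handle it is to verify the resulting sign by a direct computation in the smallest case, $m = 3$, where $X$ is the base thrice-punctured sphere itself and the lift in Step 1 already has traces $(2, 2, -2)$. One sign change on two of them then gives $(-2,-2,-2)$.
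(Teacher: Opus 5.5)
Your proof is correct, and it differs from the paper's in how it settles the sign at the last puncture.

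\textbf{The paper's route.} It starts from a lift of the $\overline{\Gamma}(2)$ holonomy in which all three peripheral traces are already $-2$. It then prescribes trace $-2$ on the $m-1$ free generators $\de_0,\de_{\zeta^i}$ of $\pi_1(X)$. Finally it finds the trace at the remaining puncture by an explicit telescoping computation, which writes $\widetilde{\rho}_X(\de_\infty)^{-1}$ as $(-1)^{m-1}$ times a conjugate of $\widetilde{\rho}_Y(\gamma_\infty)^{-(m-2)}$.

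\textbf{Your route.} You restrict a lift with peripheral traces $(2,2,-2)$ and read off all $m$ peripheral traces of $X$ at once: two are $+2$ and $m-2$ are $-2$. You then twist by a class in $H^1(X;\ZZ/2)$. The peripheral classes span $H_1(X;\ZZ/2)$ subject only to the relation that their sum vanishes, so exactly the even-size sign patterns are realizable, and no matrix product ever has to be evaluated.

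\textbf{What each buys.} Your version exposes the underlying invariant: the parity of the number of peripheral traces equal to $-2$ is the same for every lift. It would therefore apply to any finite cover once one lift's peripheral traces are known. The paper's version is a self-contained computation specific to this cyclic cover.

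\textbf{What to cut and what to add.} You should delete the paragraph beginning ``To be careful'' and the ``main obstacle'' paragraph. As written they are not valid reasoning: traces are not multiplicative, so a ``total sign'' of the generators' traces says nothing about the trace of their product. Likewise, a check at $m=3$ would not settle general $m$. They are also unnecessary. The restricted lift is already a homomorphism, the peripheral elements have explicit images, and your parity argument finishes the proof, so there is no $\pm\Id$ bookkeeping left to do.

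What is worth adding is the one-line double-coset argument that $a^{j}(ab)a^{-j}$, $0 \le j \le m-3$, represent each of the $m-2$ punctures over the cusp at $1$ exactly once. This holds because $G$ is normal and $ab \in G$. Your parity count needs exactly one representative per puncture.
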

\begin{proof}
	Recall that $Y = \PP^1 \setminus \{0,1,\infty\}$ admits a unique complex (and thereby hyperbolic) structure since $\PSL_2 \CC$ acts 3-transitively on $\PP^1$. One incarnation of this structure is the modular curve $\mathbb{H}^2/\overline{\Gamma}(2)$, where $\overline{\Gamma}(2) \coloneqq \ker(\PSL_2\ZZ \to \PSL_2 \mathbb{F}_2)$ is the level 2 congruence subgroup of $\PSL_2\ZZ$.

	From this description, we see that the monodromy representation corresponding to $Y$ has a lift to $\SL_2\R$ given by
	\begin{align*}
		\widetilde{\rho}_Y \coloneqq \pi_1(Y) &\to \SL_2\R \\
		\widetilde{\rho}_Y(\gamma_0) &= \begin{pmatrix} -1 & 2 \\ 0& -1\end{pmatrix}  \\
		\widetilde{\rho}_Y(\gamma_1) &= \begin{pmatrix} -1 & 0 \\ -2 & -1\end{pmatrix} \\
		\widetilde{\rho}_Y(\gamma_\infty) &= \begin{pmatrix} 1 & 2 \\-2 & -3 \end{pmatrix}  
	\end{align*}
	where $\gamma_0,\gamma_1,\gamma_\infty$ are peripheral generators of $\pi_1(Y)$ so that $\gamma_0\gamma_1\gamma_\infty = 1$. The covering $X \to Y$ is given by the subgroup generated by 
	\begin{align*}
		\de_0 \coloneqq \gamma_0^{m-2} && \de_{\zeta^i} \coloneqq \gamma_0^{i}\gamma_1\gamma_0^{-i} && \de_\infty \coloneqq \gamma_\infty^{m-2},
	\end{align*}
where $\zeta$ is a primitive $(m-2)$-th root of unity and $i$ ranges from $0$ to $m-3$. In this way, $\pi_1(X)$ is presented as 
	\[
		\left\langle \de_0,\de_{\zeta^i}, \de_\infty \mid  \de_0\left(\prod_{i=0}^{m-3} \de_{\zeta^i}\right)\de_\infty = 1\right\rangle
	\]
	Choose a lift $\widetilde{\rho}_X$ of $\rho_X$ to $\SL_2\R$ so that the trace of $\widetilde{\rho}_X(\de_0)$ is $-2$ and the trace of $\widetilde{\rho}_X(\de_{\zeta^i})$ is $-2$ for all $i$. One may then compute that
	\begin{align*}
		\widetilde{\rho}_X(\de_0) &= (-1)^{m-1}\widetilde{\rho}_Y(\gamma_0)^{m-2} \\
		\widetilde{\rho}_X(\de_{\zeta^i}) &= \widetilde{\rho}_Y(\gamma_0)^{i} \widetilde{\rho}_Y(\gamma_1) \widetilde{\rho}_Y(\gamma_0)^{-i},
	\end{align*}
	since trace is conjugation invariant. Thus
	\begin{align*}
		\widetilde{\rho}_X(\de_\infty)^{-1} &= (-1)^{m-1}\widetilde{\rho}_Y(\gamma_0)^{m-2} \prod_{i=0}^{m-3} \widetilde{\rho}_Y(\gamma_0)^{i}\widetilde{\rho}_Y(\gamma_1) \widetilde{\rho}_Y(\gamma_0)^{-i} \\
									&= (-1)^{m-1}\widetilde{\rho}_Y(\gamma_0)^{m-3} (\widetilde{\rho}_Y(\gamma_0)\widetilde{\rho}_Y(\gamma_1))^{m-2} \widetilde{\rho}_Y(\gamma_0)^{3-m} \\
									&= (-1)^{m-1}\widetilde{\rho}_Y(\gamma_0)^{m-3} (\widetilde{\rho}_Y(\gamma_{\infty})^{-1})^{m-2} \widetilde{\rho}_Y(\gamma_0)^{3-m}
	\end{align*}
	Now since $\widetilde{\rho}_Y(\gamma_\infty)^{-1}$ has eigenvalues $-1,-1$ we conclude that
	\begin{align*}
		\tr(\widetilde{\rho}_X(\de_\infty)) = (-1)^{m-1}(-1)^{m-2} \cdot 2  = -2
	\end{align*}
	as desired.
\end{proof}

With \Cref{lemma:embed,lemma:one-hyper-lift} in place, we prove \Cref{thm:normal-core-triv}.
\begin{proof}[Proof of \Cref{thm:normal-core-triv}]
	Let $\pi : M \to S^2$ be an elliptic fibration with $n$ nodal fibers along $\Delta \subseteq S^2$. We wish to show that $\Core(\Br(\pi)) < \Mod(S^2,\Delta)$ is trivial. We first show that
	\begin{align*}
		\Gamma \coloneqq \Core(\Br(\pi)) \cap \PMod(S^2, \Delta)
	\end{align*}
	is trivial. By \Cref{thm:z-dense}, the Hurwitz orbit of the monodromy representation $\phi_\pi$ is Zariski dense in the relative character variety $\mathfrak{X}_{\vec{k},\CC}(S^2 \setminus \Delta)$ for $\vec{k} = (2,2,2,\ldots,2)$. Therefore, we have that $\Core(\Br(\pi))$ acts by the identity on $\mathfrak{X}_{\vec{k},\CC}(S^2 \setminus \Delta)$, since it fixes the entire Hurwitz orbit of $\phi_\pi$ and the action of $\Mod(S^2,\Delta)$ is algebraic. Thus, to show $\Gamma = 1$ it suffices to show that $\PMod(S^2,\Delta)$ acts faithfully on $\mathfrak{X}_{\vec{k},\CC}(S^2 \setminus \Delta)$ where $\vec{k} = (2,\ldots,2)$. By the classification of elliptic fibrations, $n = \abs{\Delta} = 12d$ for some $d \geq 1$ \cite[Theorem 9]{moishezon}, and hence $\mathfrak{X}_{\vec{k},\CC}(S^2 \setminus \Delta)$ is equivariantly isomorphic to $\mathfrak{X}_{\vec{k}',\CC}(S^2 \setminus \Delta)$ where $\vec{k}' = (-2,\ldots,-2)$. The isomorphism is given by taking $[\rho]$ to $\rho'$ where $\rho'(\gamma_i) = -\rho(\gamma_i)$ for a standard set of generators $\gamma_1,\ldots,\gamma_n$ of $\pi_1(S^2 \setminus \Delta)$. Therefore, by \Cref{lemma:embed}, $\mathcal{T}_{0,n}$ equivariantly embeds into $\mathfrak{X}_{\vec{k}',\CC}(S^2 \setminus \Delta) \cong \mathfrak{X}_{\vec{k},\CC}(S^2 \setminus \Delta)$. Because $\PMod(S^2, \Delta)$ is centerless, it acts faithfully on $\mathcal{T}_{0,n}$ \cite[Sections 3.4 and 12.1]{primer}. We conclude that $\PMod(S^2,\Delta)$ acts faithfully on $\mathfrak{X}_{\vec{k},\CC}(S^2 \setminus \Delta)$ as well.

	Since $\Core(\Br(\pi)) \cap \PMod(S^2,\Delta) = 1$, it follows that $\Core(\Br(\pi))$ injects into $S_n$ and so $\Core(\Br(\pi))$ is finite. However, standard arguments in mapping class group theory imply that $\Mod(S^2,\Delta)$ contains no finite normal subgroups when $\abs{\Delta} \geq 5$ (see \cite[Sections 11.5 and 11.6]{ivanov-teich-groups}). By Moishezon's theorem, any nontrivial smooth elliptic fibration with only nodal fibers has at least 12 singular fibers, concluding the proof \cite[Theorem 9]{moishezon}.
\end{proof}
In light of \Cref{thm:normal-core-triv} we make the following conjecture.
\begin{conjecture}
	If $\pi : M \to S^2$ is any genus $g$ Lefschetz fibration with simple singularities whose monodromy representation surjects onto $\Mod(\Sigma_g)$, then $\Core(\Br(\pi))$ is trivial.
\end{conjecture}

\section{Universal braids over the disk}\label{sec:finite-stuff}

\subsection{The virtually universal examples}\label{subsec:finite}

The aim of this section is to analyze the finite-index examples classified by part \ref{item:virt-univ-class} of \Cref{thm:finite-classification} and to prove part \ref{item:virt-univ-cox} of \Cref{thm:finite-classification}. The following examples were the impetus for the current project and for the classification, and so deserve special care and attention. Before we begin, we set some notation. Throughout, let $\gamma_1,\ldots,\gamma_n \in \pi_1(D^2 \setminus n \text{ points})$ be a standard system of generators so that each $\gamma_i$ surrounds one puncture and $\gamma_1\cdots \gamma_n$ is homotopic to $\partial D^2$. Furthermore, let $q_n : M_n \to D^2$ be the genus one Lefschetz fibration over $D^2$ defined by the monodromy representation
\begin{align*}
	\phi_n : \pi_1(D^2 \setminus n \text{ points}) &\to \SL_2\ZZ \\
	\gamma_{2i+1} &\mapsto \begin{pmatrix} 1 & -1 \\ 0 & 1 \end{pmatrix}  \\
	\gamma_{2i} &\mapsto \begin{pmatrix} 1 & 0 \\ 1 & 1 \end{pmatrix}.
\end{align*}
Part \ref{item:virt-univ-class} of \Cref{thm:finite-classification}, to be proved in the next section, states that for any genus one Lefschetz fibration $\pi : M \to D^2$ such that $[B_n : \Br(\pi)] < \infty$ either:
\begin{enumerate}
	\item The monodromy group of $\pi$ is abelian, and $\phi_\pi(\gamma_i) = \phi_\pi(\gamma_j)$ for all $i,j$.
	\item The number of singular fibers $n$ is at most $4$ and there exist diffeomorphisms $F : M \to M_n$ and $f : D^2 \to D^2$ such that $\rest{F}{\pi^{-1}(\partial D^2)} = \Id$ so that
		\begin{center}
			\begin{tikzcd}
				M \ar[d,"\pi"'] \ar[r,"F"] & M_n \ar[d,"q_n"]\\
				D^2 \ar[r,"f"] & D^2
			\end{tikzcd}
		\end{center}
		commutes. Equivalently, due to a result of Moishezon, $n \leq 4$ and $\phi_\pi$ lies in the same Hurwitz orbit as $\phi_n$ (see \cite[Theorem~2.1]{how-large}).
\end{enumerate}
We now focus on the examples $q_2,q_3,$ and $q_4$. The Hurwitz orbits of $\phi_3$ and $\phi_4$, computed in Sage/Python, are displayed in \Cref{fig:braid-orbits}. The orbit for $\phi_2$ is simply a triangle.
\begin{figure}
	\centering
	\subfloat[$n=3$]{
		\centering
		\includegraphics[scale=0.45]{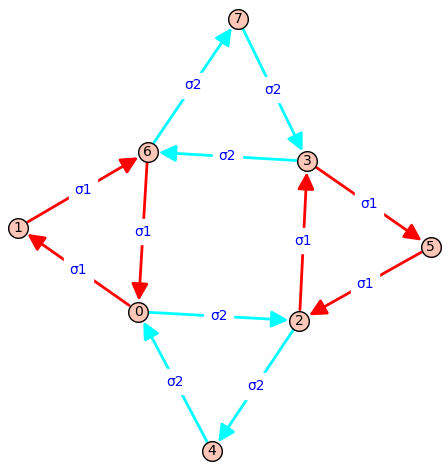}
	}
	\hspace{0.1in}
	\subfloat[$n=4$]{
		\centering
		\includegraphics[scale=0.3]{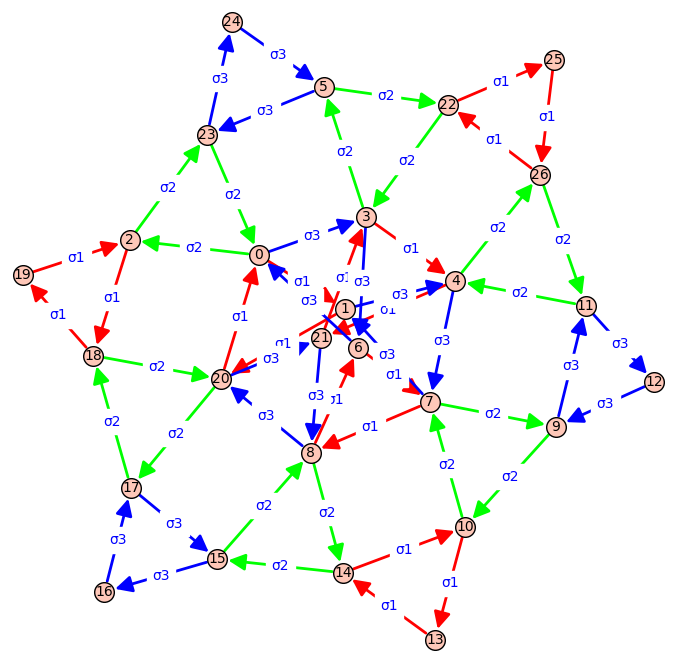}
	}\\
	\subfloat[Center of $n=4$]{
		\centering
		\includegraphics[scale=0.3]{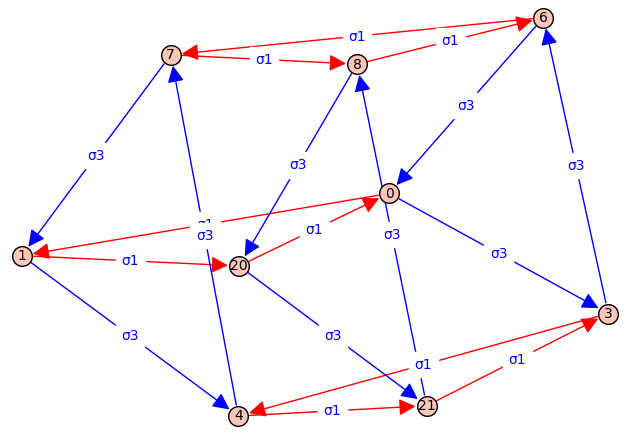}
	}
	\caption{Hurwitz orbits of $\phi_n$ for $n = 3$ and $4$ under the action of $B_n$. Vertex $0$ is $\phi_{n}$, and missing edges indicate representations fixed by the corresponding $\sigma_i$. When $n=3$ the orbit has size $8$, and when $n = 4$ it has size $27$. Figure (C) is the center portion of Figure (B).}
	\label{fig:braid-orbits}
\end{figure}
An examination of \Cref{fig:braid-orbits} reveals that $\sigma_1^3$ fixes each point in the Hurwitz orbit, and thus $\langle \langle \sigma_1^3 \rangle  \rangle < \Core(\Br(q_n))$. In fact, we have the following, which forms part \ref{item:virt-univ-cox} of \Cref{thm:finite-classification}.
\begin{proposition}\label{prop:core-cubes}
	Let $n \in \{2,3,4\}$, then
	\begin{align*}
		\Core(\Br(q_n)) = \langle \langle \sigma_1^3 \rangle  \rangle,
	\end{align*}
	where $\langle \langle \sigma_1^3 \rangle  \rangle$ denotes the normal closure of a single half-twist cubed.
\end{proposition}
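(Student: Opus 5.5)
The plan is to realize $\Core(\Br(q_n))$ as the kernel of the permutation action of $B_n$ on the finite Hurwitz orbit $\mathcal{O}_n \coloneqq B_n \cdot \phi_n$. By Moishezon's theorem, $\Br(q_n)$ is the stabilizer of $\phi_n$ under the Hurwitz action (here $B = D^2$ and the base point lies on $\partial D^2$, so no conjugation quotient is taken). Hence
\[
	\Core(\Br(q_n)) = \bigcap_{g \in B_n} \Stab(g \cdot \phi_n) = \ker\bigl(B_n \to \operatorname{Sym}(\mathcal{O}_n)\bigr).
\]
The inclusion $\langle\langle \sigma_1^3 \rangle\rangle \subseteq \Core(\Br(q_n))$ is already visible from \Cref{fig:braid-orbits}, since $\sigma_1^3$ fixes every vertex. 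To make this rigorous for all half-twists simultaneously, I would argue directly rather than from the picture. Every point of $\mathcal{O}_n$ sends $\gamma_1, \gamma_2$ to a pair of Dehn twists $T_a, T_b$ with $i(a,b) \in \{0,1\}$; this follows from the orbit computation, or from the lemma of \cite[Lemma 5.1]{how-large}. If $i(a,b)=0$, then $\sigma_1$ already fixes the pair. If $i(a,b)=1$, then the braid relation $T_aT_bT_a = T_bT_aT_b$ shows that $\sigma_1$ acts on $(T_a,T_b)$ with period $3$. So $\sigma_1^3$ fixes every point of the orbit, and hence lies in the normal subgroup $\Core$; its normal closure is then contained in $\Core$ as well.

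For the reverse inclusion, I would compare orders. Since $\Core(\Br(q_n))$ is the kernel of the orbit action, $B_n/\Core(\Br(q_n))$ is the permutation group $G_n$ induced on $\mathcal{O}_n$, which has $3$, $8$ and $27$ points for $n = 2, 3, 4$. On the other hand, Coxeter's theorem (\Cref{thm:coxeter}) identifies $B_n/\langle\langle\sigma_1^3\rangle\rangle$ for $n = 3, 4$ as a finite group of order $24$ and $648$ respectively. For $n = 2$ the quotient is $\ZZ/3$ directly. The inclusion above gives a surjection
\[
	B_n/\langle\langle\sigma_1^3\rangle\rangle \twoheadrightarrow G_n,
\]
so it suffices to show $\abs{G_n} \geq 24$ and $\abs{G_n} \geq 648$ respectively. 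For $n=2$ the action on the triangle is by a $3$-cycle, which gives $\abs{G_2} = 3$ immediately.

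The main obstacle is computing $\abs{G_n}$. I would do this with the Sage code \eqref{eq:code}: generate the permutations induced by $\sigma_1, \ldots, \sigma_{n-1}$ on the orbit and compute the order of the group they generate. As a sanity check that can be done partly by hand, $G_n$ is transitive, so $\abs{G_n} = \abs{\mathcal{O}_n} \cdot \abs{\Stab_{G_n}(\phi_n)}$. For $n = 3$ this requires a point stabilizer of order $3$. For $n = 4$ it requires a stabilizer of order $24$, which should arise from the image of $\Br(q_4)$ acting on the remaining $26$ points. I expect to exhibit enough explicit elements (such as $\sigma_2$-type braids fixing vertex $0$) to generate a stabilizer of the required order.

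A more conceptual alternative for $n = 4$ would be to identify $G_4$ with the action of $\Sp_4(\mathbb{F}_3)$-type Burau images on $\mathbb{F}_3^3$, but I would defer that to the later section. For the proposition itself, I would rely on the finite computation combined with Coxeter's order formula.
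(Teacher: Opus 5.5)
Your proposal is correct and follows essentially the same route as the paper. Both arguments identify $\Core(\Br(q_n))$ with the kernel of the $B_n$-action on the finite Hurwitz orbit, bound the index using Coxeter's order formula for $B_n/\langle\langle \sigma_1^3 \rangle\rangle$, and check by computer that the stabilizer of $\phi_4$ in $B_4/\Core(\Br(q_4))$ has order $24$.

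Your braid-relation argument for $\sigma_1^3 \in \Core(\Br(q_n))$ makes rigorous what the paper reads off \Cref{fig:braid-orbits}. For $n=3$ the paper avoids computation: $[\Br(q_3):\Core(\Br(q_3))]$ divides $24/8=3$, and it is not $1$ because $\Br(q_3)$ is proper yet contains the half-twist $\sigma_1^{-1}\sigma_2\sigma_1$, so it cannot be normal.
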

One can prove \Cref{prop:core-cubes} via a direct computation in GAP by giving generators for $\Br(q_n)$ when $n \leq 4$ as in \cite[Proposition 4.2]{how-large}. The groups $\langle \langle \sigma_1^3 \rangle  \rangle$ appear also in the following theorem of Coxeter.
\begin{theorem}[Coxeter, {\cite{coxeter-factor-braid}}]\label{thm:coxeter}
	Let $n,k \geq 3$. The factor group $B_n(k) \coloneqq B_n/\langle \langle \sigma_1^k \rangle  \rangle$ is finite if and only if there is a regular polyhedron consisting of regular $n$-gons arranged with $k$ faces meeting at a vertex. In particular, the only finite cases are the Schl\"{a}fli symbols $\{n,k\} = \{3,3\}, \{3,4\}, \{3,5\}, \{4,3\}, \{5,3\}$ of the platonic solids. Furthermore, in these cases
	\begin{align*}
		\abs{B_n(k)} = \left(\frac{f_{n,k}}{2}\right)^{n-1}n!
	\end{align*}
	where $f_{n,k}$ is the number of faces in the aforementioned regular polyhedron.
\end{theorem}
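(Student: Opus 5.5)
Since \Cref{thm:coxeter} is classical and is quoted from \cite{coxeter-factor-braid}, we only sketch how we would reprove it. The strategy is to realize $B_n(k)$ through the reduced Burau representation specialized at a root of unity. The finite cases are then identified with complex reflection groups, and in the infinite cases an infinite quotient is exhibited directly. The guiding identity is that a regular polyhedron $\{n,k\}$ exists exactly when $\frac{1}{n}+\frac{1}{k}>\frac12$. This is also the condition for the Hermitian form preserved by the specialized Burau representation to be positive definite.

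\textbf{Step 1: the base case $n=3$.} Recall that $B_3/Z(B_3)\cong \PSL_2\ZZ\cong \ZZ/2\ZZ * \ZZ/3\ZZ$, generated by the images of $\sigma_1\sigma_2\sigma_1$ and $\sigma_1\sigma_2$. Killing $\sigma_1^k$ yields the triangle group $\Delta(2,3,k)$, up to a central cyclic extension by the image of the center. Since $Z(B_3)=\langle (\sigma_1\sigma_2)^3\rangle$ and $\sigma_1^k=1$, the image of the center has finite order when $\Delta(2,3,k)$ is finite. Hence $B_3(k)$ is finite if and only if $\frac12+\frac13+\frac1k>1$, that is, $k\le 5$.

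\textbf{Step 2: the finite cases.} For $(n,k)$ in the list, we would use the Burau representation $\beta_q$ with $q=-e^{2\pi i/k}$, chosen so that each $\sigma_i$ maps to a complex reflection of order $k$. This representation preserves a Hermitian form (Squier). When $\frac1n+\frac1k>\frac12$ the form is definite, so the image is a finite complex reflection group; one checks it against the Shephard--Todd list (for example, $B_3(3)\to G_4$, $B_3(4)\to G_8$, $B_4(3)\to G_{25}$). Since each relation $\sigma_1^k=1$ holds in the image, this gives surjections $B_n(k)\to G$. To prove these are isomorphisms, we would run a Todd--Coxeter coset enumeration on the presentation of $B_n(k)$ and show $|B_n(k)|\le |G|$. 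The bound should match $\bigl(\tfrac{f_{n,k}}{2}\bigr)^{n-1}n!$. For instance, $\{3,3\}$ gives $2^2\cdot 6=24=|\SL_2\mathbb{F}_3|$ and $\{4,3\}$ gives $3^3\cdot 24=648=|G_{25}|$.

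\textbf{Step 3: the infinite cases, the main obstacle.} We need an infinite quotient of $B_n(k)$ whenever $\frac1n+\frac1k\le \frac12$. First try the specialized Burau image at $q=-e^{2\pi i/k}$. When the Hermitian form is degenerate, it should act as an affine (Euclidean) reflection group. When the form has mixed signature, it should act as a lattice in $U(p,1)$ or a similar group, and should be infinite by a Galois-conjugate or discreteness argument. The borderline cases $\{3,6\},\{4,4\},\{6,3\}$ correspond to degenerate forms and affine groups, and need a separate check. For $n=3$, Step 1 already settles this. For larger $n$ we would reduce to the borderline cases via the surjections induced by the standard inclusions $B_m\hookrightarrow B_n$ on Burau images, where the Hermitian form restricts. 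Controlling the signature uniformly and proving infiniteness rigorously is the step we expect to be hardest.
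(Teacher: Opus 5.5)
The paper gives no proof of this statement. It is quoted from Coxeter, and only the orders $|B_3(3)|=24$ and $|B_4(3)|=648$ are used, in the proof of \Cref{prop:core-cubes}.

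For that use, your Step~2 suffices by itself. A completed Todd--Coxeter enumeration of $\langle \sigma_1,\ldots,\sigma_{n-1}\mid \text{braid relations},\ \sigma_1^k\rangle$ returns $|B_n(k)|$ exactly, and the Burau image only serves to name the group.

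Two inferences in Steps~1--2 need repair. First, ``the form is definite, so the image is finite'' is not valid. Definiteness at one complex embedding only puts the image in a compact group. You need definiteness at every Galois conjugate of $q$ (the image has cyclotomic-integer entries, hence is discrete in the product over all embeddings), or else the enumeration. Second, in Step~1 the finiteness of the image $C$ of $Z(B_3)$ in $B_3(k)$ needs an argument. One option is the transfer $B_3(k)\to C$, which is $c\mapsto c^{|\Delta(2,3,k)|}$ on $C$ and factors through $B_3(k)^{ab}\cong\ZZ/k\ZZ$.

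The genuine gap is Step~3, the ``only if'' direction for $n\ge 4$, and the tools you name would not close it. Discreteness and Galois conjugates are how one proves a group is \emph{finite}; a discrete group may well be finite.

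The missing certificate is Schur's lemma. For $q=-e^{2\pi i/k}$ the reduced Burau representation $\beta_q$ of $B_n$ is irreducible unless $q^n=1$. An irreducible representation preserving a nondegenerate indefinite Hermitian form has infinite image. This is because a finite group preserves a positive definite form, and the invariant Hermitian forms of an irreducible representation are real multiples of each other.

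The condition $q^n=1$, i.e.\ $2k\mid n(k+2)$, is exactly where Squier's form degenerates. It holds not only at $(3,6),(4,4),(6,3)$ but also at non-borderline parameters such as $(8,4)$, so ``mixed signature implies infinite'' cannot be applied uniformly. At a degenerate parameter the radical is invariant and meets no root line, so every $\beta_q(\sigma_i)$ fixes it pointwise. Since the roots $e_1,\ldots,e_{n-1}$ span, an invariant complement would contain all of them. Hence $\beta_q$ is not semisimple, and its image is infinite by Maschke's theorem.

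Finally, the reduction through $B_m\hookrightarrow B_n$ must land on the minimal $n$ for each $k$, not only on the borderline cases. These minimal cases are $(6,3)$, $(4,4)$, $(4,5)$ and $(3,k)$ for every $k\ge 6$. For $(3,k)$, Step~1 is the wrong input: restriction needs $\beta_q(B_3)$ to be infinite, whereas Step~1 only shows that the abstract group $B_3(k)$ is infinite. The two arguments above supply what is needed: Schur's lemma for $k\ge 7$, and non-semisimplicity for $k=6$.
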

Equipped with \Cref{thm:coxeter} we give an alternate proof of \Cref{prop:core-cubes}, then we discuss the relationship of \Cref{prop:core-cubes} and \Cref{thm:coxeter} to the platonic solids (see \Cref{rem:coxeter-platonic}). When $n = 2$, we know that $B_2 \cong \ZZ$ is abelian and so $\Core(\Br(q_2)) = \Br(q_2) = \langle \langle \sigma_1^3 \rangle  \rangle$.
\begin{proof}[Proof of \Cref{prop:core-cubes} when $n = 3$]
	First note that $\abs{B_3(3)} = 2^2 \cdot 3! = 24$ by Coxeter's formula in \Cref{thm:coxeter}, and so
	\begin{align*}
		24 = [B_3 : \langle \langle \sigma_1^3 \rangle  \rangle] = [B_3 : \Br({q_3})][\Br({q_3}) : \Core(\Br({q_3}))][\Core(\Br({q_3})) : \langle \langle \sigma_1^3 \rangle  \rangle ].
	\end{align*}
	Since $\Br({q_3}) \neq B_3$ and it contains $\sigma_1^{-1}\sigma_2\sigma_1$, it cannot be normal, hence $[\Br({q_3}) : \Core(\Br({q_3}))] \neq 1$. Then, because $[B_3 : \Br({q_3})] = 8$, this implies that $[\Br({q_3}) : \Core(\Br({q_3}))] = 3$ and $\Core(\Br({q_3})) = \langle \langle \sigma_1^3 \rangle  \rangle$.
\end{proof}

\begin{proof}[Proof of \Cref{prop:core-cubes} when $n = 4$]
	We find that $\abs{B_4(3)} = 3^3 \cdot 4! = 27 \cdot 24$ by Coxeter's formula, and so, as before
	\begin{align*}
		27 \cdot 24 = [B_4 : \langle \langle \sigma_1^3 \rangle  \rangle] = [B_4 : \Br({q_4})][\Br({q_4}) : \Core(\Br({q_4}))][\Core(\Br({q_4})) : \langle \langle \sigma_1^3 \rangle  \rangle ].
	\end{align*}
	However, unlike the previous case when $n = 3$ one must explicitly compute the index $[\Br({q_4}) : \Core(\Br({q_4}))]$. It suffices to show the index is $24$ as $[B_4 : \Br({q_4})] = 27$. To do so, notice that $\Core(\Br({q_4}))$ is the kernel of the map $B_4 \to \Perm(\mathcal{O}_4)$, where $\mathcal{O}_4$ is the Hurwitz orbit of $\phi_4$ and $\Perm(\mathcal{O}_4)$ is the symmetric group on the set $\mathcal{O}_4$. Put another way, if $\mathcal{O}_4 = \{\rho_{4,1},\ldots,\rho_{4,27}\}$, then $\Core(\Br({q_4}))$ is the stabilizer of the point $(\rho_{4,1},\ldots,\rho_{4,27})$ under the action of $B_4$ on $\mathcal{O}_4^{27}$ componentwise. Hence, it suffices to compute the size of the orbit of $(\rho_{4,1},\ldots,\rho_{4,27})$ under $\Br({q_4})$. The computation is done in Sage/Python as before and verifies that the orbit has size 24. Hence $[\Br(q_4) : \Core(\Br(q_4))] = 24$ and $\Core(\Br(q_4)) = \langle \langle \sigma_1^3 \rangle  \rangle$ as desired.
\end{proof}

\begin{remark}\label{rem:coxeter-platonic}
	\Cref{prop:core-cubes} and \Cref{thm:coxeter} suggest a modular relationship between elliptic fibrations and the platonic solids. However, that observation only scratches their surface. For example, $B_4/\langle \langle \sigma_1^3 \rangle  \rangle$ is identified with the action of the Hessian group on the 27 lines of the Hesse cubic pencil. Similarly, note that the Hurwitz orbit of $\phi_4$ consists of 27 distinct points. In fact, the image of $\Br(q_4)$ under the map $B_4 \to B_4/\Core(\Br(q_4))$ is the precise subgroup of index 27 inducing the action on the 27 lines which is identified by Coxeter \cite[Section 4]{coxeter-factor-braid}. Thus far, a modular interpretation of this relationship between genus one fibrations, Coxeter's factor groups, and platonic solids has eluded the author. For more on this phenomenon, see Lam--Landesman--Litt's paper on finite braid group orbits on $\SL_2$-character varieties and complex reflection groups \cite{lam-finite}.
\end{remark}
A notable feature of \Cref{prop:core-cubes} and the classification given in \Cref{thm:finite-classification} is the absence of the case when $n = 5$. In Coxeter's factor groups, the case $n = 5$ corresponds to the regular dodecahedron. We attempt to remedy the absence of the dodecahedron in \Cref{sec:branched-covers} by reducing modulo 2, noting that for $n = 2,3,4$ the structure of the Hurwitz orbits of $q_n$ remains unchanged after reduction modulo 2. Our remedy nearly succeeds in reproducing $B_5(3)$, but unfortunately it gives rise to a whole infinite family of finite-index subgroups in $B_n$. In fact, a direct computation with the group $\Core(\Br(q_5[2]))$ in \Cref{thm:symp-iso-intro} analogous to those given above shows it contains $\langle \langle \sigma_1^3 \rangle  \rangle$ with index $3$.

\subsection{Classification of fibrations with virtually universal braid group}\label{subsec:fin-class}

In this section, we prove part \ref{item:virt-univ-class} of \Cref{thm:finite-classification}. The techniques are similar to those presented in \cite[Section 5]{how-large}, and we adopt the notation of a Moishezon spider which we introduced there.
\begin{defn}[\textbf{Moishezon spider}, see {\cite[Definition 5.1]{how-large}}]\label{defn:spider}
	Let $\pi : M \to B$ be a Lefschetz fibration of genus $g$ over a compact connected oriented base $B$ with singular locus $\Delta_\pi$. A \emph{Moishezon spider} for $\pi$ consists of the following data
	\begin{enumerate}
		\item An embedded disk $D$ in $B$, so that $\partial D \cap \Delta_\pi = \0$.
		\item A basepoint $b \in \partial D$, where if $B$ has boundary we require $b \in \partial D \cap \partial B$.
		\item A collection of arcs $a_1,\ldots,a_k$ in $D$ beginning at $b$ and ending at distinct singular values $p_1,\ldots,p_k \in \operatorname{int}(D) \cap \Delta_\pi$ of $\pi$, disjoint from $\partial D$ except at $b$. The arcs $a_1,\ldots,a_k$ are ordered counterclockwise by how they meet a sufficiently small neighborhood of $b$.
	\end{enumerate}
	The \textit{topological type} of a Moishezon spider is the collection of vanishing cycles $(\de_1,\ldots,\de_k)$ associated to $a_1,\ldots,a_k$ via $\pi$ in the fiber $\Sigma_g \cong \pi^{-1}(b)$ and is well-defined up to the action of $\Mod(\Sigma_g)$. The arcs $a_1,\ldots,a_k$ are called the \textit{legs of the spider}. A subset $I \subseteq \{1,\ldots,k\}$ defines a \textit{subspider} $(a_i)_{i \in I}$.
\end{defn}
Let $\pi : M \to D^2$ be an arbitrary genus one Lefschetz fibration with $n$ nodal fibers, and denote its monodromy representation by $\phi_\pi$. Note first that $\Br(\pi) = \Stab \phi_\pi$ and so the index is the size of the Hurwitz orbit by Moishezon's theorem (see \cite[Theorem~2.1]{how-large}). Furthermore, for any embedded disk $D' \subseteq D^2$ which shares the basepoint $b \in \partial D^2 \cap \partial D'$, we have that the Hurwitz orbit for $\rest{\pi}{\pi^{-1}(D')}$ is contained in the Hurwitz orbit for $\pi$. Thus, when proving the index is infinite, it suffices to do so on an embedded disk. Likewise, the Hurwitz action can be computed in terms of Moishezon spiders. Let $(\ell_1,\ell_2)$ be a Moishezon spider with vanishing cycles $(\nu_1,\nu_2)$. On the tubular neighborhood of $\ell_1 \cup \ell_2$ the half-twist $\sigma$ acts on the representation by
\begin{align*}
	(T_{\nu_1},T_{\nu_2}) \xmapsto{\sigma} (T_{\nu_1}T_{\nu_2}T_{\nu_1}^{-1}, T_{\nu_1}).
\end{align*}
On the level of vanishing cycles, $\sigma$ thus takes
\begin{align*}
	(\nu_1,\nu_2) \mapsto (T_{\nu_1} \nu_2, \nu_1).
\end{align*}

In light of these observations, the proof of part \ref{item:virt-univ-class} of \Cref{thm:finite-classification} begins with the following lemma.
\begin{lemma}[{\cite[Lemma 5.1]{how-large}}]\label{lemma:int2-inf}
	Let $\pi$ be a genus one Lefschetz fibration over $D^2$ and suppose $\pi$ admits a Moishezon spider of topological type $(\gamma,\de)$, where $i(\gamma,\de) \geq 2$. Then the Hurwitz orbit of $\phi_\pi$ is infinite and so $[B_n : \Br(\pi)] = \infty$.
\end{lemma}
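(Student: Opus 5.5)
The plan is to restrict to the two-legged subspider $(\ell_1,\ell_2)$ of topological type $(\gamma,\de)$. It cuts out an embedded disk $D'\subseteq D^2$ that shares the basepoint $b$ and contains exactly two singular values. As observed above, the Hurwitz orbit of $\rest{\pi}{\pi^{-1}(D')}$ embeds in the Hurwitz orbit of $\phi_\pi$. It therefore suffices to show that the orbit of the pair $(T_\gamma,T_\de)$ under $B_2=\langle\sigma\rangle$ is infinite. Since $\Br(\pi)=\Stab\phi_\pi$ by Moishezon's theorem (see \cite[Theorem~2.1]{how-large}), the index $[B_n:\Br(\pi)]$ is the size of this orbit, so its infiniteness gives $[B_n:\Br(\pi)]=\infty$.

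First we compute the square of the generator. Applying the formula $(T_{\nu_1},T_{\nu_2})\mapsto(T_{\nu_1}T_{\nu_2}T_{\nu_1}^{-1},T_{\nu_1})$ twice gives
\[
\sigma^2\cdot(T_\gamma,T_\de)=\bigl(P\,T_\gamma P^{-1},\;P\,T_\de P^{-1}\bigr),\qquad P=T_\gamma T_\de .
\]
Consequently $\sigma^{2k}\cdot(T_\gamma,T_\de)=(P^kT_\gamma P^{-k},\,P^kT_\de P^{-k})$. The first entry is the twist about the curve $P^k\gamma$. It remains to show that the curves $P^k\gamma$ are pairwise distinct as isotopy classes, up to the orientation sign on $\gamma$, which does not affect the twist.

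Now we use $\SL_2\ZZ$. Set $m=i(\gamma,\de)\geq 2$ and choose a basis with $\gamma=(1,0)$ and $\de=(a,m)$. The two twists are then conjugate to $\begin{psmallmatrix}1&\mp1\\0&1\end{psmallmatrix}$ and to its conjugate along $\de$. A direct trace computation, using the standard identity $\tr(T_\gamma T_\de)=2\pm i(\gamma,\de)^2$ for transvections, gives $\abs{\tr P}=\abs{2\pm m^2}$. For the relevant sign this is at least $2+m^2\geq 6$; for the other sign it is $\abs{2-m^2}$, which is at least $2$ when $m\geq2$, with equality only when $m=2$. In the generic case $\abs{\tr P}>2$, so $P$ is hyperbolic. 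Its powers then act on primitive vectors with no finite orbit except eigenvectors, and these are irrational, so the vectors $P^k\gamma$ are pairwise distinct (even up to sign). Hence the orbit is infinite.

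The main obstacle is the sign bookkeeping in the borderline case $m=2$ with $\tr P=-2$, where $P$ is $-1$ times a parabolic. I would handle it by showing that $P$ is not $\pm\Id$: this holds because $\gamma$ and $\de$ are not equal up to sign, so $T_\gamma\neq T_\de^{-1}$. Then $P=-U$ with $U$ a nontrivial unipotent. Its eigenvector is fixed, while any other primitive vector, in particular $\gamma$ (which is not $P$-invariant since $m\neq0$), has infinite orbit under $U^k$. So again the $P^k\gamma$ are infinitely many distinct curves up to sign. This completes the plan.
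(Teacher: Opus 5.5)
Your proof is correct. The paper does not reprove this lemma; it is quoted from \cite[Lemma 5.1]{how-large}, so there is no in-paper argument to match. Your reduction to a two-point subdisk and your use of the half-twist formula are exactly the tools the paper records just before the statement. The key observation is that $\sigma^2$ acts by simultaneous conjugation by the boundary monodromy $P=T_\gamma T_\de$. This reduces everything to showing that no nonzero power of $P$ commutes with $T_\gamma$.

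You finish with an explicit $\SL_2\ZZ$ trace classification. A more uniform finish uses the standard ping-pong fact that $T_\gamma$ and $T_\de$ freely generate a free group when $i(\gamma,\de)\geq 2$. Then the centralizer of $T_\gamma$ in $\langle T_\gamma,T_\de\rangle$ is $\langle T_\gamma\rangle$, which contains no $P^k$ with $k\neq 0$. That route needs no case split and works in any genus. Yours is more elementary and tells you exactly what $P$ is.

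There are two small slips in your bookkeeping; neither affects validity, since you treated both signs.

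\begin{itemize}
\item For twists of the same handedness, $\tr(T_\gamma T_\de)=2-i(\gamma,\de)^2$, so the relevant sign is the minus sign, not the plus sign. Thus $m\geq 3$ is the hyperbolic case, and $m=2$ \emph{always} gives $\tr P=-2$. Indeed $P=-T_\epsilon^{-4}$ for the curve $\epsilon$ meeting each of $\gamma,\de$ once. So your ``borderline'' case is not an exceptional possibility; it carries all of $m=2$.
\item In that case, $T_\gamma\neq T_\de^{-1}$ only rules out $P=\Id$, which the trace already excludes. To rule out $P=-\Id$, use instead that $-T_\de^{-1}$ has trace $-2\neq 2=\tr T_\gamma$.
\end{itemize}
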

Suppose then that $\Br(\pi)$ were finite-index. As a consequence, given \textit{any} complete Moishezon spider $(\ell_1,\ldots,\ell_n)$ for $\pi$, the vanishing cycles $(\nu_1,\ldots,\nu_n)$ must all lie on some \textit{Farey triangle} (i.e., a triangle in the Farey complex, see \Cref{fig:farey}). In fact, we show that there \textit{exists} a complete Moishezon spider so that all the vanishing cycles lie on a Farey edge in \Cref{lemma:all-edge} below.

Let $\alpha,\beta,\gamma$ be a Farey triangle (with vertices ordered by the counterclockwise orientation). Let $\alpha,\beta,\de$ be the opposite triangle (which shares the edge along $\alpha,\beta$). Concretely, write
\begin{align*}
	\alpha = \pm\begin{pmatrix} 1 \\ 0 \end{pmatrix} && \beta = \pm\begin{pmatrix} 0 \\ 1 \end{pmatrix} && \gamma = \pm\begin{pmatrix} 1 \\ 1 \end{pmatrix} && \de = \pm\begin{pmatrix} 1 \\ -1 \end{pmatrix}.
\end{align*}
Note that $T_\alpha \beta = \de$ and $T_\beta \alpha = \gamma$; this relation is what orients triples of curves clockwise or counterclockwise.

\begin{figure}
	\centering
	\includestandalone[scale=1.2]{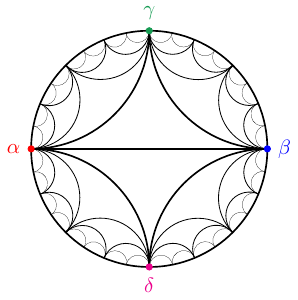}
	\caption{The Farey complex with a counterclockwise triangle $(\alpha,\beta,\gamma)$ and a clockwise triangle $(\alpha,\beta,\de)$. Recall that the Farey complex's vertices are simple closed curves in $T^2$ with edges between any two simple closed curves with geometric intersection number $1$.
	}
	\label{fig:farey}
\end{figure}

\begin{lemma}\label{lemma:clockwise}
	Let $\pi$ be a genus one Lefschetz fibration over $D^2$ and suppose $\pi$ admits a Moishezon spider of topological type $(\alpha,\beta,\gamma)$. Then $[B_n : \Br(\pi)] = \infty$.
\end{lemma}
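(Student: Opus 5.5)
The plan is to produce, by a single Hurwitz move, a Moishezon spider whose topological type contains two curves with geometric intersection number $2$, and then invoke \Cref{lemma:int2-inf}. We may restrict to the embedded disk $D$ of the given spider of type $(\alpha,\beta,\gamma)$: as observed above, the Hurwitz orbit of $\rest{\pi}{\pi^{-1}(D)}$ sits inside the Hurwitz orbit of $\phi_\pi$, so it suffices to show the former is infinite. Likewise, any subspider of a spider obtained from $(\alpha,\beta,\gamma)$ by Hurwitz moves is again a Moishezon spider for $\pi$, so it is enough to exhibit two legs whose vanishing cycles have $i \geq 2$.

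First I fix the sign convention for Dehn twists on $H_1(T^2)$ from the normalizations $T_\alpha\beta = \de$ and $T_\beta\alpha = \gamma$. With $\alpha = (1,0)^T$ and $\beta = (0,1)^T$, both relations are consistent with
\[
T_x(v) = v - \det(x,v)\,x \quad \text{(up to sign)}, \qquad \text{using } \det(\alpha,\beta) = 1 .
\]
Geometric intersection numbers of primitive classes are $\abs{\det}$.

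Next I apply the half-twist $\sigma_2$ to the legs carrying $(\beta,\gamma)$. By the formula $(\nu_1,\nu_2) \mapsto (T_{\nu_1}\nu_2,\nu_1)$, the topological type becomes $(\alpha, T_\beta\gamma, \beta)$. Since $\det(\beta,\gamma) = -1$, we get $T_\beta\gamma = \pm(1,2)^T$. Then $i(\alpha, T_\beta\gamma) = \abs{\det\begin{psmallmatrix} 1 & 1 \\ 0 & 2\end{psmallmatrix}} = 2$, so the subspider on the first two legs has type $(\alpha,T_\beta\gamma)$ with intersection number $2$. \Cref{lemma:int2-inf} then gives an infinite Hurwitz orbit, hence $[B_n:\Br(\pi)] = \infty$.

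As a cross-check, I would also apply $\sigma_1$ to $(\alpha,\beta)$. This gives type $(\de,\alpha,\gamma)$, and $i(\de,\gamma) = \abs{\det\begin{psmallmatrix} 1 & 1 \\ -1 & 1\end{psmallmatrix}} = 2$, so the first and third legs would serve just as well.

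The main obstacle is orientation bookkeeping. One must check that the counterclockwise ordering of the legs, together with the left-handed twist convention, really produces $T_\beta\gamma = \beta + \gamma$ rather than $\gamma - \beta = \alpha$. In the latter case the move would stay inside the Farey triangle, which is exactly what should happen for the clockwise triangle $(\alpha,\beta,\de)$. I would verify this against the stated relations $T_\alpha\beta = \de$ and $T_\beta\alpha = \gamma$ as above. If a sign turned out reversed, I would instead use $\sigma_2^{-1}$ or the $\sigma_1$ move, and check which one leaves the triangle.
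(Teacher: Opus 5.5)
Your proposal is correct and is essentially the paper's argument: a single Hurwitz move followed by \Cref{lemma:int2-inf}. The paper uses exactly your ``cross-check'', applying $\sigma_1$ to get $(\de,\alpha,\gamma)$ with $i(\de,\gamma)=2$. Your main move, applying $\sigma_2$ to get $(\alpha,T_\beta\gamma,\beta)$ with $T_\beta\gamma=\pm(1,2)^T$ and $i(\alpha,T_\beta\gamma)=2$, works equally well, because the normalizations $T_\alpha\beta=\de$ and $T_\beta\alpha=\gamma$ do force $T_\beta=\begin{psmallmatrix}1&0\\1&1\end{psmallmatrix}$.
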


\begin{proof}
	Suppose the Moishezon spider from the assumption is given by arcs $\ell_1,\ell_2,\ell_3$ beginning at $b \in \partial D^2$. We then perform a local calculation. Let $\sigma_1,\sigma_2$ be the half-twists supported on $\ell_1 \cup \ell_2$ and $\ell_2 \cup \ell_3$, then:
	\begin{align*}
		(\alpha,\beta,\gamma) \xmapsto{\sigma_1} (T_\alpha \beta, \alpha, \gamma).
	\end{align*}
	Now note that $i(T_\alpha \beta, \gamma) = 2$, and so \Cref{lemma:int2-inf} applies.
\end{proof}

\begin{lemma}\label{lemma:all-edge}
	Let $\pi$ be a genus one Lefschetz fibration over $D^2$ with $n$ nodal fibers such that $[B_n : \Br(\pi)] < \infty$. Then there is a complete Moishezon spider for $\pi$ with topological type $(\nu_1,\ldots,\nu_n)$ where $\nu_i \in \{\alpha,\beta\}$.
\end{lemma}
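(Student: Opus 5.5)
Fix any complete Moishezon spider $(\ell_1,\ldots,\ell_n)$ for $\pi$, with vanishing cycles $(\nu_1,\ldots,\nu_n)$. Since the index is finite, \Cref{lemma:int2-inf} applied to every two-legged subspider gives $i(\nu_i,\nu_j)\le 1$ for all $i,j$. So the distinct cycles pairwise intersect at most once and span a clique in the Farey complex, hence lie on a single Farey triangle. Up to the $\SL_2\ZZ$-action on the fiber, the distinct values among the $\nu_i$ form a subset of one triangle, which we may take to be $\{\alpha,\beta,\gamma\}$ or $\{\alpha,\beta,\de\}$. If at most two distinct curves occur we are done, since any Farey edge can be moved to $\{\alpha,\beta\}$. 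So assume all three vertices of some triangle occur. The plan is to rule this out by applying Hurwitz moves that keep all cycles on the triangle and eventually produce a three-legged subspider of topological type $(\alpha,\beta,\gamma)$, i.e.\ counterclockwise, which \Cref{lemma:clockwise} forbids.

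The key local computations use the action $(\nu_1,\nu_2)\mapsto(T_{\nu_1}\nu_2,\nu_1)$ on adjacent legs. If $\nu_i=\nu_{i+1}$, then $\sigma_i$ swaps them trivially. If $\nu_i,\nu_{i+1}$ are distinct, then $T_{\nu_i}\nu_{i+1}$ is the third vertex of one of the two triangles on that edge. We restrict to moves that stay within the triangle present, or we immediately get a pair with intersection number $2$, which contradicts \Cref{lemma:int2-inf}. In particular, any three consecutive legs carrying three distinct cycles form either a counterclockwise or a clockwise triple. Counterclockwise contradicts \Cref{lemma:clockwise} directly. For a clockwise triple such as $(\alpha,\beta,\de)$ (or a cyclic rotation), I would check that a single half-twist $\sigma_1$ or $\sigma_2$, or its inverse, converts it into a counterclockwise triple on the same triangle. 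For example, $\sigma_1^{-1}$ sends $(\alpha,\beta,\de)$ to $(\beta,T_\beta^{-1}\alpha,\de)=(\beta,\de,\de)$-type configurations, and one tracks which resulting orderings are counterclockwise. This step requires explicit verification with $T_\alpha\beta=\de$ and $T_\beta\alpha=\gamma$. So no three consecutive legs can carry three distinct cycles.

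Globally, I would then use half-twists to bring legs together. Given legs $i<j<k$ with three distinct cycles, conjugate adjacent transpositions ($\sigma$ or $\sigma^{-1}$) move leg $k$ leftward past intermediate legs. Each passage either leaves the cycle unchanged, when the intermediate cycle equals or is moved past the same vertex, or replaces it by another vertex of the triangle, unless an intersection-$2$ pair appears, which \Cref{lemma:int2-inf} forbids. An induction on the distance $k-i$ then produces three consecutive legs with three distinct cycles, contradicting the previous paragraph.

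The main obstacle is this bookkeeping. One must show that sliding legs past each other within a triangle, choosing between $\sigma$ and $\sigma^{-1}$, always either preserves distinctness or creates a forbidden intersection. A cleaner route would be a small invariant argument. Consider a shortest window of consecutive legs containing all three vertices; its interior contains only the endpoint values. Then a half-twist at the boundary of the window shortens it, and I would verify the finitely many cases by hand, or with the Sage code \eqref{eq:code}, using the explicit vectors for $\alpha,\beta,\gamma,\de$.
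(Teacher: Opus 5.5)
Your first paragraph is fine: pairwise intersection at most one puts all cycles on one Farey triangle, and you are done if at most two distinct cycles occur. The gap is in the rest of the plan. You aim to show that all three vertices of a Farey triangle can never occur, but this is false.

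The fibration $q_3$, of type $(\alpha,\beta,\alpha)$, has $[B_3:\Br(q_3)]=8$. Yet $\sigma_2$ sends it to $(\alpha,T_\beta\alpha,\beta)=(\alpha,\gamma,\beta)$. These are three distinct cycles on consecutive legs, and they form a clockwise triple: $T_\alpha\gamma=\beta$, and applying $T_\alpha$ carries $(\alpha,\gamma,\beta)$ to type $(\alpha,\beta,\de)$. Your claim that a clockwise triple can always be converted into a counterclockwise one would therefore give $[B_3:\Br(q_3)]=\infty$. The local check you defer indeed fails. On $(\alpha,\beta,\de)$ the four moves $\sigma_1,\sigma_1^{-1},\sigma_2,\sigma_2^{-1}$ give $(\de,\alpha,\de)$, $(\beta,\de,\de)$, $(\alpha,\alpha,\beta)$ and $(\alpha,\de,\alpha)$, and none of these is counterclockwise. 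Clockwise configurations occur in finite orbits, so they cannot be excluded at all.

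The missing idea is that the lemma is existential. Instead of deriving a contradiction, you should use Hurwitz moves to eliminate one of the three curves. The paper proceeds as follows.

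\begin{enumerate}
\item Normalize $\nu_1=\alpha$ and the first cycle $\nu_i\neq\alpha$ to be $\beta$.
\item A subspider is again a spider, so \Cref{lemma:clockwise} applies to any three legs, adjacent or not. Your sliding step is therefore unnecessary.
\item The first cycle $\nu_j\notin\{\alpha,\beta\}$ is $\gamma$ or $\de$ by \Cref{lemma:int2-inf}. It must be $\de$, since $\gamma$ would make $(\nu_1,\nu_i,\nu_j)$ counterclockwise.
\item For $i<k<j$ one gets $\nu_k=\beta$, since otherwise $(\beta,\alpha,\de)$ is counterclockwise. For $k>j$ one gets $\nu_k\in\{\alpha,\de\}$, since otherwise $(\alpha,\de,\beta)$ is counterclockwise. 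So the type is $(\alpha,\ldots,\alpha,\beta,\ldots,\beta,\de,\nu_{j+1},\ldots,\nu_n)$.
\item Push the last $\alpha$ of the initial block rightward through the $\beta$-block using $(\alpha,\beta)\mapsto(T_\alpha\beta,\alpha)=(\de,\alpha)$. Every $\beta$ becomes $\de$, so only $\alpha$ and $\de$ remain.
\item A change of coordinates fixing $\alpha$ and sending $\de$ to $\beta$ finishes.
\end{enumerate}

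Your own local computations already point to this: each single move on a clockwise triple lands on a single Farey edge.
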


\begin{proof}
	Fix a complete Moishezon spider $(\nu_1,\ldots,\nu_n)$ for $\pi$. We will traverse the curves left to right. By \Cref{lemma:int2-inf}, we may assume that there are at least three distinct curves. Using the change of coordinates principle, the first curve $\nu_1$ can be sent to $\alpha$, and the second distinct curve $\nu_i \neq \nu_1$ can be identified with $\beta$. If there are no further curves, we are done.

	The third distinct curve (again moving left to right) must be $\nu_j = \de$, since otherwise the orbit would be infinite by \Cref{lemma:int2-inf} and \Cref{lemma:clockwise}. All curves $\nu_k$ for $k > j$ must have $\nu_k \in \{\de,\alpha\}$, since otherwise $\alpha, \de, \beta$ forms a counterclockwise triangle and the Hurwitz orbit is infinite by \Cref{lemma:clockwise}. By the same reasoning, for $i < k < j$ we must have that $\nu_k = \beta$. Thus, up to the action of $\SL_2\ZZ$,
	\begin{align*}
		(\nu_1,\ldots,\nu_n) = (\alpha,\ldots,\alpha,\beta,\ldots,\beta,\de,\nu_{j+1},\ldots,\nu_n),
	\end{align*}
	where $\nu_{j+1},\ldots,\nu_n \in \{\de,\alpha\}$. Applying a half-twist $\sigma$ to $(\alpha,\beta)$ yields
	\begin{align*}
		(\alpha,\beta) \xmapsto{\sigma} (T_\alpha \beta, \alpha).
	\end{align*}
	Direct computation then shows that $T_\alpha \beta = \de$. Thus, using Hurwitz moves we can transform our original spider into the form
	\begin{align*}
		(\alpha,\ldots,\alpha,\de,\ldots,\de,\alpha,\de,\nu_{j+1},\ldots,\nu_n).
	\end{align*}
	By the topological change of coordinates principle, there is a map in $\SL_2\ZZ$ fixing $\alpha$ and taking $\de$ to $\beta$. This completes the proof.
\end{proof}
Using these lemmas, we can now prove the theorem.

\begin{proof}[Proof of part \ref{item:virt-univ-class} of \Cref{thm:finite-classification}]
	Note that, for a given $n$, there are $2^n$ topological types of complete Moishezon spiders of the form given by \Cref{lemma:all-edge}. Without loss of generality, assume that the first curve is $\alpha$, bringing us down to $2^{n-1}$ topological types. By assuming the spider does not have type $(\alpha,\ldots,\alpha)$ we obtain $2^{n-1}-1$ types in total to check.

	For $n \leq 4$, if the Hurwitz orbit is finite then it is a finite check to verify that there exists a complete Moishezon spider of type $(\alpha,\beta)$, $(\alpha,\beta,\alpha)$, or $(\alpha,\beta,\alpha,\beta)$ in the Hurwitz orbit. We omit these calculations, and instead tabulate the infinite orbits.
	\begin{enumerate}
		\item[($n = 3$)] All orbits are finite.
		\item[($n = 4$)] There are two topological types with infinite orbit satisfying \Cref{lemma:all-edge}
			\begin{align*}
				(\alpha,\beta,\beta,\alpha) && (\alpha,\alpha,\beta,\beta).
			\end{align*}
			To show these orbits are infinite, apply Hurwitz moves to obtain
			\begin{align*}
				(\alpha,\beta,\beta,\alpha) &\xmapsto{\sigma_3} (\alpha,\beta,\gamma,\beta) \\
				(\alpha,\alpha,\beta,\beta) &\xmapsto{\sigma_2} (\alpha,\de,\alpha,\beta),
			\end{align*}
			and apply \Cref{lemma:clockwise}.
	\end{enumerate}
	We will show that for $n \geq 5$, all spiders except $(\alpha,\ldots,\alpha)$ correspond to genus one fibrations where $[B_n : \Br(\pi)] = \infty$. It suffices to do so for $n = 5$, where there are $2^4 - 1 = 15$ possibilities to check.
	\begin{enumerate}
		\item The tuples
			\begin{align*}
				&(\alpha,\alpha,\alpha,\beta,\beta) && (\alpha,\alpha,\beta,\beta,\alpha) && (\alpha,\alpha,\beta,\beta,\beta) && (\alpha,\beta,\alpha,\alpha,\beta) \\
				&(\alpha,\beta,\beta,\alpha,\alpha) && (\alpha,\beta,\beta,\beta,\alpha) && (\alpha,\alpha,\beta,\alpha,\beta) && (\alpha,\beta,\beta,\alpha,\beta)
			\end{align*}
			all contain a subspider of length $4$ of type $(\alpha,\beta,\beta,\alpha)$ or $(\alpha,\alpha,\beta,\beta)$, which gives rise to an infinite Hurwitz orbit. Recall that topological type is taken up to the action of $\SL_2\ZZ$, so that $(\beta,\alpha,\alpha,\beta)$ has type $(\alpha,\beta,\beta,\alpha)$ and $(\beta,\beta,\alpha,\alpha)$ has type $(\alpha,\alpha,\beta,\beta)$.
		\item Consider $(\alpha,\alpha,\alpha,\alpha,\beta)$. Apply the Hurwitz move $\sigma_2\sigma_3\sigma_4^{-1}$ to obtain
		   \begin{align*}
		   	(\alpha,\alpha,\alpha,\alpha,\beta) &\mapsto (\alpha,\alpha,\alpha,\beta,\de) \\
		   										&\mapsto (\alpha,\alpha,\de,\alpha,\de) \\
		   										&\mapsto (\alpha,\zeta, \alpha, \alpha, \de),
		   \end{align*}
		   where 
		   \begin{align*}
			   \alpha = \begin{pmatrix} 1 \\ 0 \end{pmatrix} && \beta = \begin{pmatrix} 0 \\ 1 \end{pmatrix} && \de = \begin{pmatrix} 1 \\ -1 \end{pmatrix} && \zeta = \begin{pmatrix} 2 \\ -1 \end{pmatrix}.
		   \end{align*}
		    The triple $(\alpha,\zeta,\de)$ is then a counterclockwise triangle and so this tuple has an infinite Hurwitz orbit by \Cref{lemma:clockwise}.
		\item Consider $(\alpha,\alpha,\alpha,\beta,\alpha)$. Apply the Hurwitz move $\sigma_3\sigma_4$ to obtain
		   \begin{align*}
		   	(\alpha,\alpha,\alpha,\beta,\alpha) &\mapsto (\alpha,\alpha,\beta,\alpha,\beta).
		   \end{align*}
		   The subspider $(\alpha,\alpha,\beta,\beta)$ gives rise to an infinite Hurwitz orbit by the calculation above for $n = 4$.
		\item Consider $(\alpha,\alpha,\beta,\alpha,\alpha)$, applying $\sigma_2\sigma_3$ produces a subspider of type $(\alpha,\beta,\beta,\alpha)$.
		\item Consider $(\alpha,\beta,\alpha,\alpha,\alpha)$, applying $\sigma_1\sigma_2$ produces a subspider of type $(\beta,\beta,\alpha,\alpha)$.
		\item Consider $(\alpha,\beta,\alpha,\beta,\alpha)$, applying $\sigma_2\sigma_1^2\sigma_4$ produces
		   \begin{align*}
		   	(\alpha,\beta,\alpha,\beta,\alpha) &\mapsto (\beta,\beta,\de,\gamma,\beta).
		   \end{align*}
		   The intersection number $i(\de,\gamma) = 2$ so by \Cref{lemma:int2-inf} we obtain an infinite Hurwitz orbit.
		\item Consider $(\alpha,\beta,\alpha,\beta,\beta)$. Applying $\sigma_1$ yields $(\alpha,\alpha,\beta,\beta)$ as a subspider.
		\item Consider $(\alpha,\beta,\beta,\beta,\beta)$. Applying $\sigma_2\sigma_1$ yields $(\de,\de,\beta,\beta)$ as a subspider.
	\end{enumerate}
	Equipped with this calculation, the proof is finished.
\end{proof}

\section{Branched covers associated to Lefschetz fibrations}\label{sec:branched-covers}

In this section we associate to a smooth Lefschetz fibration $\pi : M \to S$ a family of branched covers of $S$. Here and throughout the section we write $S$ for the base of the fibration, denoted $B$ in the introduction, reserving $B$ for the base of the abstract towers of covers in \Cref{lemma:abelian-lifting} and \Cref{lemma:double-lift}. Then in \Cref{subsec:sp-iso} we examine the level 2 branched cover associated to a genus one fibration and prove \Cref{thm:symp-iso-intro}. Let $\pi : M \to S$ be a smooth Lefschetz fibration of genus $g \geq 1$, with singular locus $\Delta \subseteq S$. In what follows $S$ is an arbitrary connected compact surface, possibly with $\partial S \neq \0$ and we write
\begin{align}
	\Mod_\pi \coloneqq \{[f] \in \Mod(S,\Delta) \st [f] \text{ admits a fiber-preserving lift to } M \text{ acting as the identity on } \pi^{-1}(\partial S)\},\label{eq:mod-pi}
\end{align}
to denote the group of liftable mapping classes instead of $\Br(\pi)$. Associated to any finite $\Mod(\Sigma_g)$-set $X$, one obtains via the Riemann realizability theorem a finite branched cover of $S$, via the monodromy representation
\begin{align}
	\phi_{\pi, X} : \pi_1(S \setminus \Delta) \to \Mod(\Sigma_g) \to \Perm(X),\label{defn:mon-rep-fin}
\end{align}
where $\Perm(X)$ denotes the group of permutations of $X$. Explicitly, the kernel $\ker \phi_{\pi, X}$ corresponds to a genuine cover of $S \setminus \Delta$ under the Galois correspondence, and one can fill in the punctures of the genuine cover to construct the desired branched cover.

Of particular interest is when $X$ is $\Mod(\Sigma_g)/H$ for some normal subgroup $H$ of $\Mod(\Sigma_g)$ with finite index. By a minor abuse of notation, we will denote the monodromy by 
\[
	\phi_{\pi_H} : \pi_1(S \setminus \Delta) \to \Mod(\Sigma_g)/H
\]
and the associated branched cover by 
\[
	\pi_H : \Sigma_\pi(H) \to S.
\]
Note that the branched cover is Galois.

We will attempt to analyze the fibration $\pi : M \to S$ through the above family of covers. In particular, we will focus on the following example.
\begin{example}\label{example:branched-congruence}
	Let $H$ be the congruence subgroup $\Mod(\Sigma_g)[N]$, i.e., the kernel of the symplectic representation modulo $N$, that is $\Mod(\Sigma_g) \twoheadrightarrow \Sp_{2g}(\ZZ/N\ZZ)$ (for properties of congruence subgroups, see \cite[\S 6.4]{primer}). The branched cover $\Sigma_\pi(H) \to S$ will be denoted in this case by 
	\begin{equation}
		\begin{aligned}
			\pi[N] &\coloneqq \pi_{\Mod(\Sigma_g)[N]}\\
			\Sigma_\pi[N] &\coloneqq \Sigma_\pi(\Mod(\Sigma_g)[N]) \\
			\pi[N] &: \Sigma_\pi[N] \to S. 
		\end{aligned}
		\label{defn:pi-N}
	\end{equation}
	Furthermore, since $\Mod(\Sigma_g)[N]$ is normal in $\Mod(\Sigma_g)$, the monodromy of the cover $\Sigma_\pi[N]$ is given by
	\begin{align}
		\phi_{\pi[N]} : \pi_1(S \setminus \Delta) \xrightarrow{\phi_\pi} \Mod(\Sigma_g) \to \Sp_{2g}(\ZZ/N\ZZ). \label{defn:mon-pi-N}
	\end{align}
	In particular, when $p$ is prime and $\phi : \pi_1(S \setminus \Delta) \to \Mod(\Sigma_g) \to \Sp_{2g}(\mathbb{F}_p)$ is surjective, $\Sigma_\pi[p] \to S$ is formed from $M \to S$ by taking the Galois closure of the cover formed by replacing each smooth fiber $F_x$ for $x \in S$ with $H^1(F_x;\mathbb{F}_p) - \{0\}$. The aforementioned Galois closure is equivalent to $\Sigma_\pi[p]$ because $\Sp_{2g}(\mathbb{F}_p)$ acts transitively on $(\mathbb{F}_p)^{2g} - \{0\}$ \cite[Proposition 3.2]{classical-groups-grove}, and hence the intermediate cover whose fibers are the nonzero vectors in $H_1(\Sigma_g; \mathbb{F}_p)$ is connected.
\end{example}

As discussed by Looijenga and by Putman--Wieland \cite{looijenga-prym-1997,putman-wieland}, any branched cover $p : \widetilde{S} \to S$ branched over $\Delta \subseteq S$ gives rise to a finite-index subgroup 
\begin{align*}
	\Mod_p \coloneqq \{[f] \in \Mod(S,\Delta) \st [f] \text{ admits a representative lifting to } \widetilde{S} \text{ acting as the identity on } p^{-1}(\partial S)\},
\end{align*}
as in \eqref{eq:mod-pi}. If $\partial S \neq \0$, the requirement that the lift acts as the identity on $p^{-1}(\partial S) = \partial \widetilde{S}$ allows us to identify $\Mod_p$ with a subgroup of $\Mod(\widetilde{S},p^{-1}(\Delta))$.

The following proposition relates the family of finite-index subgroups $\Mod_{\pi_H}$ of $\Mod(S,\Delta)$ with the (generally infinite-index) subgroup $\Mod_\pi$. In particular, when $S$ has one boundary component, it shows that the family of $\Mod_{\pi_H}$ determine $\Mod_\pi$. Throughout this section, we will explore the consequences of \Cref{prop:local-to-global-smooth} for certain congruence covers arising from elliptic fibrations.
\begin{proposition}\label{prop:local-to-global-smooth}
	Let $\pi : M \to S$ be a Lefschetz fibration of genus $g$ with singular locus $\Delta \subseteq S$ and let $I$ be a collection of finite-index subgroups of $\Mod(\Sigma_g)$ such that $\bigcap_{H \in I} H = 1$. Then
	\begin{align*}
		\Mod_\pi \subseteq \bigcap_{H \in I} \Mod_{\pi_H},
	\end{align*}
	with equality when $S$ has one boundary component. In particular, the result holds when $I$ is the collection of all finite-index subgroups of $\Mod(\Sigma_g)$.
\end{proposition}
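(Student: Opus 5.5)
The plan is to translate both sides into statements about Hurwitz stabilizers of monodromy representations, using the Moishezon-type lifting criterion already invoked in the paper (\cite[Theorem 2.1]{how-large}) together with its elementary analogue for branched covers. Fix a basepoint $b \in \partial S$ when $\partial S \neq \0$, and an arbitrary basepoint otherwise. A mapping class $[f] \in \Mod(S,\Delta)$ acts on $\pi_1(S\setminus\Delta,b)$, and hence on monodromy representations by precomposition.

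\textbf{Step 1: the inclusion.} Suppose $[f] \in \Mod_\pi$, with fiber-preserving lift $\widetilde f : M \to M$. Then $\widetilde f$ carries fibers to fibers. It therefore induces a natural identification of the fiber over $x$ with the fiber over $f(x)$, and so of the induced data $\Mod(\Sigma_g)/H$ attached to each regular fiber. Concretely, I would argue as follows. The lift conjugates the monodromy: we have $\phi_\pi \circ f_\ast = c \circ \phi_\pi$, where $c$ is conjugation by the mapping class of $\widetilde f|_{\pi^{-1}(b)}$. Since $H$ is normal, the reduced representations satisfy $\phi_{\pi_H}\circ f_\ast = \bar c \circ \phi_{\pi_H}$, with $\bar c$ an automorphism of $\Mod(\Sigma_g)/H$. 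The covering-space lifting criterion then gives a lift of $f$ to the branched cover $\Sigma_\pi(H)$.

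If $\widetilde f$ is the identity on $\pi^{-1}(\partial S)$, then $c$ is trivial. In that case the lift to $\Sigma_\pi(H)$ can be chosen to be the identity on $\pi_H^{-1}(\partial S)$: the lift is determined by where it sends the fiber over $b$, we send that fiber identically, and equivariance along the boundary circle propagates this to all of $\pi_H^{-1}(\partial S)$. This gives $\Mod_\pi \subseteq \Mod_{\pi_H}$ for every $H$, and hence the stated inclusion. For non-normal $H$ in $I$, I would either pass to the normal core, which is again of finite index, or interpret $\pi_H$ through the $\Mod(\Sigma_g)$-set $\Mod(\Sigma_g)/H$; the argument is the same.

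\textbf{Step 2: equality for one boundary component.} Here $\Mod_\pi$ is exactly the stabilizer of $\phi_\pi$ itself, not up to conjugacy, by \cite[Theorem 2.1]{how-large}. Similarly $\Mod_{\pi_H}$ equals the stabilizer of $\phi_{\pi_H}$ as a homomorphism to $\Mod(\Sigma_g)/H$. The reason is that requiring the identity on the connected boundary fiber fixes the identification at $b$, which kills the conjugation ambiguity.

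Now take $[f]$ in the intersection, and set $\psi = \phi_\pi\circ f_\ast^{-1}$. For each $\gamma$, the element $\psi(\gamma)\phi_\pi(\gamma)^{-1}$ lies in $H$ for every $H \in I$, hence in $\bigcap_{H\in I} H = 1$. Therefore $\psi = \phi_\pi$, and so $[f] \in \Mod_\pi$. The final sentence of the proposition follows because $\Mod(\Sigma_g)$ is residually finite (Grossman).

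\textbf{Expected obstacle.} The main obstacle is the precise bookkeeping of the boundary condition. On one side we must check that "identity on $\pi^{-1}(\partial S)$" corresponds exactly to fixing the representation on the nose rather than up to conjugation. On the other side we must check the same for the branched cover $\Sigma_\pi(H)$. This is also the reason equality can fail with several boundary components or closed $S$: there only conjugacy classes are controlled, and the conjugating elements may depend on $H$ without converging.
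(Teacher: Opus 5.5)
Your proposal is correct and follows essentially the same route as the paper. The inclusion comes from the lift conjugating $\phi_\pi$, and hence $\phi_{\pi_H}$. Equality for one boundary component comes from the observation that $\phi_\pi(f_\ast\gamma)\,\phi_\pi(\gamma)^{-1}\in\bigcap_{H\in I}H=1$, followed by the Moishezon--Matsumoto criterion, with residual finiteness giving the final sentence; your explicit tracking of the boundary condition in the inclusion step is a useful refinement of what the paper leaves implicit.
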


\begin{proof}
	It suffices to prove the claim for any collection $I$ of finite-index subgroups with $\bigcap_{H \in I} H = 1$ since the claim for all finite-index subgroups then follows by the residual finiteness of $\Mod(\Sigma_g)$ \cite[Theorem 6.11]{primer}.
	Suppose first that $[f] \in \Mod_\pi$. As before, let $\phi_\pi$ be the monodromy representation of $\pi$. Then $\phi_\pi \circ
	f_\ast$ is conjugate to $\phi_\pi$ by an element of $\Mod(\Sigma_g)$, since $f \circ \pi$ and $\pi$ are isomorphic
	$\Sigma_g$-bundles over $S \setminus \Delta$. 
	Similarly, let $\phi_{\pi_H}$ be the monodromy representation of the cover $\pi_H$ as in \eqref{defn:mon-rep-fin}. It follows that $\phi_{\pi_H}$ is conjugate to $\phi_{\pi_H} \circ f_\ast$ for any subgroup $H$, proving the first inclusion.

	Now suppose $S$ has boundary, and choose a basepoint $b \in \partial S$.
	Supposing that $[f] \in \Mod_{\pi_H}$ for all $H \in
	I$ implies that $\phi_{\pi_H} \circ f_\ast =
	\phi_{\pi_H}$ for all $H \in I$. Then for any $\gamma \in
	\pi_1(S \setminus \Delta,b)$ we have
	\begin{align*}
		\phi_{\pi_H}(f_\ast(\gamma))\phi_{\pi_H}(\gamma)^{-1} = 1,
	\end{align*}
	and hence 
	\begin{align}
		\phi_\pi(f_\ast(\gamma))\phi_\pi(\gamma)^{-1} \in H.\label{eq:triv-local}
	\end{align}
	By assumption, since \eqref{eq:triv-local} holds for all $H$, we have that
	$\phi_\pi(f_\ast(\gamma)) = \phi_\pi(\gamma)$, and hence $\phi_\pi
	\circ f_\ast = \phi_\pi$. By a theorem of Moishezon when $g = 1$ and Matsumoto when $g \geq 2$ (see \cite[Theorem~2.1]{how-large}), since $\phi_\pi \circ f_\ast = \phi_\pi$ it follows that $f \in \Mod_\pi$ as desired.
\end{proof}

\begin{remark}
	As discussed in our previous paper \cite{how-large}, $\Mod_\pi$ generally has infinite index in $\Mod(S,\Delta)$, while $\Mod_{\pi_H}$ is always of finite index \cite{how-large}. With this in mind, we would like to draw attention to an analogy between $\Mod_\pi$ and the Torelli group, where each $\Mod_{\pi_H}$ plays the role of a congruence subgroup of $\Mod(S,\Delta)$. The analogy fails formally, since $\Mod_\pi$ is not normal in $\Mod(S,\Delta)$. However, it provides useful intuition as to the difficulty involved in the study of $\Mod_\pi$. See the discussion at the end of \Cref{subsec:sp-iso} for more on this analogy.
\end{remark}
Notably, \Cref{prop:local-to-global-smooth} provides a family of novel projective representations of $\Mod(\pi)$ and $\Mod_\pi$, as follows. Let $G$ be the deck group of $\Sigma_\pi(H) \to S$ and $\Sp_G(\Sigma_\pi(H))$ the group of symplectic transformations of $H^1(\Sigma_\pi(H))$ commuting with $G$, then as in \cite{looijenga-prym-1997} one obtains a \textit{higher Prym representation}
\begin{align*}
	\Mod_{\pi_H} \to \PSp_G(\Sigma_\pi(H)),
\end{align*}
where $\PSp_G(\Sigma_\pi(H))$ is the quotient of $\Sp_G(\Sigma_\pi(H))$ by its center.

The following subsections will consider the exceptional cases when $H$ is normal in $\Mod(\Sigma_g)$ and $\Mod(\Sigma_g)/H$ is solvable. Since $\Mod(\Sigma_g)$ is perfect for $g \geq 3$ \cite[Theorem 5.2]{primer}, such cases only occur when $g \leq 2$. The advantage of taking solvable quotients $\Mod(\Sigma_g)/H$ is the usual advantage: covers decompose as a tower of cyclic covers and these are particularly well understood.

\subsection{Branched covers of the sphere/disk arising from \texorpdfstring{$\SL_2 (\ZZ/N\ZZ)$}{SL\_2(Z/NZ)}}\label{subsec:sl2-mod}

Fix a Lefschetz fibration $\pi : M \to S$ of genus $g$ and singular locus $\Delta$. 
Consider $\Mod_{\pi[N]}$, where $\pi[N] : \Sigma_\pi[N] \to S$ is the branched cover corresponding to $\Mod(\Sigma_g)[N]$ as given in \Cref{example:branched-congruence}. 
For $N \geq 3$ let $\overline{\Sigma}_\pi[N] \to S$ denote the branched cover corresponding to the monodromy $\Mod(\Sigma_g) \to \Sp_{2g}(\ZZ/N\ZZ) \to \PSp_{2g}(\ZZ/N\ZZ)$.
Throughout this section we specialize to the case where $S = D^2$ and the fibration has $n$ singular fibers. Hence, we use the notation
\begin{align*}
	\Br(\pi) \coloneqq \Mod_\pi && \Br(\pi[N]) \coloneqq \Mod_{\pi[N]} < B_n
\end{align*}
as before. We record the following proposition relating $\Sigma_\pi[\ell]$ and $\overline{\Sigma}_\pi[\ell]$. It says that no information is lost in passing to the projective quotient, so that $\Br(\pi[\ell])$ may be read off the $\PSp$ tower.
\begin{proposition}\label{prop:psp-lifting}
	Let $\ell \geq 3$ be prime and let $\pi : M \to D^2$ be a Lefschetz fibration with $n$ singular fibers over $\Delta \subseteq D^2 \setminus \partial D^2$. Then any braid $\sigma$ in $B_n = \Mod(D^2, \Delta)$ which lifts to $\overline{\Sigma}_\pi[\ell]$ also lifts to $\Sigma_\pi[\ell]$.
\end{proposition}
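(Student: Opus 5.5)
The plan is to translate liftability into a condition on monodromy representations and then show that the only possible discrepancy, a central sign, must vanish. Fix the basepoint $b \in \partial D^2$ and let
\[
\phi = \phi_{\pi[\ell]} : \pi_1(D^2\setminus\Delta,b)\to \Sp_{2g}(\mathbb{F}_\ell)
\]
be the monodromy from \eqref{defn:mon-pi-N}. Let $\overline{\phi}$ be its composite with the quotient $\Sp_{2g}(\mathbb{F}_\ell)\to\PSp_{2g}(\mathbb{F}_\ell)$. Both $\Sigma_\pi[\ell]\to D^2$ and $\overline{\Sigma}_\pi[\ell]\to D^2$ are Galois covers with these monodromies. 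As in the proof of \Cref{prop:local-to-global-smooth}, the basepoint lies on the boundary and the lift is required to be the identity over $\partial D^2$. Hence a braid $\sigma$ lifts to the respective cover if and only if $\phi\circ\sigma_\ast=\phi$, respectively $\overline{\phi}\circ\sigma_\ast=\overline{\phi}$. This is the standard lifting criterion for covers with a fixed point in the fiber over $b$, and it will be the first thing I record.

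So assume $\overline{\phi}\circ\sigma_\ast=\overline{\phi}$. Since the kernel of $\Sp_{2g}(\mathbb{F}_\ell)\to\PSp_{2g}(\mathbb{F}_\ell)$ is the center $\{\pm I\}$, we get a function
\[
\epsilon(\gamma)\coloneqq\phi(\sigma_\ast\gamma)\phi(\gamma)^{-1}\in\{\pm I\}.
\]
Because $\epsilon$ takes central values, it is a homomorphism $\pi_1(D^2\setminus\Delta)\to\{\pm I\}$. Indeed,
\[
\phi(\sigma_\ast(\gamma\delta))=\epsilon(\gamma)\phi(\gamma)\,\epsilon(\delta)\phi(\delta)=\epsilon(\gamma)\epsilon(\delta)\phi(\gamma\delta).
\]
It therefore suffices to check that $\epsilon(\gamma_i)=I$ for a standard generating set $\gamma_1,\ldots,\gamma_n$.

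For each generator, $\phi(\gamma_i)$ is the image of a Dehn twist about a nonseparating vanishing cycle, so it is a symplectic transvection over $\mathbb{F}_\ell$. In particular it is unipotent, with all eigenvalues equal to $1$. The loop $\sigma_\ast\gamma_i$ is a conjugate of some $\gamma_j$, since braids permute the punctures and send peripheral loops to peripheral loops. Hence $\phi(\sigma_\ast\gamma_i)$ is also a transvection and is unipotent.

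If $\epsilon(\gamma_i)=-I$, then $\phi(\sigma_\ast\gamma_i)=-\phi(\gamma_i)$, whose eigenvalues are all $-1$. This is a contradiction, because $-1\neq 1$ in $\mathbb{F}_\ell$ for $\ell$ odd. Equivalently, the traces $2g$ and $-2g$ differ unless $\ell\mid 4g$, so the eigenvalue formulation is the cleaner one to use. Thus $\epsilon$ is trivial, so $\phi\circ\sigma_\ast=\phi$ and $\sigma$ lifts to $\Sigma_\pi[\ell]$.

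The main point requiring care is the first step, the exact lifting criterion with the boundary normalization. One has to make sure that "lifts" means equality of monodromies, not equality up to conjugation, so that the defect is a genuinely central function and hence a homomorphism. If vanishing cycles could be separating (for $g\ge2$), the same argument still works: a twist about a separating curve maps to $I$, and $-I\neq I$ is again excluded by comparing eigenvalues.
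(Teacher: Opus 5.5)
Your proof is correct and follows the same route as the paper. You translate liftability into exact equality of monodromies using the boundary basepoint, note that $\overline{\phi}\circ\sigma_\ast=\overline{\phi}$ forces $\phi(\sigma_\ast\gamma_i)=\pm\phi(\gamma_i)$ on the standard generators, and rule out the sign $-1$ because the generators map to symplectic transvections. The only difference is the last step: the paper evaluates $\tau_w=\varepsilon_i\tau_v$ on $w$ and $v$ directly, while you compare eigenvalues (unipotent versus all eigenvalues $-1$). Your version is slightly cleaner, and it also covers separating vanishing cycles, whose monodromy is the identity rather than a transvection about a nonzero vector.
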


\begin{proof}
	Let $\gamma_i$ for $1 \leq i \leq n$ be simple loops about the $n$ punctures $\Delta$, based at $b \in \partial D^2$.
	Also let $\phi : \pi_1(D^2 \setminus \Delta, b) \to \Sp_{2g}(\mathbb{F}_\ell)$ be the monodromy representation of $\pi[\ell]$. Fix some braid $\sigma$ lifting to $\overline{\Sigma}_\pi[\ell]$. Then for $\psi = \sigma \cdot \phi$
	\begin{align*}
		(\phi(\gamma_1),\ldots,\phi(\gamma_n)) &= (X_1,\ldots, X_n) \\
		(\psi(\gamma_1),\ldots,\psi(\gamma_n)) &= (\e_1 X_1,\ldots, \e_n X_n),
	\end{align*}
	for some $\e_i \in \mathbb{F}_\ell^\times$, since the center of $\Sp_{2g}(\mathbb{F}_\ell)$ is the set of scalar matrices $\e \Id$ with $\e^2 = 1$. Moreover, each $X_i$ and $\e_i X_i$ is a symplectic transvection (being conjugates of symplectic transvections) coming from a Dehn twist in $\Mod(\Sigma_g)$. Let $\tau_v(a) = a + \langle a,v \rangle v$ be the symplectic transvection about $v \in \mathbb{F}_\ell^{2g}$. Then for some nonzero $v,w \in \mathbb{F}_\ell^{2g}$ one has that $X_i = \tau_v$ and $\e_i X_i = \tau_w$, hence
	\begin{align*}
		\tau_w &= \e_i \tau_v \\
		w = \tau_w(w) &= \e_i\tau_v(w) = \e_i w + \e_i \langle w,v \rangle v,
	\end{align*}
	and hence either $\e_i = 1$ or $w$ is a scalar multiple of $v$. If $w$ is a scalar multiple of $v$, then
	\begin{align*}
		v = \tau_w(v) &= \e_i\tau_v(v) = \e_i v.
	\end{align*}
	Hence $\e_i = 1$. In any case, $\e_i = 1$ for all $i$, so $\psi = \phi$. Therefore $\sigma$ lifts to $\Sigma_\pi[\ell]$, completing the proof.
\end{proof}

We now restrict to genus one Lefschetz fibrations. It is well known that when $g = 1$ the mapping class group $\Mod(\Sigma_1)$ is isomorphic to $\SL_2\ZZ$ \cite[Section 2.2.4]{primer}. Furthermore, the quotients $\SL_2(\ZZ/N\ZZ)$ are solvable if and only if $N = 2^a3^b$ for some $a,b \in \ZZ_{\geq 0}$. Throughout this section let $\pi : M \to D^2$ be a genus one fibration with $n$ nodal fibers. Furthermore, fix some $b \in \partial D^2$ for convenience. Examples of such fibrations are given in \Cref{subsec:finite} and notated there as $q_n$ with monodromies
\begin{align*}
	(\phi_{q_n}(\gamma_1),\ldots,\phi_{q_n}(\gamma_n)) = (T_\alpha, T_\beta, T_\alpha, T_\beta, \ldots),
\end{align*}
where $(\alpha,\beta)$ is a geometric symplectic basis for $T^2$ and the $\gamma_i$ are simple loops about punctures in $D^2$.

We begin by exploring the special cases of $N = 2$ and $N = 3$, as these are particularly instructive. It is well known that $\PSL_2 \mathbb{F}_2 \cong \SL_2 \mathbb{F}_2 \cong S_3$ and $\PSL_2 \mathbb{F}_3 \cong A_4$, for example by examining their actions on $\PP^1(\mathbb{F}_2)$ and $\PP^1(\mathbb{F}_3)$ respectively. Hence, the covers $\Sigma_\pi[2]$ and $\Sigma_\pi[3]$ decompose as towers of branched covers
\begin{center}
	\begin{tikzcd}
		\Sigma_\pi[2] \ar[d,"\ZZ/3\ZZ"] \\
		S[2] \ar[d,"\ZZ/2\ZZ"] \\
		D^2
	\end{tikzcd} \hspace{0.1in}
	\begin{tikzcd}[column sep =small, row sep=tiny]
		\Sigma_\pi[3] \ar[dr,"\ZZ/2\ZZ"] &\\
		&\overline{\Sigma}_\pi[3] \ar[dd,"V_4"] \\
		\\
		&S[3] \ar[dl,"\ZZ/3\ZZ"] \\
		D^2,
	\end{tikzcd}
\end{center}
where $V_4 \cong \ZZ/2\ZZ \times \ZZ/2\ZZ$ is the Klein four group. We first identify these covers topologically, and find that both $S[2]$ and $S[3]$ are familiar from geometric topology.
\begin{proposition}\label{prop:mod2-hyper-super}
	Fix a nontrivial Lefschetz fibration $\pi : M \to D^2$ with $n$ singular fibers. Let $S[2]$ and $S[3]$ be the branched covers of $D^2$ induced by the monodromies
	\begin{align*}
		\pi_1(D^2 \setminus \Delta, b) &\to \Mod(T^2) \cong \SL_2\ZZ \to \SL_2\mathbb{F}_2 \to \ZZ/2\ZZ \\
		\pi_1(D^2 \setminus \Delta, b) &\to \Mod(T^2) \cong \SL_2\ZZ \to \SL_2\mathbb{F}_3 \to \ZZ/3\ZZ,
	\end{align*}
	respectively, with the maps above given by the decompositions of $\SL_2 \mathbb{F}_2 \cong S_3 \cong \ZZ/3\ZZ \rtimes \ZZ/2\ZZ$ and $\PSL_2 \mathbb{F}_3 \cong A_4 \cong V_4 \rtimes \ZZ/3\ZZ$. 

	Then $S[2]$ and $S[3]$ are the hyperelliptic and superelliptic covers of $D^2$, respectively. Furthermore, the covers $\Sigma_\pi[2] \to S[2]$ and $\Sigma_\pi[3] \to S[3]$ are unbranched. The hyperelliptic covers are depicted in \Cref{fig:hyperelliptic}. A standard reference for the hyperelliptic case is \cite[\S 9.4]{primer}.
\end{proposition}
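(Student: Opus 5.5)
Both identifications come from computing the local monodromy of each composite representation around a single puncture. Since $\pi$ is a genus one Lefschetz fibration with nodal fibers, every standard generator $\gamma_i$ is sent by $\phi_\pi$ to a conjugate of the shear $A = \begin{psmallmatrix} 1 & -1 \\ 0 & 1\end{psmallmatrix}$, that is, to a Dehn twist. The plan is therefore to track the image of this single conjugacy class through each quotient.

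\textbf{The level 2 case.} I would reduce $A$ modulo $2$ and work in $\SL_2\mathbb{F}_2 \cong S_3$, acting on the three nonzero vectors of $\mathbb{F}_2^2$. There the image of $A$ fixes $(1,0)^T$ and swaps the other two vectors, so it is a transposition. The quotient $S_3 \to \ZZ/2\ZZ$ is the sign map, so every $\gamma_i$ maps to the generator of $\ZZ/2\ZZ$. A $\ZZ/2\ZZ$-cover of $D^2$ with nontrivial monodromy around every point of $\Delta$ is, by definition, the hyperelliptic double cover. It is connected because the monodromy is surjective, which is where nontriviality ($n \geq 1$) is used.

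\textbf{The level 3 case.} Here I would reduce $A$ modulo $3$ and pass to $\PSL_2\mathbb{F}_3 \cong A_4$ via its action on $\PP^1(\mathbb{F}_3)$, which has four points. The image of $A$ fixes $\infty$ and cyclically permutes the other three points, so it is a $3$-cycle. The quotient $A_4 \to A_4/V_4 \cong \ZZ/3\ZZ$ sends every $3$-cycle to a nonzero element. Moreover, all conjugates of $A$ lie in a single $A_4$-conjugacy class of $3$-cycles, so every $\gamma_i$ maps to the same generator $1 \in \ZZ/3\ZZ$. This is the step that needs care: one must check that $A$ and its $\SL_2\ZZ$-conjugates land in the same one of the two $A_4$-classes of $3$-cycles. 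That holds because the map $\SL_2\ZZ \to A_4$ is surjective, so conjugation in $\SL_2\ZZ$ maps onto conjugation in $A_4$, and any two conjugates of $A$ therefore map to elements conjugate in $A_4$, hence to the same element of the abelian quotient $\ZZ/3\ZZ$. A cyclic $\ZZ/3\ZZ$-cover of $D^2$ whose monodromy is $1$ around every point of $\Delta$ is the superelliptic cover $y^3 = \prod_i (z - x_i)$.

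\textbf{Unbranchedness of the upper covers.} Each intermediate cover is the quotient of the Galois cover $\Sigma_\pi[N]$ by the subgroup $K = \ker(G \to Q)$, where $G$ is the full monodromy group and $Q$ is the cyclic quotient. The cover $\Sigma_\pi[N] \to S[N]$ is branched at a preimage of $x_i$ exactly when the local monodromy there, namely the part of the stabilizer generated by conjugates of the image of $\gamma_i$ that lies in $K$, is nontrivial. Over $x_i$ the local monodromy generating the inertia group is a conjugate of $\phi(\gamma_i)$. This element has order $2$ in $S_3$ and order $3$ in $\SL_2\mathbb{F}_3$ (the latter because $A^3 \equiv \Id \pmod 3$). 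Its image in $Q$ has the same order: order $2$ in $\ZZ/2\ZZ$, and order $3$ in $\ZZ/3\ZZ$, the latter via $\SL_2\mathbb{F}_3 \to \PSL_2\mathbb{F}_3 \to \ZZ/3\ZZ$. So the inertia group meets $K$ trivially, all ramification is absorbed by $S[N] \to D^2$, and $\Sigma_\pi[N] \to S[N]$ is unbranched. Here $\Sigma_\pi[3]$ is used directly with $G \subseteq \SL_2\mathbb{F}_3$, and the same order computation applies.
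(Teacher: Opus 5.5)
Your proposal is correct and follows essentially the same route as the paper: you reduce to the single Dehn twist $T_\alpha$ using conjugacy invariance and the abelianness of the cyclic quotient, identify its image as a transposition in $S_3$ and a $3$-cycle in $A_4$, and deduce unbranchedness by comparing the ramification over each puncture. Your inertia-group argument (the generator has the same order, $2$ resp.\ $3$, in the monodromy group and in the cyclic quotient, so the inertia group meets $K$ trivially) supplies the branching computation that the paper omits; note only that surjectivity of $\SL_2\ZZ \to A_4$ is not needed, since any homomorphism already sends conjugates to conjugates.
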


\begin{proof}
	To prove the first claim it suffices to compute the monodromies, and note that they are equivalent to the well-known hyperelliptic and superelliptic monodromies. The monodromy about any simple loop $\gamma$ about a puncture is given by $T_\de$ for some vanishing cycle $\de$. Note that $T_\de$ is conjugate to $T_\alpha$ since any two simple closed curves in $T^2$ are equivalent under $\Mod(T^2) \cong \SL_2\ZZ$. Because $\ZZ/2\ZZ$ and $\ZZ/3\ZZ$ are abelian, it thus suffices to compute the monodromy for $T_\alpha = \begin{psmallmatrix} 1 & -1 \\ 0 & 1 \end{psmallmatrix} $. 

	Specializing to $N = 2$, the matrix $T_\alpha$ acts on $\PP^1(\mathbb{F}_2)$ by
	\begin{align*}
		[1 : 0] \mapsto [1 : 0] && [0 : 1] \mapsto [1 : 1] && [1 : 1] \mapsto [0 : 1].
	\end{align*}
	Thus, under the identification of $\SL_2\mathbb{F}_2$ with $S_3$, the Dehn twist $T_\alpha$ is sent to a transposition. Under the quotient $S_3 \to \ZZ/2\ZZ$, a transposition is taken to $1$, and so comparing with the monodromy of the hyperelliptic involution depicted in \Cref{fig:hyperelliptic} proves the result. When $N = 3$, acting on $\PP^1(\mathbb{F}_3)$ gives
	\begin{align*}
		[1 : 0] \mapsto [1 : 0] && [0 : 1] \mapsto [-1 : 1] \mapsto [1 : 1] \mapsto [0 : 1].
	\end{align*}
	Hence, identifying $\PSL_2 \mathbb{F}_3$ with $A_4 \subseteq S_4$, direct computation shows that $T_\alpha$ acts as a 3-cycle, and so maps to $\pm 1 \in \ZZ/3\ZZ$ as desired.

	To show that $\Sigma_\pi[2] \to S[2]$ and $\Sigma_\pi[3] \to S[3]$ are unbranched, it suffices to compute the total branching of $\Sigma_\pi[N]$ over each puncture in $D^2$ for $N \leq 3$ and show this quantity is equal to the branching for $S[N]$. The branching is computed by analyzing the cycle type of $T_\alpha$ acting on $\SL_2 \mathbb{F}_2$ and $\SL_2 \mathbb{F}_3$. We omit the calculation for the sake of brevity.
\end{proof}

\begin{figure}
	\centering
	\subfloat[$n$ odd]{
		\centering
		\includegraphics[scale=0.8]{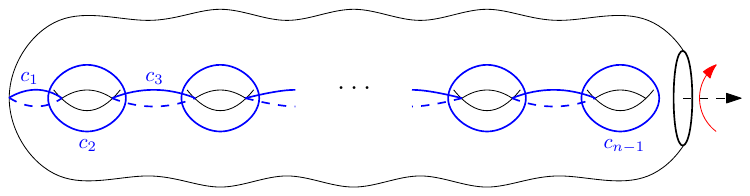}
	}
	\hspace{0.1in}
	\subfloat[$n$ even]{
		\centering
		\includegraphics[scale=0.8]{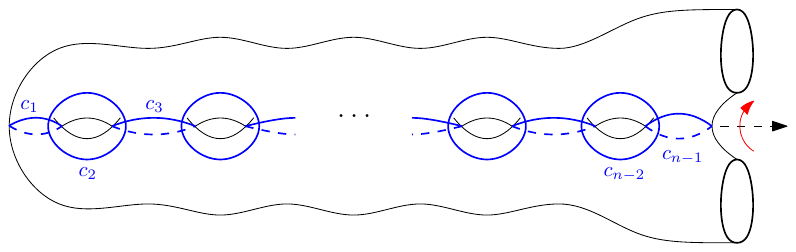}
	}\\
	\caption{The hyperelliptic covers branched over $n$ points on a disk for $n$ odd and $n$ even. The braid group $B_n$ acts by taking the half-twist $\sigma_i$ to the Dehn twist about $c_i$.}
	\label{fig:hyperelliptic}
\end{figure}

To identify $\Br(\pi[2])$ and $\Br(\pi[3])$ explicitly, we now recall a standard lemma concerning unbranched abelian covers, which can be found for example in \cite[\S 2]{looijenga-prym-1997} in slightly different language.
\begin{lemma}\label{lemma:abelian-lifting}
	Let $S$ be a connected, oriented surface with at most one boundary component, and let $p : \widetilde{S} \to S$ be an abelian cover with deck group $A$. Associated to the cover $p : \widetilde{S} \to S$ there is a cohomology class $\alpha \in H^1(S; A)$ such that the group of liftable mapping classes $\Mod_p$ is precisely
	\begin{align*}
		\Mod_p = \Stab_{\Mod(S)} \alpha < \Mod(S)
	\end{align*}
	where $\Mod(S)$ acts on $H^1(S;A)$ by mapping classes.

	Furthermore, when $\partial S \neq \0$, and we require mapping classes in $S,\widetilde{S}$ to be the identity on the boundary, we may identify $\Mod_p \subseteq \Mod(S)$ with a subgroup of $\Mod(\widetilde{S})$ that commutes with $A < \Mod(\widetilde{S})$. If $S$ does not have boundary, then there is a central extension
	\begin{align*}
		1 \to A \to \Mod_p^\# \to \Mod_p \to 1,
	\end{align*}
	where $\Mod_p^\# < \Mod(\widetilde{S})$ denotes the group consisting of all lifts of classes in $\Mod_p < \Mod(S)$.
\end{lemma}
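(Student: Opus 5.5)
We classify the cover by a homomorphism to the deck group and let mapping classes act on it. Since $p : \widetilde{S} \to S$ is a connected regular cover with abelian deck group $A$, it corresponds to a surjection $\phi : \pi_1(S,b) \to A$, where we take $b \in \partial S$ if the boundary is nonempty. Because $A$ is abelian, $\phi$ factors through $H_1(S;\ZZ)$. We let $\alpha \in \Hom(H_1(S;\ZZ),A) \cong H^1(S;A)$ be the induced class; for abelian targets conjugation is trivial, so no basepoint or conjugacy ambiguity arises. A mapping class $[f]$ acts on $H^1(S;A)$ by $f \cdot \alpha = \alpha \circ f_\ast^{-1}$, and this is well defined on isotopy classes.

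\textbf{Step 1: $\Mod_p = \Stab_{\Mod(S)} \alpha$.} By covering space theory, $f$ lifts to a homeomorphism of $\widetilde{S}$ if and only if $f_\ast(\ker \phi) = \ker \phi$, and this condition depends only on $[f]$. Since $\ker \phi \supseteq [\pi_1,\pi_1]$, the condition is equivalent to $f_\ast$ preserving $\ker \alpha \subseteq H_1(S)$. This is a priori weaker than $\alpha \circ f_\ast = \alpha$: it only gives $\alpha \circ f_\ast = \theta \circ \alpha$ for some $\theta \in \operatorname{Aut}(A)$.

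When $\partial S \neq \0$, we require the lift to be the identity on $p^{-1}(\partial S)$. Fixing $b \in \partial S$ and a lift $\widetilde{b}$, we use the lift $\widetilde{f}$ with $\widetilde{f}(\widetilde{b}) = \widetilde{b}$. It commutes with deck transformations exactly when $\phi \circ f_\ast = \phi$. To see this, transport the lift of a loop $\gamma$, ending at $\phi(\gamma)\cdot\widetilde{b}$, by $\widetilde{f}$: it ends at $\phi(f_\ast\gamma)\cdot\widetilde{b}$. We then use the fact that $\widetilde{f}$ fixes the whole boundary pointwise. So the identity-on-boundary requirement forces $\theta = \Id$, giving $\Mod_p = \Stab \alpha$.

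In the closed case, we read the definition of $\Mod_p$ as the lifts inducing the identity on the deck group; these are the lifts relevant to Prym representations as in \cite{looijenga-prym-1997}. This again yields $\theta = \Id$. I expect pinning down exactly this convention to be the main subtlety: without it, one only obtains the normalizer-type group $\{f : \alpha\circ f_\ast \in \operatorname{Aut}(A)\cdot\alpha\}$.

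\textbf{Step 2: Boundary case.} For $f \in \Mod_p$, the chosen lift $\widetilde{f}$ is the identity on $\partial\widetilde{S}$ and commutes with $A$. Isotopies of $f$ rel $\partial S$ lift uniquely to isotopies rel $\partial \widetilde{S}$, by the homotopy lifting property starting from the boundary. Hence $[f] \mapsto [\widetilde{f}]$ is a well-defined homomorphism into the centralizer of $A$ in $\Mod(\widetilde{S})$. It is injective because of Birman--Hilden-type results for unbranched covers: if $\widetilde{f}$ is isotopic to the identity rel boundary, push the isotopy down equivariantly, or compare actions on $\pi_1$ using the boundary basepoint.

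\textbf{Step 3: Closed case.} Every $f \in \Mod_p$ has exactly $|A|$ lifts, which differ by deck transformations. Let $\Mod_p^\#$ be the group of their isotopy classes. Restricting to lifts is a surjection $\Mod_p^\# \to \Mod_p$ with kernel the lifts of the identity, namely $A$. Here we use that a deck transformation isotopic to the identity is trivial, since a nontrivial finite-order map is never isotopic to the identity for $\chi(\widetilde{S})<0$; the torus case needs a separate remark. Finally, $A$ is central because the lifts commute with $A$ by Step 1, completing the extension.
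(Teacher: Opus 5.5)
Your argument is correct and is the one the paper has in mind. The paper gives only a one-line sketch (regard the monodromy $\pi_1(S)\to A$ as a class $\alpha\in H^1(S;A)$ and let $\Mod(S)$ act on it, citing Looijenga), and your Steps 1--3 fill this in, leaving only routine points implicit: the converse direction in the boundary case, and the fact that isotopic lifts project to isotopic maps, so that $\Mod_p^\#\to\Mod_p$ is well defined. Your two caveats are genuine defects of the statement rather than of your proof: for closed $S$, liftability alone only gives $\alpha\circ f_\ast\in\operatorname{Aut}(A)\cdot\alpha$ (the hyperelliptic involution lifts to any connected $\ZZ/3\ZZ$-cover but negates $\alpha$), and for $S=T^2$ the deck translations are isotopic to the identity, so $A$ does not inject into $\Mod(\widetilde{S})$. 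Neither matters for the paper, which only applies the lemma over surfaces with boundary.
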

\Cref{lemma:abelian-lifting} essentially follows by considering the action of $\Mod(S)$ on the monodromy representation $\pi_1(S) \to A$, which represents a cohomology class 
	\[
		\alpha \in H^1(S; A) \cong \Hom(H_1(S), A) \cong \Hom(\pi_1(S), A).
	\]
	Because the congruence covers $\pi[2] : \Sigma_\pi[2] \to D^2$ and $\pi[3] : \Sigma_\pi[3] \to D^2$ are expressed as towers of abelian covers, one must also understand the distinction between lifting for part of the tower and lifting the total cover when $S[2]$ or $S[3]$ has more than one boundary component.

In particular, consider a sequence of covers $p_2 : \widetilde{S} \to S, p_1 : S \to B$, where $B$ has one boundary component and $S$ has two boundary components. In this case, $[f] \in \Mod(B)$ can lift to $S$, fixing each boundary component $D_1,D_2 \subseteq \partial S$ and also lift to a fiber-wise diffeomorphism of $\widetilde{S}$ which fixes $p_2^{-1}(D_1)$. Nonetheless, the lift $\widetilde{f} : \widetilde{S} \to \widetilde{S}$ may permute the preimages $p_2^{-1}(D_2)$. The following lemma characterizes those $[f] \in \Mod_{p_1} < \Mod(B)$ that lift to $\widetilde{S}$ fixing the entire boundary $\partial \widetilde{S} = (p_1 \circ p_2)^{-1}(\partial B)$.
\begin{lemma}\label{lemma:double-lift}
	Let $B$ be a connected, compact, oriented surface with one boundary component, and let $p_1 : S \to B$ and $p_2 : \widetilde{S} \to S$ be connected abelian covers with deck groups $Q$ and $A$ respectively. 
	Then, fixing a basepoint $b \in \partial B$, the group $\Mod_{p_1}$ acts on $H^1(S, p_1^{-1}(b); A)$ by identifying $\Mod_{p_1} < \Mod(B)$ with the subgroup of lifts in $\Mod(S)$ by \Cref{lemma:abelian-lifting}. Associated to the tower of covers $\widetilde{S} \to S \to B$, there is a cohomology class $\alpha \in H^1(S, p_1^{-1}(b); A)$ such that 
	\begin{align*}
		\Mod_{p_1 \circ p_2} = \Stab_{\Mod_{p_1}} \alpha.
	\end{align*}
	That is, for any $[f] \in \Mod_{p_1}$, the mapping class $[f]$ lies in $\Mod_{p_1 \circ p_2}$ if and only if $F^\ast \alpha = \alpha$, where $F$ is the unique lift of $f$ to $S$ which fixes $\partial S = p_1^{-1}(\partial B)$. Furthermore, if $S$ has one boundary component, the relative cohomology can be replaced by $H^1(S; A)$.
\end{lemma}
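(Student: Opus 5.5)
Throughout we write $F$ for the unique lift of $f$ to $S$ fixing $\partial S$ pointwise; it exists by \Cref{lemma:abelian-lifting}, since $B$ has one boundary component. The plan is to encode the cover $p_2$, together with a choice of points over the basepoint, as a relative class, and then read off liftability rel boundary as invariance of that class. Concretely, pick a point $\tilde b_0 \in (p_1\circ p_2)^{-1}(b)$. This is the same as choosing one preimage $\tilde s \in p_2^{-1}(s)$ for every $s \in p_1^{-1}(b)$: for $s = g\cdot p_2(\tilde b_0)$ with $g$ in the total deck group, set $\tilde s := g\cdot \tilde b_0$. Because $p_2$ is an abelian $A$-cover, this datum is equivalent to a relative class $\alpha \in H^1(S, p_1^{-1}(b); A)$. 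To a path $c$ in $S$ between points $s, s'$ of $p_1^{-1}(b)$, the class assigns the element $a \in A$ with $a\cdot\tilde s = \tilde c(1)$, where $\tilde c$ is the lift of $c$ starting at $\tilde s$. Restricted to loops, $\alpha$ recovers the absolute monodromy class of $p_2$. Changing the choice of $\tilde b_0$ changes $\alpha$ by a coboundary supported on the points $p_1^{-1}(b)$.

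\textbf{Step 1 (lifting $F$ at all).} The map $F$ lifts to $\widetilde{S}$ if and only if $F^\ast$ fixes the absolute class, by \Cref{lemma:abelian-lifting} applied to $p_2$. Since $F^\ast\alpha = \alpha$ implies that the restrictions to absolute cohomology agree, invariance of $\alpha$ gives existence of a lift.

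\textbf{Step 2 (which lift fixes $\partial\widetilde S$).} For the forward direction, suppose $f \in \Mod_{p_1\circ p_2}$ with lift $\widetilde F$ fixing $\partial\widetilde S$ pointwise. Then $\widetilde F$ covers a lift of $f$ to $S$ that fixes $\partial S$, and by uniqueness this lift is $F$. Now $\widetilde F$ fixes every chosen point $\tilde s$ and sends lifts of paths to lifts of the image paths. Hence $\alpha(F_\ast c) = \alpha(c)$ for every relative path $c$, i.e.\ $F^\ast\alpha = \alpha$.

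For the converse, suppose $F^\ast\alpha = \alpha$ and let $\widetilde F$ be the lift of $F$ with $\widetilde F(\tilde b_0) = \tilde b_0$. For any other chosen point $\tilde s$, join $p_2(\tilde b_0)$ to $s$ by a path $c$. Invariance of the relative class shows that $\widetilde F$ carries the lift of $c$ to the lift of $F\circ c$, and both end at $\tilde s$. So $\widetilde F$ fixes every point of $(p_1\circ p_2)^{-1}(b)$. Each component of $\partial\widetilde S$ contains such a point, and $F$ is the identity on $\partial S$. Therefore $\widetilde F$ restricted to a boundary component is a lift of the identity fixing a point, hence is the identity. Compatibility with isotopy rel boundary follows as in \Cref{lemma:abelian-lifting}.

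\textbf{Step 3 (one boundary component).} If $\partial S$ is connected, then every point of $p_1^{-1}(b)$ lies on the single circle $\partial S$. The relative part of $\alpha$ is then determined by arcs along $\partial S$, which $F$ fixes. Equivalently, the map $H^1(S,p_1^{-1}(b);A)\to H^1(S;A)$ is injective on the relevant orbit, because $F$ acts trivially on the image of $H^0(p_1^{-1}(b))$ coming from boundary arcs. So the absolute class suffices.

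The main obstacle is Step 2. We must check carefully that $F$ fixes the finite set $p_1^{-1}(b)$ pointwise, which holds because $F$ is the identity on $\partial S$. We must also check that $F^\ast$ is well defined on relative cohomology, and that the chosen lift's behavior on distinct boundary components is exactly what $\alpha$ detects.
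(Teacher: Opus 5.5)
Your proposal is correct and follows essentially the paper's route. The paper also builds $\alpha\in H^1(S,P;A)$, with $P=p_1^{-1}(b)$, from the monodromy of $p_2$ plus a choice of trivializations of $p_2$ over the points of $P$. It then collapses $P$ to a point, so that $\alpha$ becomes the monodromy of an $A$-cover of $S/P$ and the ordinary lifting criterion applies; your path-lifting argument is the unpacked form of this. It treats the one-boundary case exactly as you do, by taking the connecting arcs along $\partial S$.

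Two small repairs are needed.
\begin{itemize}
\item The points $\tilde s$ over $P$ should simply be chosen independently. Your rule $\tilde s := g\cdot\tilde b_0$ is not well defined: $p_1\circ p_2$ need not be Galois, and even when it is, $g$ is only determined modulo $A$.
\item Your argument shows only that $\widetilde F$ fixes the chosen points, and not every component of $\partial\widetilde S$ need contain one. To get all of $(p_1\circ p_2)^{-1}(b)$ fixed, note that $\widetilde F$ commutes with $A$. This holds because $F^\ast$ fixes the absolute class and $A$ is abelian.
\end{itemize}
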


\begin{proof}
	Let $P = p_1^{-1}(b)$ and fix a basepoint $\widetilde{b}_0 \in P$ as well as arcs $\ell_i$ from $\widetilde{b}_0$ to $\widetilde{b}_i \in P$ for each point in the fiber. Further, fix trivializations\footnote{The constructed class itself is not canonical, and in fact depends on these trivializations as well as the $\ell_i$.} of $p_2$ over each $\widetilde{b}_i$. We construct the cohomology class $\alpha \in H^1(S,p_1^{-1}(b); A)$ as a map out of $H_1(S, P; \ZZ)$ using the long exact sequence of a pair,
	\begin{align}
		0 \to H_1(S; \ZZ) \to H_1(S, P; \ZZ) \to \widetilde{H}_0(P;\ZZ) \to 0,\label{eq:h1-sequence}
	\end{align}
	which carries the additional structure of a sequence of $\Mod(S,P)$-representations. The restriction of $\alpha \in H^1(S,P;A) \cong \Hom(H_1(S,P;\ZZ), A)$ to $H_1(S; \ZZ)$ will be given by the monodromy representation of $\widetilde{S}$ over $S$. To extend to all of $H_1(S,P;\ZZ)$, we use the $\ell_i$ to provide a linear (i.e., not $\Mod(S,P)$-equivariant) splitting of the sequence \eqref{eq:h1-sequence}, and take the value of $\alpha$ on $[\ell_i] \in H_1(S,P;\ZZ)$ to be $0$.

	To prove the lemma, we first pass to the quotient $S/P$, and note that the trivializations over each $\widetilde{b}_i$ give us a way to identify points in each fiber via $(\widetilde{b}_i, a) \sim (\widetilde{b}_j, a)$ for all $a \in A$ so that $\widetilde{S}$ descends to an $A$-cover $\overline{S}$ over $S/P$. A self-map $F : S \to S$ so that $\rest{F}{P} = \Id$ then descends to a map $\overline{F} : S/P \to S/P$ taking the equivalence class $[P]$ to itself. The cohomology class above is then precisely represented by the monodromy of the cover $\overline{S}$ over $S/P$ with basepoint $[P]$, and so if $F^\ast \alpha = \alpha$ we have that $\overline{F}$ lifts to $\overline{S}$, acting as the identity on the fiber over $[P]$. Using the trivializations, $F$ lifts to $\widetilde{S}$, acting as the identity on the fiber over each $\widetilde{b}_i$. Applying the preceding argument in the case of $[f] \in \Mod_{p_1}$ and its lift $[F] \in \Mod(S)$ implies the result, since each boundary component of $\widetilde{S}$ is a circle with at least one marked point and any map over $B$ which fixes such a point can be isotoped over $B$ to one fixing the entire component. The converse is similar.

	We now must show that the cocycle $\alpha$ can be replaced by $\alpha' \in H^1(S; A)$, i.e. the restriction of $\alpha$ to $H_1(S;\ZZ)$, when $S$ has a single boundary component. To do so, choose each $\ell_i$ to live along the boundary $\partial S$. The choice of $\ell_i$ makes the linear splitting $\widetilde{H}_0(P; \ZZ) \to H_1(S,P; \ZZ)$ above a $\Mod(S, \partial S)$-equivariant splitting, and furthermore the subspace spanned by the $[\ell_i]$ is fixed pointwise. Hence the induced action on $H^1(S, P; A)$ is determined by the action on $H^1(S; A)$, proving the result.
\end{proof}

\begin{remark}
	As one should expect, \Cref{lemma:double-lift} applies (with appropriate modifications) even to the case of branched covers. In particular, if $\widetilde{S} \to S$ is unbranched, then \Cref{lemma:double-lift} is correct without modifications even if $S \to B$ is a branched cover.
\end{remark}

\subsection{Computation of the universal braid group for level 2 congruence covers}\label{subsec:sp-iso}

We now apply \Cref{lemma:abelian-lifting,lemma:double-lift} to the congruence covers $\pi[2]$ and $\pi[3]$ arising from a genus one Lefschetz fibration in order to prove \Cref{thm:symp-iso-intro}. We first prove the theorem when $\Br(\pi[2]) = B_n$ or the number of singular fibers $n$ is odd.

\begin{proof}[Proof of \Cref{thm:symp-iso-intro} when {$\Br(\pi[2]) = B_n$}]
	Let $\pi : M \to D^2$ be a genus one Lefschetz fibration with $n$ singular fibers and suppose that $\Br(\pi[2]) = B_n$. For convenience, let $g = \floor{\frac{n-1}{2}}$. Then for $\gamma_i,\gamma_{i+1}$ simple loops about the $i$-th, $i+1$-th branch points, the monodromies $\phi_{\pi[2]}(\gamma_i), \phi_{\pi[2]}(\gamma_{i+1})$ are transpositions in $\SL_2 \mathbb{F}_2$ by \Cref{prop:mod2-hyper-super}. Since $\sigma_i \in \Br(\pi[2])$, we have that
	\begin{align*}
		\phi_{\pi[2]}(\gamma_i) &= (\sigma_i \cdot \phi_{\pi[2]})(\gamma_i) \\
		\phi_{\pi[2]}(\gamma_i) &= \phi_{\pi[2]}(\gamma_{i+1})
	\end{align*}
	by direct computation. Because the action of $\sigma_i$ on $F_n$ consists of exchanging and conjugating the generators, $\Br(\pi[2]) = B_n$ if and only if $\phi_{\pi[2]}(\gamma_i) = \phi_{\pi[2]}(\gamma_j)$ for all $i,j$, and hence the monodromy lies entirely in a $\ZZ/2\ZZ$-subgroup of $\SL_2 \mathbb{F}_2$ as desired.
\end{proof}

\begin{proof}[Proof of \Cref{thm:symp-iso-intro} when $n$ is odd]
	Let $\pi : M \to D^2$ be a genus one Lefschetz fibration with $n$ singular fibers. For convenience, let $g = \floor{\frac{n-1}{2}}$. Assume that $n$ is odd and that $\Br(\pi[2]) \neq B_n$. Because $n$ is odd, $B_n$ surjects onto $\Sp(H^1(S[2]; \mathbb{F}_3)) \cong \Sp_{2g}(\mathbb{F}_3)$ by work of A'Campo \cite[Theorem 1]{acampo-tresses}. Since $S[2] \to D^2$ is the hyperelliptic cover, every braid lifts to $S[2]$, and hence applying \Cref{lemma:double-lift} shows also that
	\begin{align*}
		\Br(\pi[2]) = \Stab \alpha < B_n,
	\end{align*}
	where $\alpha \in H^1(S[2]; \mathbb{F}_3) \cong \mathbb{F}_3^{2g}$, since $S[2]$ has a single boundary component. The braid group $B_n$ acts through symplectic automorphisms on $H^1(S[2]; \mathbb{F}_3)$ because it acts on $S[2]$ by mapping classes. We can identify the action on $H^1(S[2];\mathbb{F}_3)$ with a representation $B_n \to \Sp_{2g}(\mathbb{F}_3)$. Because $\Br(\pi[2]) \neq B_n$, we know that $\alpha \neq 0$, and so
	\begin{align}
		\Core \Br(\pi[2]) = \bigcap_{\sigma \in B_n} \sigma\Stab \alpha \sigma^{-1} = \bigcap_{\sigma \in B_n} \Stab (\sigma \cdot \alpha) = \ker(B_n \to \Sp_{2g}(\mathbb{F}_3)),\label{eq:ker-calc}
	\end{align}
	where the last equality follows since $\Sp_{2g}(\mathbb{F}_3)$ acts transitively on $\mathbb{F}_3^{2g} \setminus \{0\}$. Equation \eqref{eq:ker-calc} implies the result, since
	\begin{align*}
		B_n/\Core(\Br(\pi[2])) = B_n/\ker(B_n \to \Sp_{2g}(\mathbb{F}_3)) \cong \Sp_{2g}(\mathbb{F}_3)
	\end{align*}
	by surjectivity. Note that the total monodromy is nontrivial, since $\phi_{\pi[2]}(\partial D^2)$ is the product of an odd number of transpositions.
\end{proof}

Before we begin the proof of \Cref{thm:symp-iso-intro} when the number of singular fibers $n$ is even, we recall some facts from the classical theory of Lie groups concerning the parabolic subgroups of $\Sp_{2h} k$ where $k$ is a field of characteristic not equal to $2$. For a standard reference see \cite{simple-groups-lie-type}, particularly Section 8.5. The following proposition is a consequence of the general theory contained therein.
\begin{proposition}[see {\cite[Section 8.5]{simple-groups-lie-type}}]\label{prop:levi}
	Let $(V,\omega)$ be a symplectic vector space over $\mathbb{F}_p$ where $p \neq 2$ is some prime. The stabilizer $(\Sp V)_v < \Sp V$ of $v \in V$ fits into an exact sequence
	\begin{align*}
		1 \to U \to (\Sp V)_v \to \Sp(v^\perp/v) \to 1,
	\end{align*}
	where $U$ is referred to as the unipotent radical. The unipotent radical satisfies the following:
	\begin{enumerate}
		\item $U$ is a central extension of the additive group $v^\perp/v$ by its center $Z(U)$:
	\begin{align}
		1 \to Z(U) \to U \to v^\perp/v \to 1.\label{eq:ext-U}
	\end{align}
		\item Concretely, $Z(U)$ consists of the symplectic transvections
			\begin{align*}
				Z(U) = \{L : a \mapsto a + c\omega(a,v)v \mid c \in \mathbb{F}_p\} \cong \mathbb{F}_p,
			\end{align*}
			and the map $U \to v^\perp/v$ in \eqref{eq:ext-U} is defined by 
			\begin{align*}
				U &\to v^\perp/v \\
				A &\mapsto A(w) - w
			\end{align*}
			for any basis element $w$ of $V/v^\perp$.
		\item The only $\Sp(v^\perp/v)$ conjugation invariant subgroups of $U$ are the trivial group, $Z(U)$, and $U$ itself.
	\end{enumerate}
\end{proposition}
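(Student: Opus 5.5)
We work in coordinates: take a symplectic basis $v = e_1, e_2, \ldots, e_{2h-1}, f = e_{2h}$ with $\omega(v,f) = 1$ and $W \coloneqq \langle v,f\rangle^\perp$, so $V = \langle v\rangle \oplus W \oplus \langle f\rangle$ and $v^\perp = \langle v\rangle \oplus W$.

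\textbf{The exact sequence.} Any $A \in (\Sp V)_v$ preserves $v^\perp$, since $\omega(Av',v) = \omega(Av',Av) = \omega(v',v)$, and it fixes $v$. Hence $A$ induces a symplectic automorphism of $v^\perp/v \cong W$, giving a homomorphism $(\Sp V)_v \to \Sp(v^\perp/v)$. It is surjective because $\Sp(W)$ embeds by acting trivially on $\langle v, f\rangle$. Define $U$ as its kernel.

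\textbf{The structure of $U$.} An element $A \in U$ is determined as follows. We have $Av = v$ and $Aw = w + \lambda(w)v$ for $w \in W$. Write $Af = f + u + cv$ with $u \in W$; this is forced, since $\omega(v,Af) = 1$ gives coefficient $1$ on $f$ and $A \in U$ acts trivially on $v^\perp/v$. The symplectic condition $\omega(Aw,Af) = \omega(w,f) = 0$ gives $\omega(w,u) + \lambda(w) = 0$, so $\lambda$ is determined by $u$, while $c$ is free. Thus $U$ is parametrized by pairs $(u,c) \in W \times \mathbb{F}_p$.

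We then compose two such maps to obtain the group law. The $u$-components add, and the $c$-components add up to a correction term $\pm\omega(u,u')$. This is a Heisenberg group. The map $A \mapsto Af - f \bmod v$, which is the stated map $U \to v^\perp/v$, is a homomorphism with kernel $\{(0,c)\}$. These kernel elements are exactly the transvections $a \mapsto a + c\,\omega(a,v)v$, up to the sign convention for $c$. Since $\omega$ is nondegenerate on $W$, the commutator of $(u,c)$ and $(u',c')$ is $(0, \pm 2\omega(u,u'))$, and $2 \neq 0$. It follows that the center is exactly $\{(0,c)\} \cong \mathbb{F}_p$, which proves (1) and (2).

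\textbf{The invariant subgroups (item 3).} Let $N \le U$ be invariant under conjugation by the Levi factor $\Sp(W)$, embedded as above. Conjugating $(u,c)$ by $g \in \Sp(W)$ gives $(gu,c)$. Suppose $N \not\subseteq Z(U)$, so $N$ contains some $(u,c)$ with $u \neq 0$.

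\begin{itemize}
\item Since $\Sp(W)$ acts transitively on $W \setminus 0$, we get elements $(u',c)$ in $N$ for all nonzero $u'$ with the same $c$.
\item Taking commutators $[(u,c),(u',c')] = (0,\pm 2\omega(u,u'))$ with $\omega(u,u') \neq 0$ shows $Z(U) \subseteq N$.
\item Then $N/Z(U)$ is a nonzero $\Sp(W)$-invariant subgroup of $W$. Since $W$ is irreducible under $\Sp(W)$ (by transitivity on nonzero vectors), $N/Z(U) = W$, so $N = U$.
\end{itemize}

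If instead $N \subseteq Z(U) \cong \mathbb{F}_p$, then $N$ is trivial or $Z(U)$, because $p$ is prime. The degenerate case $h = 1$, where $W = 0$, is immediate since then $U = Z(U)$.

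\textbf{Main obstacle.} The key step is item (3), specifically the use of commutators to force the center into $N$. Everything there depends on $p \neq 2$ and on the nondegeneracy of $\omega$ on $W$.
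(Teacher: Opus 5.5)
Your proof is correct. For item (3) it uses the same mechanism as the paper: the image of the subgroup in $v^\perp/v$ is $0$ or all of $v^\perp/v$ by irreducibility, and lifts of a non-orthogonal pair have commutator $2\omega(u,u')\neq 0$ in $Z(U)\cong\mathbb{F}_p$. The paper phrases this as the nonexistence of a splitting of the central extension, via the explicit computation $[M(x),M(y)](w)=w+2v$.

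Your Heisenberg coordinates $(u,c)$ also verify items (1) and (2), which the paper only cites from Carter. Because your commutator does not depend on the $c$-coordinates, your argument goes through with only cosmetic changes when the subgroup is merely normalized by some subgroup of $(\Sp V)_v$ surjecting onto $\Sp(v^\perp/v)$, rather than by your Levi factor $\Sp(W)$; this is how item (3) is used later in the paper.
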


\begin{proof}
	Let $H$ be a subgroup of $U$ invariant under $(\Sp V)_v$, and let $\phi : U \to v^\perp/v$ be the group homomorphism inducing \eqref{eq:ext-U}. The group $\phi(H) \subseteq v^\perp/v$ is invariant under $(\Sp V)_v$ and hence under $\Sp(v^\perp/v)$. Since $\Sp(v^\perp/v)$ acts irreducibly on $v^\perp/v$, either $\phi(H) = 1$ or $\phi(H) = v^\perp/v$.

	If $\phi(H) = 1$, then $H \subseteq Z(U) \cong \mathbb{F}_p$. Hence $H = 1$ or $H = Z(U)$. Otherwise, $\phi(H) = v^\perp/v$. If $H \cap Z(U) \neq 1$, then $H = U$ and we are done. If $H \cap Z(U) = 1$, then $H$ provides a splitting $\psi : v^\perp/v \to U$ of \eqref{eq:ext-U}. We prove no such splitting can exist.

	Let $x,y,w,v$ form part of a symplectic basis for $V$ where $\omega(x,y) = \omega(v,w) = 1$ and all other pairings vanish. We define $M(x),M(y) \in \Sp V$ by
	\begin{align*}
		M(x) &: x\mapsto x, y \mapsto y+v, w \mapsto w + x, v\mapsto v, \\
		M(y) &: x \mapsto x-v, y \mapsto y,  w \mapsto w + y, v\mapsto v,
	\end{align*}
	where both act as the identity on the remaining vectors of the symplectic basis. A direct check shows that $M(x),M(y) \in U$ and project to $x,y$ respectively. We compute that
	\begin{align*}
		[M(x),M(y)](w) &= M(x)M(y)(M(x))^{-1} \cdot (w-y)  \\
					   &= M(x)M(y)\cdot (w-x - y + v) \\
					   &= M(x) \cdot (w - x + 2v) \\
					   &= w + 2v.
	\end{align*}
	Now since $\psi$ is a group homomorphism and $v^\perp/v$ is abelian we see that $[\psi(x),\psi(y)] = \Id$. But $[\psi(x),\psi(y)]$ is also equal to $[M(x),M(y)]$ since $M(x) = \psi(x)z$ for some $z \in Z(U)$ and $M(y) = \psi(y)z'$ for some $z' \in Z(U)$. Thus we have a contradiction as $2 \neq 0$ in $\mathbb{F}_p$ by assumption.
\end{proof}

Equipped with \Cref{prop:levi}, we prove the surjectivity portion of \Cref{thm:symp-iso-intro} when the number of singular fibers $n$ is even.
\begin{lemma}\label{lemma:surj-symp}
	Suppose $\pi : M \to D^2$ is a genus one fibration with $n$ singular fibers where $n \geq 4$ is even. Furthermore fix some $b \in \partial D^2$ and let $p : S[2] \to D^2$ be the induced hyperelliptic cover as in \Cref{prop:mod2-hyper-super} and \Cref{fig:hyperelliptic}. Then the induced representation
	\begin{align*}
		\rho : B_n \to \Sp H^1(S[2], p^{-1}(b); \mathbb{F}_\ell)
	\end{align*}
	surjects onto the stabilizer of a vector for every prime $\ell \neq 2$.
\end{lemma}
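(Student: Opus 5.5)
Throughout, $n = 2g+2$ is even, so $S[2]$ has genus $g$ and two boundary components, and $P = p^{-1}(b)$ consists of two points $\widetilde{b}_0,\widetilde{b}_1$, one on each boundary circle. I would proceed in four steps.

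\textbf{Step 1: identify the target.} Collapse $P$ to a point, equivalently cap the two boundary circles into a closed surface after gluing. The pairing on $H^1(S[2],P;\mathbb{F}_\ell) \cong H_1(S[2],\partial S[2];\mathbb{F}_\ell)^\vee$ is then a nondegenerate symplectic form on a space $V$ of dimension $2g+2$. I would make this concrete using the arc $\ell_1$ from $\widetilde{b}_0$ to $\widetilde{b}_1$ and the boundary class $[\partial_1] \in H_1(S[2])$. The class $v$ dual to the boundary curve, which is fixed by every lift of a braid, spans the radical of the form restricted to $H^1(S[2])$. Moreover $v^\perp/v$ is identified with the symplectic space $H^1$ of the closed genus $g$ surface obtained by capping. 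Hence $\rho(B_n) \subseteq (\Sp V)_v$, and it remains to prove equality.

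\textbf{Step 2: surject onto the Levi quotient.} Compose $\rho$ with $(\Sp V)_v \to \Sp(v^\perp/v)$. The resulting map is the symplectic representation of $B_n$ through the hyperelliptic cover of the capped surface. This is the integral Burau representation mod $\ell$ for $B_{2g+2} \supseteq B_{2g+1}$. By A'Campo \cite[Theorem 1]{acampo-tresses}, already used in the odd case, the subgroup $B_{2g+1}$ surjects onto $\Sp_{2g}(\mathbb{F}_\ell)$.

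\textbf{Step 3: show the unipotent radical is hit.} Let $K = \rho(B_n) \cap U$. Since $\rho(B_n)$ surjects onto the Levi quotient, $K$ is normalized by a subgroup mapping onto $\Sp(v^\perp/v)$. I would then like to apply \Cref{prop:levi}(3), but that requires $\Sp(v^\perp/v)$-invariance, and conjugation by elements of $U$ acts trivially on $U/Z(U)$ and only moves things within $Z(U)$. So I would first check that the image of $K$ in $v^\perp/v$ is $\Sp$-invariant, hence $0$ or everything. After that, the argument in the proof of \Cref{prop:levi} (commutators of lifts land in $Z(U)$ nontrivially) upgrades a full image to $K = U$. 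It therefore suffices to exhibit one element of $K$ not lying in $Z(U)$.

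\textbf{Step 4: exhibit the element.} Consider the Dehn twist $T_{c_{2g+1}}$, the lift of $\sigma_{2g+1}$, where $c_{2g+1}$ is the curve crossing between the two boundary components as in \Cref{fig:hyperelliptic}. Its action on $V$ is the transvection $a \mapsto a + \omega(a,c)c$, where $c$ is a class whose pairing with the arc class $\ell_1$ is nonzero. Such an element maps nontrivially to the Levi quotient, so I would instead take a suitable product. One candidate is $\sigma_{2g+1}$ times an element of $B_{2g+1}$ inducing the same Levi image; a cleaner candidate is a commutator $[\sigma_{2g+1}, h]$ with $h \in B_{2g+1}$ centralizing the Levi image of $\sigma_{2g+1}$. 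Either should give a nontrivial element of $U$ whose projection $A(w) - w$ is nonzero in $v^\perp/v$. A single explicit matrix computation in the basis $x_i,y_i,w,v$ then completes the argument.

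I expect Step 4, together with correctly identifying $w$ with the arc class and $v$ with the boundary class, to be the main obstacle. The rest is bookkeeping against \Cref{prop:levi} and A'Campo's theorem.
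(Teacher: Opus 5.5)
Your outline follows the paper's proof step for step. The image of $\rho$ lies in the stabilizer of the boundary class ($y_{g+1}$ in the paper, your $v$), and A'Campo gives surjectivity onto $\Sp(v^\perp/v)$. The problem then reduces to finding a non-central element of $\im\rho\cap U$. Your first candidate in Step 4 is exactly the paper's element $f=\sigma\sigma_{n-1}$, where $\sigma\in B_{n-1}$ acts on $W=\langle x_1,\dots,y_g\rangle$ by the \emph{inverse} of the Levi image of $\sigma_{n-1}$; your phrase ``inducing the same Levi image'' should be adjusted accordingly. Your more careful Step 3 is correct: only $\im\rho$ normalizes $K$, so you first get $\Sp(v^\perp/v)$-invariance of the image of $K$ in $v^\perp/v$ and then use the commutator argument. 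That is in fact all the proof of \Cref{prop:levi} uses.

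The gap is that Step 4, the one step with content beyond citations, is left as ``should give''. The fact that makes it work is never stated: $\rho(B_{n-1})$ fixes both $y_{g+1}$ and the arc class $x_{g+1}$ (your $w$). The reason is geometric. Lifts of braids on the first $n-1$ strands are supported in the preimage of a subdisk containing those points, which is a genus-$g$ surface with one boundary circle. Its complement in $S[2]$ is a pair of pants containing both boundary circles, and an arc from $\widetilde b_0$ to $\widetilde b_1$ can be drawn inside it.

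Granting this, $\rho(\sigma_{n-1})$ moves $x_{g+1}$ by $\pm(y_g-y_{g+1})$. Then for any $h\in B_{n-1}$ cancelling the Levi image of $\sigma_{n-1}$, the product $\rho(h)\rho(\sigma_{n-1})\in U$ moves $x_{g+1}$ by $\pm(\rho(h)y_g-y_{g+1})$. This is nonzero modulo $y_{g+1}$, so the element is non-central; this is the paper's computation.

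Your ``cleaner'' commutator candidate fails as stated. An element $h\in B_{n-1}$ centralizes the Levi image of $\sigma_{n-1}$ iff $\rho(h)y_g=\pm y_g$. If the sign is $+$, then $\rho(h)$ fixes $[c_{n-1}]$ and the commutator is trivial. For instance $h=\sigma_j$ with $j\le n-3$ already commutes with $\sigma_{n-1}$ in $B_n$. Only when $\rho(h)y_g=-y_g$ do you get displacement $\pm 2y_g$, which is nonzero precisely because $\ell\neq 2$.

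Finally, in Step 1 the identification $H^1(S[2],P)\cong H_1(S[2],\partial S[2])^\vee$ is wrong, since the right side has dimension $2g+1$. The correct space is $V=H_1(S[2],P)^\vee$. Its symplectic form comes, for example, from attaching a $1$-handle joining the two boundary circles at $P$. With that form, $v^\perp$ corresponds to $H_1(S[2])\subset H_1(S[2],P)$, and $v^\perp/v$ to the capped surface.
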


\begin{figure}[t!]
	\centering
	\includegraphics{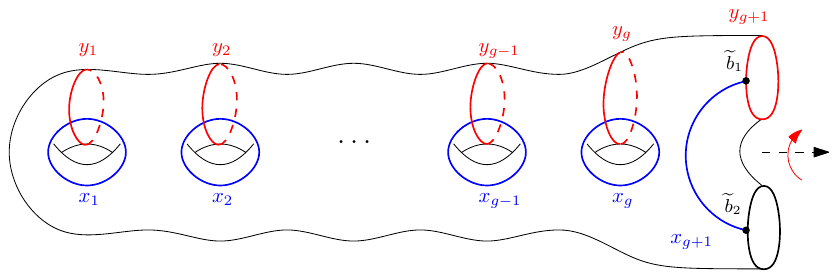}
	\caption{A symplectic basis for $H^1(S[2], \{\widetilde{b}_1,\widetilde{b}_2\})$ when $n$ is even.}
	\label{fig:symp-basis}
\end{figure}

\begin{proof}
	For convenience, let $V = H^1(S[2],p^{-1}(b); \mathbb{F}_\ell)$. Choose a basis $x_1,y_1,\ldots,x_{g+1},y_{g+1}$ where $g = \frac{n-2}{2}$ for $V$ as given in \Cref{fig:symp-basis}. The half-twist generators of $B_n$ lift to explicit Dehn twists $T_{c_i}$ as depicted in \Cref{fig:hyperelliptic}. Hence, the image of $B_n$ under the representation above lies in the stabilizer $(\Sp V)_{y_{g+1}}$.

	By work of A'Campo \cite[Theorem 1]{acampo-tresses}, the composition
	\begin{align*}
		B_n \to (\Sp V)_{y_{g+1}} \to \Sp(y_{g+1}^\perp/y_{g+1})
	\end{align*}
	is surjective. Hence, it suffices to show that $U = \im \rho \cap U$, where $U$ is the unipotent radical referenced in \Cref{prop:levi}. 
	
	Because $B_n$ surjects onto $\Sp(y_{g+1}^\perp/y_{g+1})$, the subgroup $\im \rho \cap U$ is invariant under the action of $\Sp(y_{g+1}^\perp/y_{g+1})$ coming from the exact sequence 
	\begin{align}
		1 \to U \to (\Sp V)_{y_{g+1}} \to \Sp(y_{g+1}^\perp/y_{g+1}) \to 1. \label{eq:sp-stab-sequence}
	\end{align}
	As proved in \Cref{prop:levi}, the only such invariant subgroups are $1,Z(U)$, and $U$ itself. Hence, to prove surjectivity onto $U$ it suffices to find an element of $\im \rho \cap U$ which is non-central. The braid $\sigma_{n-1}$ acts by Dehn twist about the curve $c_{n-1}$ in \Cref{fig:hyperelliptic} and acts on $H^1$ by
	\begin{align*}
		x_{g+1} &\mapsto x_{g+1} - c_{n-1} = x_{g+1} - y_{g+1} + y_g \\
		x_g &\mapsto x_g + y_{g+1} - y_g,
	\end{align*}
	and by the identity in the other basis vectors. Let $W$ denote the span of $x_1,y_1,\ldots,x_g,y_g$. Since $n$ is even, the braid group $B_{n-1}$ supported on the first $n-1$ strands surjects onto $\Sp(W)$ by A'Campo's work. Therefore, there is some braid $\sigma \in B_{n-1}$ which acts on $W$ by
	\begin{align*}
		x_i &\mapsto x_i \tag*{$\forall 1 \leq i \leq g-1$} \\
		y_i &\mapsto y_i \tag*{$\forall 1 \leq i \leq g-1$} \\
		x_g &\mapsto x_g + y_g \\
		y_g &\mapsto y_g
	\end{align*}
	Thus we obtain a braid $f = \sigma \sigma_{n-1}$ so that $\rho(f)$ acts by
	\begin{align*}
		x_{g+1} &\xmapsto{\rho(f)} x_{g+1} - y_{g+1} + y_g \\
		x_g &\xmapsto{\rho(f)} x_g + y_{g+1} 
	\end{align*}
	while fixing the other basis vectors. Since $x_1,y_1,\ldots,x_g,y_g$ project to a basis for $y_{g+1}^\perp/y_{g+1}$, we find that $\rho(f) \in U$. Furthermore, since 
	\[
		\rho(f) \cdot x_{g+1} -x_{g+1} = y_g - y_{g+1}
	\]
	is nontrivial in $y_{g+1}^\perp/y_{g+1}$, we see that $\rho(f) \not\in Z(U)$ by \Cref{prop:levi}.

	Thus $\rho(f)$ lies in $\im \rho \cap U$ and is non-central, as claimed, and so $\im \rho \cap U = U$ as desired. By the exact sequence \eqref{eq:sp-stab-sequence}, $\rho$ surjects onto $(\Sp V)_{y_{g+1}}$ as desired.
\end{proof}

We now are left to prove the injectivity portion of \Cref{thm:symp-iso-intro}. We start with a lemma which applies regardless of the boundary monodromy.
\begin{lemma}\label{lemma:injective-symp}
	Suppose $\pi : M \to D^2$ is a genus one fibration with $n$ singular fibers where $n \geq 4$ is even such that $\Br(\pi[2]) \neq B_n$. Furthermore fix some $b \in \partial D^2$ and let $p : S[2] \to D^2$ be the induced hyperelliptic cover as in \Cref{prop:mod2-hyper-super} and \Cref{fig:hyperelliptic}. Let $\rho$ denote the induced representation
	\begin{align*}
		\rho : B_n \to \Sp H^1(S[2], p^{-1}(b); \mathbb{F}_3),
	\end{align*}
	then $\rho(\Core(\Br(\pi[2]))) \subseteq Z(U)$, where $U$ is the unipotent radical of \Cref{prop:levi}.
\end{lemma}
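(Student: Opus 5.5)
We use \Cref{lemma:double-lift} to describe $\Br(\pi[2])$ as the stabilizer of a class, and then pass to the normal core. The cover $S[2] \to D^2$ is the hyperelliptic cover, so every braid lifts to $S[2]$. The cover $\Sigma_\pi[2] \to S[2]$ is an unbranched $\ZZ/3\ZZ$-cover by \Cref{prop:mod2-hyper-super}. Since $n$ is even, $S[2]$ has two boundary components, so we must use the relative version. \Cref{lemma:double-lift} gives a class $\alpha \in V \coloneqq H^1(S[2],p^{-1}(b);\mathbb{F}_3)$ with $\Br(\pi[2]) = \Stab_{B_n}\alpha$, where $B_n$ acts through $\rho$. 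Because $\Br(\pi[2]) \neq B_n$, we have $\alpha \neq 0$. As in \eqref{eq:ker-calc},
\begin{align*}
	\Core(\Br(\pi[2])) = \bigcap_{\sigma \in B_n} \Stab(\sigma\cdot\alpha),
\end{align*}
so $\rho(\Core(\Br(\pi[2])))$ fixes every vector in the orbit $\im\rho \cdot \alpha$, and hence fixes their span pointwise.

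Next we identify this orbit using \Cref{lemma:surj-symp}: $\im\rho = (\Sp V)_{y_{g+1}}$. We split into cases according to where $\alpha$ lies.

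If $\alpha \notin y_{g+1}^\perp$, then the orbit of $\alpha$ under $(\Sp V)_{y_{g+1}}$ spans all of $V$. One sees this by Witt-type transitivity: the stabilizer acts transitively on vectors $w$ with a fixed value of $\omega(w,y_{g+1}) \neq 0$, and these span $V$. In this case $\rho(\Core)$ is trivial, which is certainly contained in $Z(U)$.

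If $\alpha \in y_{g+1}^\perp \setminus \langle y_{g+1}\rangle$, then the image of $\alpha$ in $y_{g+1}^\perp/y_{g+1}$ is nonzero. Since $\Sp(y_{g+1}^\perp/y_{g+1})$ acts transitively on nonzero vectors and $U$ translates by multiples of $y_{g+1}$, the orbit spans $y_{g+1}^\perp$. In this case $\rho(\Core)$ fixes $y_{g+1}^\perp$ pointwise. An element $A \in \Sp V$ fixing $y_{g+1}^\perp$ pointwise satisfies $A(w) - w \in (y_{g+1}^\perp)^\perp = \langle y_{g+1}\rangle$ for every $w$. Symplecticity then forces $A$ to be a transvection $a \mapsto a + c\,\omega(a,y_{g+1})y_{g+1}$, that is, $A \in Z(U)$ by \Cref{prop:levi}.

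The remaining case is $\alpha \in \langle y_{g+1}\rangle$. Then $\alpha$ is fixed by all of $\im\rho$, so $\Br(\pi[2]) = B_n$, contradicting the hypothesis. To make this airtight we would check that $\Stab\alpha = B_n$ exactly in this case, which follows directly since $\im\rho$ fixes $y_{g+1}$.

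The main obstacle is the first step: matching the class $\alpha$ produced by \Cref{lemma:double-lift} with the action $\rho$ on $V$. In particular we must check that the $B_n$-action on relative cohomology used in \Cref{lemma:double-lift} agrees with the one in \Cref{lemma:surj-symp}, whose image stabilizes $y_{g+1}$, the class dual to the arc between the two basepoints. Once that identification is in place, the argument is the linear-algebra orbit computation above. We expect the subsequent proof of case (2) versus case (3) of \Cref{thm:symp-iso-intro} to correspond exactly to which of the two nontrivial cases occurs, $\alpha \notin y_{g+1}^\perp$ or $\alpha \in y_{g+1}^\perp$, determined by the boundary monodromy.
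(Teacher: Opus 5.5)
Your proof is correct, but it takes a different route from the paper's.

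\textbf{The paper's route.} The paper works through the Levi decomposition of \Cref{prop:levi}:
\begin{itemize}
\item It uses only that $B_n$ surjects onto the quotient $\Sp(y_{g+1}^\perp/y_{g+1})$ (A'Campo). From this it shows that $\rho(\Core(\Br(\pi[2])))$ acts trivially on $y_{g+1}^\perp/y_{g+1}$, so it lies in $U$.
\item This image is normalized by $\im\rho$, so part (3) of \Cref{prop:levi} leaves only $1$, $Z(U)$ or $U$.
\item The case $U$ is ruled out by exhibiting explicit elements $M(x_i), M(y_i) \in U$ that move $\alpha$.
\end{itemize}
For the last step the paper writes $\alpha = a_{g+1}x_{g+1} + \sum_{i\le g}(a_ix_i + b_iy_i)$ with some $a_i$ or $b_i$ nonzero. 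It extracts this from the construction in \Cref{lemma:double-lift} together with a transposition argument.

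\textbf{Your route.} You bound $\rho(\Core(\Br(\pi[2])))$ by the pointwise stabilizer of the span of the orbit $\im\rho\cdot\alpha$. You then compute that span by Witt transitivity of $(\Sp V)_{y_{g+1}}$ on the level sets of $\omega(\cdot,y_{g+1})$.
\begin{itemize}
\item This costs the full surjectivity of \Cref{lemma:surj-symp}, at least in the case $\omega(\alpha,y_{g+1})\neq 0$. In the case $\alpha\in y_{g+1}^\perp$, surjectivity onto the Levi quotient suffices, plus the fact that $\im\rho$ fixes $y_{g+1}$.
\item In return you need neither part (3) of \Cref{prop:levi} at this stage nor any coordinates for $\alpha$. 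The only nondegeneracy you need, $\alpha\notin\langle y_{g+1}\rangle$, follows formally from $\Br(\pi[2])\neq B_n$.
\item Your case split gives $\rho(\Core(\Br(\pi[2])))=1$ outright when $\omega(\alpha,y_{g+1})\neq 0$. The paper only gets this later, in the proof of the theorem.
\item You treat that case more cleanly than the paper. The paper's class ``$[\alpha]\neq 0$ in $y_{g+1}^\perp/y_{g+1}$'' only makes literal sense when $a_{g+1}=0$.
\end{itemize}

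\textbf{What the paper's coordinates buy.} They are reused immediately afterward. They identify $a_{g+1}$ with the boundary monodromy, and they show that $Z(U)$ is actually attained in case (3).

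\textbf{Your ``main obstacle.''} The identification you flag is harmless. The action in \Cref{lemma:double-lift} and the representation $\rho$ are both induced by the unique lifts fixing $\partial S[2]$, namely the twists $T_{c_i}$, so they agree. The paper takes this for granted.
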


\begin{proof}
	Let $V \coloneqq H^1(S[2],p^{-1}(b); \mathbb{F}_3)$ for convenience, with the symplectic form represented by $\omega$. As before choose the symplectic basis $x_1,y_1,\ldots,x_{g+1},y_{g+1}$ for $V$ given in \Cref{fig:symp-basis} where $g = \frac{n-2}{2}$. By \Cref{lemma:double-lift}, $\Br(\pi[2]) = \Stab\, [\alpha]$ for some $[\alpha] \in H^1(S[2],p^{-1}(b); \mathbb{F}_3)$. Furthermore, by the construction of $\alpha$ presented in \Cref{lemma:double-lift}, $\alpha$ must have some nontrivial $x_i$ or $y_i$ component for $1 \leq i \leq g$. Otherwise, the monodromy about each puncture of $D^2$ would map to the same transposition in $\SL_2 \mathbb{F}_2 \cong S_3$, and so every simple braid would lift. However, by assumption $\Br(\pi[2]) \neq B_n$.

	Furthermore, the construction of \Cref{lemma:double-lift} allows us to take $\alpha$ to have trivial pairing with $x_{g+1}$. Furthermore if \[
		\phi_{\pi[2]} : \pi_1(D^2 \setminus n \text{ points}) \to \SL_2 \mathbb{F}_2
	\]
	is the monodromy of $\pi[2]$, then the total monodromy $\phi_{\pi[2]}(\partial D^2)$ is a 3-cycle in $\SL_2 \mathbb{F}_2 \cong S_3$. The element of $\ZZ/3\ZZ$ induced by a choice of embedding $\ZZ/3\ZZ \to S_3$ can be identified with $\omega(\alpha,y_{g+1})$. Therefore,
	\begin{align*}
		\alpha = a_{g+1}x_{g+1} + \sum_{i=1}^g (a_ix_i + b_iy_i),
	\end{align*}
	where $a_{g+1} = 0$ if and only if the total monodromy $\phi_{\pi[2]}(\partial D^2)$ is trivial, and at least one of $a_i, b_i \neq 0$ for some $1 \leq i \leq g$.

	By the above notes $[\alpha] \neq 0$ in $y_{g+1}^\perp/y_{g+1}$, since one of $a_i,b_i \neq 0$. Because $B_n$ surjects onto $\Sp(y_{g+1}^\perp/y_{g+1})$ and 
	\begin{align*}
		\rho(\Core(\Br(\pi[2]))) \subseteq \Stab\, [\sigma \cdot \alpha]
	\end{align*}
	for all $\sigma \in B_n$, we must then have that $\rho(\Core(\Br(\pi[2])))$ acts trivially on $y_{g+1}^\perp/y_{g+1}$. Namely, $\Sp(y_{g+1}^\perp/y_{g+1})$ acts transitively on nonzero vectors \cite[Proposition 3.2]{classical-groups-grove}. Hence $\rho(\Core(\Br(\pi[2]))) \subseteq U$.

	As before $\rho(\Core(\Br(\pi[2])))$ is $\Sp(y_{g+1}^\perp/y_{g+1})$ invariant, and so it suffices by \Cref{prop:levi} to show it cannot be all of $U$. Consider the map $M(w)$ defined for $w$ in the span of $x_1,y_1,\ldots,x_g,y_g$ by
	\begin{align*}
		M(w) : x_i &\mapsto x_i + \omega(x_i,w)y_{g+1} \tag{$1 \leq i \leq g$} \\
		x_{g+1} &\mapsto x_{g+1} + w  \\
		y_i &\mapsto y_i + \omega(y_i,w)y_{g+1} \tag{$1 \leq i \leq g$} \\
		y_{g+1} &\mapsto y_{g+1},
	\end{align*}
	where $\omega$ is the symplectic pairing. Direct computation shows that $M(w) \in U$ for all $w$ in the span of $x_1,y_1,\ldots,x_g,y_g$. Furthermore,
	\begin{align*}
		M(x_i)(\alpha) &= \alpha + a_{g+1}x_i - b_iy_{g+1}, \\
		M(y_i)(\alpha) &= \alpha + a_{g+1}y_i + a_iy_{g+1}.
	\end{align*}
	Since one of $a_i,b_i \neq 0$, at least one of these does not stabilize $\alpha$ and hence does not lie in $\rho(\Core(\Br(\pi[2])))$, which implies that $\rho(\Core(\Br(\pi[2]))) \subseteq Z(U)$.
\end{proof}

Using \Cref{lemma:surj-symp} and \Cref{lemma:injective-symp}, we finish the proof of \Cref{thm:symp-iso-intro}.

\begin{proof}[Proof of {\Cref{thm:symp-iso-intro}} when $n$ is even and {$\Br(\pi[2]) \neq B_n$}]
	Suppose $\pi : M \to D^2$ is a genus one fibration with $n$ singular fibers where $n$ is even and $\Br(\pi[2]) \neq B_n$. When $n = 2$ the result is immediate: $B_2 \cong \ZZ$, the total monodromy $\phi_{\pi[2]}(\partial D^2)$ is a product of two distinct transpositions and hence a $3$-cycle, and $(\Sp_2 \mathbb{F}_3)_{y_1} \cong \ZZ/3\ZZ$. We therefore assume $n \geq 4$. Furthermore fix some $b \in \partial D^2$ and let $p : S[2] \to D^2$ be the induced hyperelliptic cover as in \Cref{prop:mod2-hyper-super} and \Cref{fig:hyperelliptic}. Let $\rho$ denote the induced representation
	\begin{align*}
		\rho : B_n \to \Sp H^1(S[2], p^{-1}(b); \mathbb{F}_3)
	\end{align*}
	and let 
	\[
		\phi_{\pi[2]} : \pi_1(D^2 \setminus n \text{ points}) \to \SL_2 \mathbb{F}_2
	\]
	denote the monodromy of $\pi[2] : \Sigma_\pi[2] \to D^2$. As before let $V = H^1(S[2], p^{-1}(b); \mathbb{F}_3)$. By \Cref{lemma:surj-symp}
	\begin{align*}
		B_n/\ker(\rho) \cong \im \rho = (\Sp V)_{y_{g+1}}
	\end{align*}
	where $y_{g+1}$ is depicted in \Cref{fig:symp-basis} as part of a symplectic basis $x_1,y_1,\ldots,x_{g+1},y_{g+1}$ for $V$. Furthermore, there is some vector $\alpha \in V$ so that $\Br(\pi[2]) = \Stab\, [\alpha]$ and
	\begin{align*}
		\alpha = a_{g+1}x_{g+1} + \sum_{i=1}^g (a_ix_i + b_iy_i),
	\end{align*}
	in the basis $x_1,y_1,\ldots,x_{g+1},y_{g+1}$ given in \Cref{fig:symp-basis}. As discussed in the proof of \Cref{lemma:surj-symp}, $a_{g+1} = 0$ if and only if the total monodromy $\phi_{\pi[2]}(\partial D^2)$ is trivial.

	By \Cref{lemma:injective-symp}, we know that $\rho(\Core(\Br(\pi[2]))) \subseteq Z(U)$, where
	\begin{align*}
		1 \to U \to (\Sp V)_{y_{g+1}} \to \Sp y_{g+1}^\perp/y_{g+1} \to 1
	\end{align*}
	is the Levi decomposition. By \Cref{prop:levi} the image $\rho(\Core(\Br(\pi[2])))$ is $Z(U)$ or trivial. To prove \Cref{thm:symp-iso-intro}, it suffices to determine which group occurs depending on the value of $a_{g+1}$. 

	Note that the element of $Z(U) \cong \mathbb{F}_3$ associated to $\lambda \in \mathbb{F}_3$ acts on $\alpha$ by
	\[
		\alpha \mapsto \alpha + a_{g+1}\lambda y_{g+1}.
	\] 
	Therefore if $a_{g+1} \neq 0$ no nontrivial element of $Z(U)$ can belong to $\rho(\Br(\pi[2]))$, and so $\rho(\Core(\Br(\pi[2]))) = 1$. In other words, if the monodromy $\phi_{\pi[2]}(\partial D^2)$ is nontrivial, then $\ker \rho = \Core(\Br(\pi[2]))$ and so
	\begin{align*}
		B_n/\Core(\Br(\pi[2])) = (\Sp V)_{y_{g+1}},
	\end{align*}
	as claimed in \Cref{thm:symp-iso-intro}.

	Finally, suppose that $a_{g+1} = 0$. Since $B_n$ surjects onto $(\Sp V)_{y_{g+1}}$ by \Cref{lemma:surj-symp}, there is some $\sigma \in B_n$ mapping to the central shear 
	\[
		x_{g+1} \mapsto x_{g+1} + y_{g+1}
	\] 
	in $Z(U)$. We claim that $\sigma$ lies in $\Core(\Br(\pi[2]))$. We compute that $\rho(\sigma)(\alpha) = \alpha$ since $a_{g+1} = 0$, and so $\sigma \in \Br(\pi[2])$. Furthermore, $\rho(\sigma)$ lies in the center of $(\Sp V)_{y_{g+1}}$, and hence $\rho(\sigma'\sigma\sigma'^{-1}) = \rho(\sigma)$ for each $\sigma' \in B_n$. Thus the conjugates also fix $\alpha$, and so lie in $\Br(\pi[2])$. As a consequence $\rho(\Core(\Br(\pi[2]))) = Z(U)$ and $\ker \rho \subseteq \Core(\Br(\pi[2]))$ which implies
	\begin{align*}
		B_n/\Core(\Br(\pi[2])) \cong \im \rho/\rho(\Core(\Br(\pi[2]))) = (\Sp V)_{y_{g+1}}/Z(U)
	\end{align*}
	as claimed in \Cref{thm:symp-iso-intro}. With the above calculations, \Cref{thm:symp-iso-intro} is proved. The connection to the integral Burau representation follows from the fact that the action of $B_n$ on the hyperelliptic cover is precisely the integral Burau representation (see \cite{brendle-margalit-putman-hyperelliptic-torelli}).
\end{proof}

\begin{remark}\label{rem:potential-mod-3}
	Using \Cref{lemma:double-lift}, a similar computation should be possible for $\Core(\Br(\pi[3]))$. In this case, the presence of superelliptic covers would imply a connection to the Burau representation evaluated at $t$ a cube root of unity $\omega$. In particular, the key technical input would be the calculation of the image
	\begin{align*}
		B_n \to \GL_{n-1}(\ZZ[t,t^{-1}]) \to \GL_{n-1}(\ZZ[\omega]) \to \GL_{n-1}(\ZZ[\omega]/(2)) = \GL_{n-1}(\mathbb{F}_4).
	\end{align*}
	As shown by Squier, the representation above preserves an explicit hermitian form \cite{squier-unitary}.
\end{remark}
\Cref{thm:symp-iso-intro} partially justifies an analogy between $\Br(\pi)$ and the Torelli group, as $\Core(\Br(\pi[2]))$ is the kernel of a particular linear representation. One may take the analogy even further. Let $\pi : M \to D^2$ be a genus one fibration with nodal fibers. Repeated applications of \Cref{lemma:abelian-lifting} to the system of branched covers $\pi[2^a]$ show that $\Br(\pi) = \bigcap_a \Br(\pi[2^a])$ is determined by a sequence of cohomological representations of $\Br(\pi[2^{a-1}])$ over finite fields. \textit{A posteriori}, the simple fact that $\SL_2\ZZ$ is residually solvable implies the existence of such a linear interpretation of $\Br(\pi)$ when the base is a disk. We end by asking the following question.
\begin{question}\label{q:torelli-like}
	Let $\pi : M \to S^2$ be a genus one elliptic fibration. Is $\Br(\pi)$ equal to $\bigcap_N \Br(\pi[N])$?
\end{question}
\Cref{q:torelli-like} is number-theoretic in nature. Indeed, it is equivalent to a local-global principle for the conjugacy problem of representations $\pi_1(S^2 \setminus \Delta) \to \SL_2\ZZ$ satisfying a strong parabolicity condition---that each simple closed curve about a puncture is mapped to a conjugate of $\begin{psmallmatrix} 1 & - 1 \\ 0 & 1 \end{psmallmatrix}$. Dropping this condition and instead requiring that each generator simply has trace $\pm 2$, one can easily construct counterexamples.

\printbibliography

\end{document}